\documentclass{article}
\usepackage[utf8]{inputenc}

\usepackage{amsmath,amssymb,amsfonts,amscd, graphicx, latexsym, verbatim, multirow, color}
\usepackage{amsthm}
\usepackage{tikz}

\usetikzlibrary{shapes.geometric}

\usepackage{forest}
\usepackage{float}
\usepackage{graphicx}
\usepackage{amsmath,amssymb,amsfonts,amscd, graphicx, latexsym, verbatim, multirow, color, float, enumitem}
\usepackage{caption, subcaption}
\usepackage{mathtools} 
\usepackage{pgf, tikz}
\usetikzlibrary{patterns}
\usetikzlibrary{decorations.shapes}
\hoffset -1.5cm
\voffset -1cm
\textwidth 15.5truecm
\textheight 22.5truecm
\newtheorem{theorem}{Theorem}[section]
\newtheorem{proposition}[theorem]{Proposition}

\theoremstyle{definition}
\newtheorem{definition}[theorem]{Definition}
\newtheorem{example}[theorem]{Example}

\newtheorem{remark}[theorem]{Remark}

\makeatletter
\@addtoreset{equation}{section}
\makeatother


\newcommand{\NN}{{\mathbb N}}

\newcommand{\ZZ}{{\mathbb Z}}

\newcommand{\RR}{{\mathbb R}}

\newcommand{\cF}{{\mathcal F}}

\newcommand{\cI}{{\mathcal I}}
\newcommand{\cJ}{{\mathcal J}}

\newcommand{\cP}{{\mathcal P}}

\newcommand{\bb}{{\partial\mathrm{supp\ }}}

\newcommand{\sss}{{\mathrm{supp\ }}}

\usepackage{etoolbox}
\makeatletter
\patchcmd{\@maketitle}
  {\ifx\@empty\@dedicatory}
  {\ifx\@empty\@date \else {\vskip3ex \centering\footnotesize\@date\par\vskip1ex}\fi
   \ifx\@empty\@dedicatory}
  {}{}
\patchcmd{\@adminfootnotes}
  {\ifx\@empty\@date\else \@footnotetext{\@setdate}\fi}
  {}{}{}
\makeatother


\newcommand{\pentla}[4]{
\begin{scope}[xshift=#1cm, yshift=#2cm, rotate=#3, scale=#4]
\draw[] (0,0) -- ({sin(18)}, {sin(72)}) -- ({(-1+sqrt(5))/2}, 0) -- (0,0);
\filldraw (0.125,0.1) circle (1pt);
\end{scope}}

\newcommand{\pentlb}[4]{
\begin{scope}[xshift=#1cm, yshift=#2cm, rotate=#3, scale=#4]
\draw[] (0,0) -- ({sin(18)}, {sin(72)}) -- ({(-1+sqrt(5))/2}, 0) -- (0,0);
\filldraw (0.125,0.1) circle (1pt);
\filldraw (0.25,0.1) circle (1pt);
\end{scope}}

\newcommand{\pentlc}[4]{
\begin{scope}[xshift=#1cm, yshift=#2cm, rotate=#3, scale=#4]
\filldraw[color=gray!40] (0,0) -- ({sin(18)}, {sin(72)}) -- ({(-1+sqrt(5))/2}, 0) -- (0,0);
\draw[] (0,0) -- ({sin(18)}, {sin(72)}) -- ({(-1+sqrt(5))/2}, 0) -- (0,0);
\filldraw (0.125,0.1) circle (1pt);
\end{scope}}
\newcommand{\pentld}[4]{
\begin{scope}[xshift=#1cm, yshift=#2cm, rotate=#3, scale=#4]
\filldraw[color=gray!40][thick] (0,0) -- ({sin(18)}, {sin(72)}) -- ({(-1+sqrt(5))/2}, 0) -- (0,0);
\draw[] (0,0) -- ({sin(18)}, {sin(72)}) -- ({(-1+sqrt(5))/2}, 0) -- (0,0);
\filldraw (0.125,0.1) circle (1pt);
\filldraw (0.25,0.1) circle (1pt);
\end{scope}}

\newcommand{\pentra}[4]{
\begin{scope}[xshift=#1cm, yshift=#2cm, rotate=#3, scale=#4]
\draw[] (0,0) -- (-{sin(18)}, {sin(72)}) -- ({(1-sqrt(5))/2}, 0) -- (0,0);
\filldraw (-0.125,0.1) circle (1pt);
\end{scope}}

\newcommand{\pentrb}[4]{
\begin{scope}[xshift=#1cm, yshift=#2cm, rotate=#3, scale=#4]
\draw[] (0,0) -- (-{sin(18)}, {sin(72)}) -- ({(1-sqrt(5))/2}, 0) -- (0,0);
\filldraw (-0.125,0.1) circle (1pt);
\filldraw (-0.25,0.1) circle (1pt);
\end{scope}}

\newcommand{\pentrc}[4]{
\begin{scope}[xshift=#1cm, yshift=#2cm, rotate=#3, scale=#4]
\filldraw[color=gray!40] (0,0) -- (-{sin(18)}, {sin(72)}) -- ({(1-sqrt(5))/2}, 0) -- (0,0);
\draw[] (0,0) -- (-{sin(18)}, {sin(72)}) -- ({(1-sqrt(5))/2}, 0) -- (0,0);
\filldraw (-0.125,0.1) circle (1pt);
\end{scope}}

\newcommand{\pentrd}[4]{
\begin{scope}[xshift=#1cm, yshift=#2cm, rotate=#3, scale=#4]
\filldraw[color=gray!40] (0,0) -- (-{sin(18)}, {sin(72)}) -- ({(1-sqrt(5))/2}, 0) -- (0,0);
\draw[] (0,0) -- (-{sin(18)}, {sin(72)}) -- ({(1-sqrt(5))/2}, 0) -- (0,0);
\filldraw (-0.125,0.1) circle (1pt);
\filldraw (-0.25,0.1) circle (1pt);
\end{scope}}

\newcommand{\pensla}[4]{
\begin{scope}[xshift=#1cm, yshift=#2cm, rotate=#3, scale=#4]
\draw[] (0,0) -- ({sin(54)}, {sin(36)}) -- ({(1+sqrt(5))/2}, 0) -- (0,0);
\filldraw (0.3,0.1) circle (1pt);
\end{scope}}

\newcommand{\penslb}[4]{
\begin{scope}[xshift=#1cm, yshift=#2cm, rotate=#3, scale=#4]
\filldraw[color=gray!40] (0,0) -- ({sin(54)}, {sin(36)}) -- ({(1+sqrt(5))/2}, 0) -- (0,0);
\draw[] (0,0) -- ({sin(54)}, {sin(36)}) -- ({(1+sqrt(5))/2}, 0) -- (0,0);
\filldraw (0.3,0.1) circle (1pt);
\end{scope}}

\newcommand{\pensra}[4]{
\begin{scope}[xshift=#1cm, yshift=#2cm, rotate=#3, scale=#4]
\draw[] (0,0) -- (-{sin(54)}, {sin(36)}) -- ({-(1+sqrt(5))/2}, 0) -- (0,0);
\filldraw (-0.3,0.1) circle (1pt);
\end{scope}}

\newcommand{\pensrb}[4]{
\begin{scope}[xshift=#1cm, yshift=#2cm, rotate=#3, scale=#4]
\filldraw[color=gray!40] (0,0) -- (-{sin(54)}, {sin(36)}) -- ({-(1+sqrt(5))/2}, 0) -- (0,0);
\draw[] (0,0) -- (-{sin(54)}, {sin(36)}) -- ({-(1+sqrt(5))/2}, 0) -- (0,0);
\filldraw (-0.3,0.1) circle (1pt);
\end{scope}}


\newcommand{\penttla}[4]{
\begin{scope}[xshift=#1cm, yshift=#2cm, rotate=#3, scale=#4]
\pentla{1}{0}{108}{1}
\penslb{0.5}{1.538841768587626701285145288018454912003351071768896213519}{-108}{1}
\end{scope}}

\newcommand{\penttlb}[4]{
\begin{scope}[xshift=#1cm, yshift=#2cm, rotate=#3, scale=#4]
\pentlb{1}{0}{108}{1}
\pensla{0.5}{1.538841768587626701285145288018454912003351071768896213519}{-108}{1}
\end{scope}}

\newcommand{\penttlc}[4]{
\begin{scope}[xshift=#1cm, yshift=#2cm, rotate=#3, scale=#4]
\pentlb{1}{0}{108}{1}
\penslb{0.5}{1.538841768587626701285145288018454912003351071768896213519}{-108}{1}
\end{scope}}

\newcommand{\penttld}[4]{
\begin{scope}[xshift=#1cm, yshift=#2cm, rotate=#3, scale=#4]
\pentla{1}{0}{108}{1}
\pensla{0.5}{1.538841768587626701285145288018454912003351071768896213519}{-108}{1}
\end{scope}}
\newcommand{\penttra}[4]{
\begin{scope}[xshift=#1cm, yshift=#2cm, rotate=#3, scale=#4]
\pentra{-1}{0}{-108}{1}
\pensra{-0.5}{1.538841768587626701285145288018454912003351071768896213519}{108}{1}
\end{scope}}

\newcommand{\penttrb}[4]{
\begin{scope}[xshift=#1cm, yshift=#2cm, rotate=#3, scale=#4]
\pentrb{-1}{0}{-108}{1}
\pensrb{-0.5}{1.538841768587626701285145288018454912003351071768896213519}{108}{1}
\end{scope}}

\newcommand{\penttrc}[4]{
\begin{scope}[xshift=#1cm, yshift=#2cm, rotate=#3, scale=#4]
\pentrb{-1}{0}{-108}{1}
\pensra{-0.5}{1.538841768587626701285145288018454912003351071768896213519}{108}{1}
\end{scope}}

\newcommand{\penttrd}[4]{
\begin{scope}[xshift=#1cm, yshift=#2cm, rotate=#3, scale=#4]
\pentra{-1}{0}{-108}{1}
\pensrb{-0.5}{1.538841768587626701285145288018454912003351071768896213519}{108}{1}
\end{scope}}

\newcommand{\penssla}[4]{
\begin{scope}[xshift=#1cm, yshift=#2cm, rotate=#3, scale=#4]

\penslb{1.309016994374947424102293417182819058860154589902881431067}{0.951056516295153572116439333379382143405698634125750222447}{216}{1}
\pentrb{1.309016994374947424102293417182819058860154589902881431067}{0.951056516295153572116439333379382143405698634125750222447}{144}{1}
\pensra{2.618033988749894848204586834365638117720309179805762862135}{0}{0}{1}
\end{scope}}

\newcommand{\pensslb}[4]{
\begin{scope}[xshift=#1cm, yshift=#2cm, rotate=#3, scale=#4]

\pensla{1.309016994374947424102293417182819058860154589902881431067}{0.951056516295153572116439333379382143405698634125750222447}{216}{1}
\pentra{1.309016994374947424102293417182819058860154589902881431067}{0.951056516295153572116439333379382143405698634125750222447}{144}{1}
\pensrb{2.618033988749894848204586834365638117720309179805762862135}{0}{0}{1}
\end{scope}}

\newcommand{\penssra}[4]{
\begin{scope}[xshift=#1cm, yshift=#2cm, rotate=#3, scale=#4]
\pensrb{-1.309016994374947424102293417182819058860154589902881431067}{0.951056516295153572116439333379382143405698634125750222447}{144}{1}
\pentla{-1.309016994374947424102293417182819058860154589902881431067}{0.951056516295153572116439333379382143405698634125750222447}{216}{1}
\pensla{-2.618033988749894848204586834365638117720309179805762862135}{0}{0}{1}
\end{scope}}

\newcommand{\penssrb}[4]{
\begin{scope}[xshift=#1cm, yshift=#2cm, rotate=#3, scale=#4]
\pensra{-1.309016994374947424102293417182819058860154589902881431067}{0.951056516295153572116439333379382143405698634125750222447}{144}{1}
\pentlb{-1.309016994374947424102293417182819058860154589902881431067}{0.951056516295153572116439333379382143405698634125750222447}{216}{1}
\penslb{-2.618033988749894848204586834365638117720309179805762862135}{0}{0}{1}
\end{scope}}

\newcommand{\tri}[4]{
\begin{scope}[xshift=#1cm, yshift=#2cm, rotate=#3, scale=#4]
\filldraw [white,line width=1.5pt] (5,5) -- ([shift=(300:1cm)]5,5) -- (6,5) -- (5,5);
\filldraw[white,line width=1.5pt] (6,5) -- (4.95,5);
\end{scope}}

\newcommand{\trii}[4]{
\begin{scope}[xshift=#1cm, yshift=#2cm, rotate=#3, scale=#4]
\filldraw [gray!50,line width=1.5pt] (5,5) -- ([shift=(300:1cm)]5,5) -- (6,5) -- (5,5);
\filldraw[gray!50,line width=1.5pt] (6,5) -- (4.95,5);
\end{scope}}

\newcommand{\Addresses}{{
  \bigskip

  \textsc{The Roslin Institute, The University of Edinburgh, Easter Bush Campus, EH25 9RG, Edinburgh, United Kingdom}\par\nopagebreak
  \textit{E-mail address:} \texttt{iozkarac@ed.ac.uk}

}}

\tikzset{
    buffer/.style={
        draw,
        shape border rotate=-90,
        isosceles triangle,
        isosceles triangle apex angle=60,
        fill=red,
        node distance=2cm,
        minimum height=4em
    }
}

\title{Planar Substitutions to Lebesgue type Space-Filling Curves and Relatively Dense Fractal-like Sets in the Plane}
\author{Mustafa \.{I}smail \"{O}zkaraca}
\date{\today} 

\begin{document}

\maketitle

\begin{abstract}
\emph{Lebesgue\ curve} is a space-filling curve that fills the unit square through linear interpolation. In this study, we generalise Lebesgue's construction to generate space-filling curves from any given planar substitution satisfying a mild condition. The generated space-filling curves for some known substitutions are elucidated. 
Some of those substitutions further induce relatively dense fractal-like sets in the plane, whenever some additional assumptions are met. 
\end{abstract}

\section{Introduction}\label{Section_Introduction}

A \emph{space\ filling\ curve} of the plane is a continuous mapping defined from the unit interval to a subset of the plane that has a positive area. One of the most common examples of a space-filling curve is given by Lebesgue in \cite{Book_Lebesgue_SFC}. Lebesgue's space-filling curve is formed by linear interpolation over the map $\phi:\Gamma_c\mapsto [0,1]\times [0,1]$ given in \eqref{equation_cantor_map}, where $\Gamma_c$ is the middle-third Cantor set. The map $\phi$ is defined by the ternary representations of the points in the Cantor set, and is a continuous surjection onto the unit square.
\begin{equation}\label{equation_cantor_map}
    \phi\left(0._3\ (2\cdot x_1)(2\cdot x_2)(2\cdot x_3)\dots\right) = \begin{bmatrix}
0._2\ x_1 x_3 x_5\dots, \\
0._2\ x_2 x_4 x_6\dots, 
\end{bmatrix}.
\end{equation}
The geometric interpretation of the Lebesgue's curve is clarified by Sagan \cite{Journal_Sagan_1, Journal_Sagan_2}, by means of approximating polygons; a notion defined by Wunderlich \cite{Journal_Wunderlich}. These approximating polygons are analogous to the iteration steps of the Morton order, first three of which are depicted in Figure \ref{Figure_Morton_Order_First_Three_Iterations}. Using the approximating polygons, Sagan provided another proof in his book \cite{Book_Hans_Sagan}, indicating that the Lebesgue curve is a continuous surjection onto the unit square. This proof can be thought as a geometric construction of the Lebesgue curve.

A \emph{tile} $t$ consists of a subset of $\RR^n$ ($n\in\ZZ^+$) and an assigned colour label. We denote the associated subset by $\sss t$ and the label by $l(t)$. We assume that for every tile $t$, $\sss t$ is homeomorphic to the closed unit disc. A \emph{planar\ substitution} is a map defined over a collection of tiles in $\RR^2$ such that it expands every tile by a fixed factor greater than $1$ and divides each expanded tile into pieces, each of which is a translation of a tile. Throughout the paper we refer planar substitutions as substitutions in short. In this study we introduce an algorithm to produce space-filling curves from substitutions satisfying a weak condition, by mimicking the geometric construction of the Lebesgue's curve. We also prove relatively dense fractal-like sets of the plane can be generated if some further conditions are satisfied. 

The organisation of this paper is as follows. In Section \ref{Section_Background}, we provide the relevant preliminary definitions and an example of a space-filling curve constructed through a substitution in detail. In Section \ref{Section_proof_of_the_main_result}, we define a space-filling curve generator algorithm (Theorem \ref{main_theorem}) and present examples of space-filling curves formed by this algorithm applied over some of the known substitutions (or their variations). Lastly, 
we explain and illustrate with an example how substitutions induce fractal-like sets that are relatively dense in $\RR^2$ in Section \ref{Section_fractal}. 

\begin{figure}[ht]
\centering
\begin{subfigure}[b]{0.24\textwidth}
\centering
\tikz[remember picture]\node[inner sep=0pt,outer sep=0pt] (a){\includegraphics[width=\linewidth]{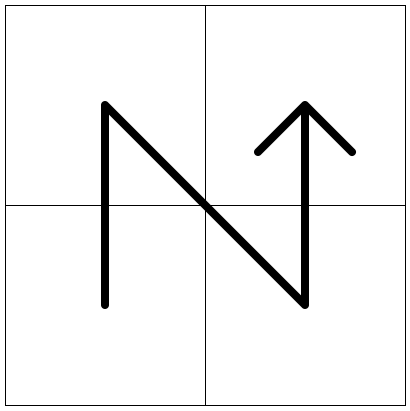}};
\end{subfigure}
\hfill
\begin{subfigure}[b]{0.24\textwidth}
\centering
\tikz[remember picture]\node[inner sep=0pt,outer sep=0pt] (b){\includegraphics[width=\linewidth]{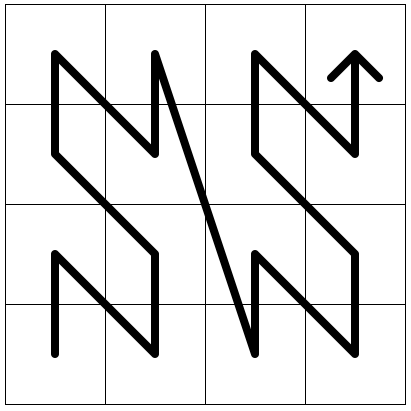}};
\end{subfigure}
\hfill
\begin{subfigure}[b]{0.24\textwidth}
\centering
\tikz[remember picture]\node[inner sep=0pt,outer sep=0pt] (c){\includegraphics[width=\linewidth]{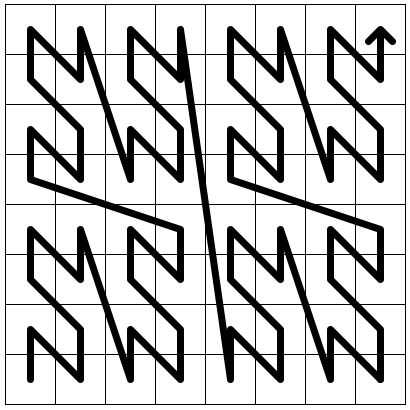}};
\end{subfigure}
\hfill
\tikz[remember picture,overlay]\draw[line width=2pt,-stealth] ([xshift=5pt]a.east) -- ([xshift=30pt]a.east)node[midway,above,text=black,font=\LARGE\bfseries\sffamily] {};
\tikz[remember picture,overlay]\draw[line width=2pt,-stealth] ([xshift=5pt]b.east) -- ([xshift=30pt]b.east)node[midway,above,text=black,font=\LARGE\bfseries\sffamily] {};
\tikz[remember picture,overlay]\draw[line width=2pt,-stealth] ([xshift=5pt]c.east) -- ([xshift=30pt]c.east)node[midway,above,text=black,font=\LARGE\bfseries\sffamily] {};
\caption{First tree iterations of the Morton order}
\label{Figure_Morton_Order_First_Three_Iterations}
\end{figure}

\section{Methodology}\label{Section_Background}
In this section we explain how to generalize the geometric construction of Lebesgue's curve with an example in detail. 
\subsection{Substitutions}
Let $\RR^2$ denote Euclidean plane. We define the following:
\begin{enumerate}
    \item A \emph{tile} $t$ consists of a subset of $\RR^2$ that is homeomorphic to the closed unit disc, and a colour label $l(t)$ that distinguishes $t$ from any other identical sets. The associated subset of $t$ is denoted by $\sss t$. We call $\sss t$ the \emph{support} of $t$.
    \item A \emph{patch} $P$ is a finite collection of tiles so that
\begin{enumerate}
\item[(i)] $\bigcup\limits_{t\in P}\sss t$ is homeomorphic to the closed unit disc,
\item[(ii)] $\mathrm{int\ }(\sss t)\cap\mathrm{int\ }(\sss t^\prime)=\emptyset$ for each distinct $t,t^\prime\in P$. 
\end{enumerate}
The \emph{support\ of\ a\ patch} $P$ is the union of supports of its tiles. It is denoted by $\sss P$; i.e. $\sss P=\bigcup\limits_{t\in P}\sss t$. 
\item For a given tile $t$, a vector $x\in\RR^2$ and a non-zero scalar $\lambda\neq0$, we obtain new tiles $t+x$ and $\lambda t$ by the relations $\sss (t+x)=(\sss t) + x$, $l(t+x)=l(t)$, and $\sss (\lambda t)=\lambda\sss t$, $l(\lambda t)=l(t)$, respectively. We say $t+x$ is a \emph{translation} of $t$ and $\lambda t$ is a \emph{scaled\ copy} of $t$. Similarly, for a given patch $P$, a vector $x\in\RR^2$ and a non-zero scalar $\lambda\neq0$, we obtain new patches $P+x$ and $\lambda P$ by the relations $P+x=\{t+x:\ t\in P\}$ and $\{\lambda t:\ t\in P\}$, respectively. We say $P+x$ is a \emph{translation} of $P$ and $\lambda P$ is a \emph{scaled\ copy} of $P$.
\end{enumerate}

\begin{definition} 
Suppose $\cP$ is a given collection of tiles. Let $\cP^*$ denote the set of all patches consisting of tiles that are translations of tiles in $\cP$. A map $\omega:\cP\mapsto\cP^*$ is called a (\emph{planar}) \emph{substitution} if there exists $\lambda>1$ such that $\sss \omega(p)=\lambda\cdot\sss \omega(p)$ for all $p\in\cP$.

We call $\lambda$ the \emph{expansion\ factor} of $\omega$. We say that $\omega$ is a \emph{finite\ substitution} if, in addition, $\cP$ has a finite size.
\end{definition}
\begin{definition}
For a given substitution $\omega:\cP\mapsto\cP^*$, the patch $\omega^n(p)$ for $p\in\cP$ and $n\in\ZZ^+$ is called an \emph{n-supertile} of $\omega$. We say that every tile in $\cP$ is a $0$-supertile of $\omega$; i.e. $\omega^0(p):=p$ for $p\in\cP$.
\end{definition}

An example of a substitution is given in Figure \ref{Figure_Fibonacci_Substitution}. It is defined over the two intervals $[0,(1+\sqrt{5})/2]$ and $[0,1]$ with labels $a,b$, respectively. The interval $[0,(1+\sqrt{5})/2]$ with label $a$ is substituted into a patch consisting of two intervals $[0,(1+\sqrt{5})/2]$ and $[(1+\sqrt{5})/2, (3+\sqrt{5})/2]$ with labels $a,b$, respectively. The interval $[0,1]$ with label $b$ is substituted into a patch consisting of a single interval $[0,(1+\sqrt{5})/2]$ with label $a$. This substitution is called the \emph{Fibonacci\ substitution}. The expansion factor for the Fibonacci substitution is the golden mean $(1+\sqrt{5})/2$. By taking the Cartesian product of two Fibonacci substitutions, we get another substitution that is illustrated in Figure \ref{Figure_Fibonacci_times_Fibonacci_Substitution}. Throughout the document we denote the Fibonacci substitution by $\mu$, the Cartesian product of two Fibonacci substitutions by $\nu$ and their associated domains by $\cP_{\mu},\cP_{\nu}$, respectively. The domain $\cP_{\nu}$ consists of four\footnote{The tiles $p_b$ and $p_c$ are rotations of one another.} tiles $p_a,p_b,p_c,p_d$ such that $p_i$ is the tile with $l(p_i)=i$ for $i\in\{a,b,c,d\}$. 

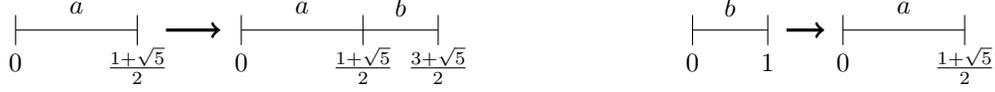
\begin{figure}[ht]
\centering
\begin{tikzpicture}
\draw (0,0) -- (1.6180339887498948482045868343656381177203091798057628621354486227,0);
\draw[very thick, ->] (2,0) -- (2.72,0);
\draw (3,0) -- (5.6180339887498948482045868343656381177203091798057628621354486227,0);
\draw (0,-0.2) -- (0,0.2);
\draw (1.6180339887498948482045868343656381177203091798057628621354486227,-0.2) -- (1.6180339887498948482045868343656381177203091798057628621354486227,0.2);
\draw (3,-0.2) -- (3,0.2);
\draw (5.6180339887498948482045868343656381177203091798057628621354486227,0.2) -- (5.6180339887498948482045868343656381177203091798057628621354486227,-0.2);
\draw (4.6180339887498948482045868343656381177203091798057628621354486227,0.2) -- (4.6180339887498948482045868343656381177203091798057628621354486227,-0.2);

\node[anchor=north] at (0,-0.2) {$0$};
\node[anchor=north] at (1.6180339887498948482045868343656381177203091798057628621354486227,-0.1) {$\frac{1+\sqrt{5}}{2}$};

\node[anchor=north] at (3,-0.2) {$0$};
\node[anchor=north] at (4.6180339887498948482045868343656381177203091798057628621354486227,-0.1) {$\frac{1+\sqrt{5}}{2}$};
\node[anchor=north] at (5.6180339887498948482045868343656381177203091798057628621354486227,-0.1) {$\frac{3+\sqrt{5}}{2}$};

\node at (0.8090169943749474241022934171828190588601545899028814310677243113,0.3) {$a$};
\node at (3.8090169943749474241022934171828190588601545899028814310677243113,0.3) {$a$};
\node at (0.5+4.6180339887498948482045868343656381177203091798057628621354486227,0.3) {$b$};

\draw (7+2,0) -- (8+2,0);
\draw[very thick, ->] (8.25+2,0) -- (8.75+2,0);
\draw (9+2,0) -- (10.6180339887498948482045868343656381177203091798057628621354486227+2,0);
\draw (9+2,-0.2) -- (9+2,0.2);
\draw (10.6180339887498948482045868343656381177203091798057628621354486227+2,0.2) --(10.6180339887498948482045868343656381177203091798057628621354486227+2,-0.2);
\draw (7+2,0.2) -- (7+2,-0.2);
\draw (8+2,0.2) -- (8+2,-0.2);

\node[anchor=north] at (7+2,-0.2) {$0$};
\node[anchor=north] at (8+2,-0.2) {$1$};
\node at (7.5+2,0.3) {$b$};
\node at (9.8090169943749474241022934171828190588601545899028814310677243113+2,0.3) {$a$};

\node[anchor=north] at (9+2,-0.2) {$0$};
\node[anchor=north] at (10.6180339887498948482045868343656381177203091798057628621354486227+2,-0.1) {$\frac{1+\sqrt{5}}{2}$};

\end{tikzpicture}    
\caption{The Fibonacci substitution $\mu$.}
\label{Figure_Fibonacci_Substitution}
\end{figure}

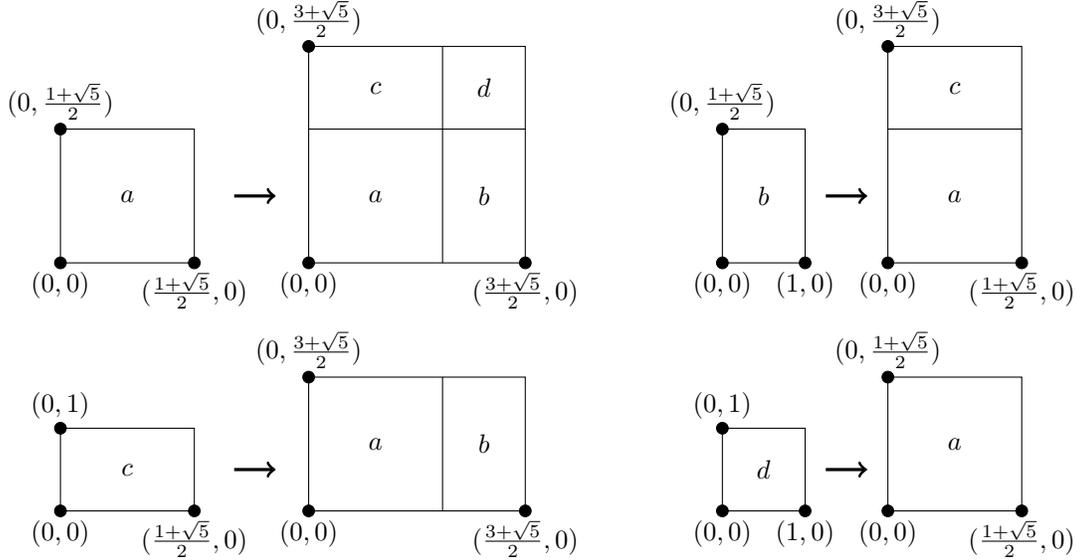
\begin{figure}[H]
\centering
\begin{tikzpicture}[scale=1.1]
\draw (0,0) rectangle (1.6180339887498948482045868343656381177203091798057628621354486227,1.6180339887498948482045868343656381177203091798057628621354486227);
\draw[very thick, ->] (2.1,0.8090169943749474241022934171828190588601545899028814310677243113) -- (2.6,0.8090169943749474241022934171828190588601545899028814310677243113);
\draw (3,0) rectangle (5.6180339887498948482045868343656381177203091798057628621354486227,2.6180339887498948482045868343656381177203091798057628621354486227);
\draw (8,0) rectangle (9,1.6180339887498948482045868343656381177203091798057628621354486227);
\draw[very thick, ->] (9.25,0.8090169943749474241022934171828190588601545899028814310677243113) -- (9.75,0.8090169943749474241022934171828190588601545899028814310677243113);
\draw (10,0) rectangle (11.6180339887498948482045868343656381177203091798057628621354486227,2.6180339887498948482045868343656381177203091798057628621354486227);
\draw (10,1.6180339887498948482045868343656381177203091798057628621354486227) -- (11.6180339887498948482045868343656381177203091798057628621354486227,1.6180339887498948482045868343656381177203091798057628621354486227);
\draw (3,1.6180339887498948482045868343656381177203091798057628621354486227) -- (5.6180339887498948482045868343656381177203091798057628621354486227,1.6180339887498948482045868343656381177203091798057628621354486227);
\draw (4.6180339887498948482045868343656381177203091798057628621354486227,2.6180339887498948482045868343656381177203091798057628621354486227) -- (4.6180339887498948482045868343656381177203091798057628621354486227,0);
\draw (0,-3) rectangle (1.6180339887498948482045868343656381177203091798057628621354486227,1-3);
\draw[very thick, ->] (2.1,0.5-3) -- (2.6,0.5-3);
\draw (3,-3) rectangle (5.6180339887498948482045868343656381177203091798057628621354486227,1.6180339887498948482045868343656381177203091798057628621354486227-3);
\draw (4.6180339887498948482045868343656381177203091798057628621354486227,1.6180339887498948482045868343656381177203091798057628621354486227-3) -- (4.6180339887498948482045868343656381177203091798057628621354486227,1.6180339887498948482045868343656381177203091798057628621354486227-3-1.6180339887498948482045868343656381177203091798057628621354486227);
\draw (8,-3) rectangle (9,1-3);
\draw[very thick, ->] (9.25,0.5-3) -- (9.75,0.5-3);
\draw (10,0-3) rectangle (11.6180339887498948482045868343656381177203091798057628621354486227,1.6180339887498948482045868343656381177203091798057628621354486227-3);
\node at (0.8090169943749474241022934171828190588601545899028814310677243113,0.8090169943749474241022934171828190588601545899028814310677243113) {$a$};
\node at (3.8090169943749474241022934171828190588601545899028814310677243113,0.8090169943749474241022934171828190588601545899028814310677243113) {$a$};
\node at (3.8090169943749474241022934171828190588601545899028814310677243113,0.8090169943749474241022934171828190588601545899028814310677243113-3) {$a$};
\node at (3.8090169943749474241022934171828190588601545899028814310677243113+1.3090169943749474241022934171828190588601545899028814310677243113,0.8090169943749474241022934171828190588601545899028814310677243113-3) {$b$};
\node at (3.8090169943749474241022934171828190588601545899028814310677243113+1.3090169943749474241022934171828190588601545899028814310677243113,0.8090169943749474241022934171828190588601545899028814310677243113) {$b$};
\node at (3.8090169943749474241022934171828190588601545899028814310677243113+1.3090169943749474241022934171828190588601545899028814310677243113,0.8090169943749474241022934171828190588601545899028814310677243113+1.3090169943749474241022934171828190588601545899028814310677243113) {$d$};
\node at (3.8090169943749474241022934171828190588601545899028814310677243113,0.8090169943749474241022934171828190588601545899028814310677243113+1.3090169943749474241022934171828190588601545899028814310677243113) {$c$};
\node at (10.8090169943749474241022934171828190588601545899028814310677243113,0.8090169943749474241022934171828190588601545899028814310677243113+1.3090169943749474241022934171828190588601545899028814310677243113) {$c$};
\node at (8.5,0.8090169943749474241022934171828190588601545899028814310677243113) {$b$};
\node at (8.5,0.5-3) {$d$};
\node at (0.8090169943749474241022934171828190588601545899028814310677243113,0.5-3) {$c$};
\node at (10.8090169943749474241022934171828190588601545899028814310677243113,0.8090169943749474241022934171828190588601545899028814310677243113-3) {$a$};
\node at (10.8090169943749474241022934171828190588601545899028814310677243113,0.8090169943749474241022934171828190588601545899028814310677243113) {$a$};
\filldraw[black] (0,0) circle (2pt) node[anchor=north] {$(0,0)$};
\filldraw[black] (3,0) circle (2pt) node[anchor=north] {$(0,0)$} ; 
\filldraw[black] (0,1.6180339887498948482045868343656381177203091798057628621354486227) circle (2pt) node[anchor=south] {$(0,\frac{1+\sqrt{5}}{2})$}; 
\filldraw[black] (3,2.6180339887498948482045868343656381177203091798057628621354486227) circle (2pt) node[anchor=south] {$(0,\frac{3+\sqrt{5}}{2})$}; 
\filldraw[black] (0,-3) circle (2pt) node[anchor=north] {$(0,0)$}; 
\filldraw[black] (3,-3) circle (2pt) node[anchor=north] {$(0,0)$}; 
\filldraw[black] (0,1-3) circle (2pt) node[anchor=south] {$(0,1)$};
\filldraw[black] (3,1.6180339887498948482045868343656381177203091798057628621354486227-3) circle (2pt) node[anchor=south] {$(0,\frac{3+\sqrt{5}}{2})$}; 

\filldraw[black] (3+2.6180339887498948482045868343656381177203091798057628621354486227,0-3) circle (2pt) node[anchor=north] {$(\frac{3+\sqrt{5}}{2},0)$};
\filldraw[black] (1.6180339887498948482045868343656381177203091798057628621354486227,0-3) circle (2pt) node[anchor=north] {$(\frac{1+\sqrt{5}}{2},0)$}; 
\filldraw[black] (1.6180339887498948482045868343656381177203091798057628621354486227,0) circle (2pt) node[anchor=north] {$(\frac{1+\sqrt{5}}{2},0)$};
\filldraw[black] (5.6180339887498948482045868343656381177203091798057628621354486227,0) circle (2pt) node[anchor=north] {$(\frac{3+\sqrt{5}}{2},0)$};
\filldraw[black] (8,0) circle (2pt) node[anchor=north] {$(0,0)$}; 
\filldraw[black] (9,0) circle (2pt) node[anchor=north] {$(1,0)$}; 
\filldraw[black] (8,0-3) circle (2pt) node[anchor=north] {$(0,0)$}; 
\filldraw[black] (9,0-3) circle (2pt) node[anchor=north] {$(1,0)$}; 
\filldraw[black] (8,1.6180339887498948482045868343656381177203091798057628621354486227) circle (2pt) node[anchor=south] {$(0,\frac{1+\sqrt{5}}{2})$}; 
\filldraw[black] (10,0) circle (2pt) node[anchor=north] {$(0,0)$}; 
\filldraw[black]  (10,-3) circle (2pt) node[anchor=north] {$(0,0)$}; 
\filldraw[black]  (10,2.6180339887498948482045868343656381177203091798057628621354486227) circle (2pt) node[anchor=south] {$(0,\frac{3+\sqrt{5}}{2})$}; 
\filldraw[black] (10,1.6180339887498948482045868343656381177203091798057628621354486227-3) circle (2pt) node[anchor=south]  {$(0,\frac{1+\sqrt{5}}{2})$}; 
\filldraw[black] (10+1.6180339887498948482045868343656381177203091798057628621354486227,0) circle (2pt) node[anchor=north]  {$(\frac{1+\sqrt{5}}{2},0)$}; 
\filldraw[black] (10+1.6180339887498948482045868343656381177203091798057628621354486227,0-3) circle (2pt) node[anchor=north]  {$(\frac{1+\sqrt{5}}{2},0)$}; 
\filldraw[black] (8,0-3+1) circle (2pt) node[anchor=south]  {$(0,1)$}; 
\end{tikzpicture}    
\caption{The substitution $\nu$.}
\label{Figure_Fibonacci_times_Fibonacci_Substitution}
\end{figure}
\begin{remark}[Powers of substitutions]
Let $\omega:\cP\mapsto\cP^*$ be a given substitution with an expansion factor $\lambda$. The range of a substitution $\omega$ does not necessarily contain its domain. That is,  $\omega^2$ may be not well-defined. Therefore, we define an extension $\omega^\prime:\cP+\RR^2\mapsto\cP^*$ by $\omega^\prime(p+x)=\omega(p)+\lambda\cdot x$ for $p\in\cP$ and $x\in\RR^2$, and define $(\omega^\prime)^n(p)$ for $p\in\cP$ and $n\geq 2$ recursively as follows:
\[
{\displaystyle (\omega^\prime)^n(p):=\bigcup\limits_{t\in(\omega^\prime)^{n-1}(p)}\omega^\prime(t).}
\]
The powers $(\omega^\prime)^n$ for $n\in\ZZ^+$ are well-defined. We use the maps $\omega$ and $\omega^\prime$ interchangeably, when there is no confusion of taking powers of the substitutions.
\end{remark}
\subsection{Total Order Structures by $\nu$}\label{Section_Total_Order}
A fundamental ingredient of Sagan's geometric construction of the Lebesgue curve is the approximating polygons \cite{Journal_Sagan_1, Journal_Sagan_2}. For that we define total orders over the supertiles of $\nu$. These total order structures are the key feature to form such approximating polygons.

Assume that $\nu(p_a)=\{a_1,a_2,a_3,a_4\}$, $\nu(p_b)=\{b_1,b_2\}$, $\nu(p_c)=\{c_1,c_2\}$ and $\nu(p_d)=\{d_1\}$, as shown in Figure \ref{Figure_1_supertiles_of_Fibonacci_times_Fibonacci_with_tile_names}. The relations given in \eqref{a} - \eqref{d} indicate a total order $\underset{i,1}{\lesssim}$ on the collection $\nu(p_i)$ for each $i\in\{a,b,c,d\}$, respectively.
\begin{align}
a_j\underset{a,1}{\lesssim}a_k & \  \mathrm{if\ } j\leq k \mathrm{\ for\ each\ } j,k\in\{1,2,3,4\}.\label{a} \\ 
b_j\underset{b,1}{\lesssim}b_k &\  \mathrm{if\ } j\leq k \mathrm{\ for\ each\ } j,k\in\{1,2\}. \label{b}\\ 
c_j\underset{c,1}{\lesssim}c_k &\ \mathrm{if\ } j\leq k \mathrm{\ for\ each\ } j,k\in\{1,2\}. \label{c} \\ 
d_j\underset{d,1}{\lesssim}d_k &\ \mathrm{if\ } j\leq k \mathrm{\ for\ each\ } j,k\in\{1\}. \label{d}
\end{align}
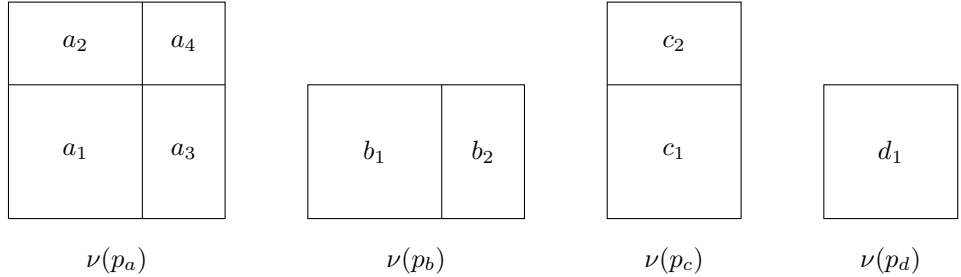
\begin{figure}[ht]
\centering
\begin{tikzpicture}[scale=1.1]
\foreach \index in {0}{
\draw (0,0+\index) rectangle (2.6180339887498948482045868343656381177203091798057628621354486227,2.6180339887498948482045868343656381177203091798057628621354486227+\index);
\draw (1.6180339887498948482045868343656381177203091798057628621354486227,0+\index) -- (1.6180339887498948482045868343656381177203091798057628621354486227,2.6180339887498948482045868343656381177203091798057628621354486227+\index);
\draw (0,1.6180339887498948482045868343656381177203091798057628621354486227+\index) -- (2.6180339887498948482045868343656381177203091798057628621354486227,1.6180339887498948482045868343656381177203091798057628621354486227+\index);

\draw (3.6180339887498948482045868343656381177203091798057628621354486227,0+\index) rectangle (3.6180339887498948482045868343656381177203091798057628621354486227+2.6180339887498948482045868343656381177203091798057628621354486227,1.6180339887498948482045868343656381177203091798057628621354486227+\index);
\draw (3.6180339887498948482045868343656381177203091798057628621354486227+1.6180339887498948482045868343656381177203091798057628621354486227,0+\index) -- (3.6180339887498948482045868343656381177203091798057628621354486227+1.6180339887498948482045868343656381177203091798057628621354486227,1.6180339887498948482045868343656381177203091798057628621354486227+\index);

\draw (3.6180339887498948482045868343656381177203091798057628621354486227+3.6180339887498948482045868343656381177203091798057628621354486227,0+\index) rectangle (3.6180339887498948482045868343656381177203091798057628621354486227+3.6180339887498948482045868343656381177203091798057628621354486227+1.6180339887498948482045868343656381177203091798057628621354486227,2.6180339887498948482045868343656381177203091798057628621354486227+\index);
\draw (3.6180339887498948482045868343656381177203091798057628621354486227+3.6180339887498948482045868343656381177203091798057628621354486227,1.6180339887498948482045868343656381177203091798057628621354486227+\index) -- (3.6180339887498948482045868343656381177203091798057628621354486227+3.6180339887498948482045868343656381177203091798057628621354486227+1.6180339887498948482045868343656381177203091798057628621354486227,1.6180339887498948482045868343656381177203091798057628621354486227+\index);

\draw (3.6180339887498948482045868343656381177203091798057628621354486227+3.6180339887498948482045868343656381177203091798057628621354486227+2.6180339887498948482045868343656381177203091798057628621354486227,0+\index) rectangle (3.6180339887498948482045868343656381177203091798057628621354486227+3.6180339887498948482045868343656381177203091798057628621354486227+2.6180339887498948482045868343656381177203091798057628621354486227+1.6180339887498948482045868343656381177203091798057628621354486227,1.6180339887498948482045868343656381177203091798057628621354486227+\index);
}
\node at (0.8090169943749474241022934171828190588601545899028814310677243113,0.8090169943749474241022934171828190588601545899028814310677243113) {$a_1$};
\node at (0.8090169943749474241022934171828190588601545899028814310677243113,0.5+1.6180339887498948482045868343656381177203091798057628621354486227) {$a_2$};
\node at (0.5+1.6180339887498948482045868343656381177203091798057628621354486227,0.8090169943749474241022934171828190588601545899028814310677243113) {$a_3$};
\node at (0.5+1.6180339887498948482045868343656381177203091798057628621354486227,0.5+1.6180339887498948482045868343656381177203091798057628621354486227) {$a_4$};

\node at (0.8090169943749474241022934171828190588601545899028814310677243113+0.5,-0.5) {$\nu(p_a)$};

\node at (0.8090169943749474241022934171828190588601545899028814310677243113+3.6180339887498948482045868343656381177203091798057628621354486227,0.8090169943749474241022934171828190588601545899028814310677243113) {$b_1$};

\node at (0.5+1.6180339887498948482045868343656381177203091798057628621354486227+3.6180339887498948482045868343656381177203091798057628621354486227,0.8090169943749474241022934171828190588601545899028814310677243113) {$b_2$};

\node at (0.8090169943749474241022934171828190588601545899028814310677243113+0.5+3.6180339887498948482045868343656381177203091798057628621354486227,-0.5) {$\nu(p_b)$};

\node at (3.6180339887498948482045868343656381177203091798057628621354486227+3.6180339887498948482045868343656381177203091798057628621354486227+0.8090169943749474241022934171828190588601545899028814310677243113, 0.8090169943749474241022934171828190588601545899028814310677243113) {$c_1$};

\node at (3.6180339887498948482045868343656381177203091798057628621354486227+3.6180339887498948482045868343656381177203091798057628621354486227+0.8090169943749474241022934171828190588601545899028814310677243113, 0.5+1.6180339887498948482045868343656381177203091798057628621354486227) {$c_2$};

\node at (3.6180339887498948482045868343656381177203091798057628621354486227+3.6180339887498948482045868343656381177203091798057628621354486227+0.8090169943749474241022934171828190588601545899028814310677243113,-0.5) {$\nu(p_c)$};

\node at (3.6180339887498948482045868343656381177203091798057628621354486227+3.6180339887498948482045868343656381177203091798057628621354486227+2.6180339887498948482045868343656381177203091798057628621354486227+0.8090169943749474241022934171828190588601545899028814310677243113,0.8090169943749474241022934171828190588601545899028814310677243113) {$d_1$};

\node at (3.6180339887498948482045868343656381177203091798057628621354486227+3.6180339887498948482045868343656381177203091798057628621354486227+2.6180339887498948482045868343656381177203091798057628621354486227+0.8090169943749474241022934171828190588601545899028814310677243113,-0.5) {$\nu(p_d)$};
\end{tikzpicture}
\caption{The patches $\nu(p_a)$, $\nu(p_b)$, $\nu(p_c)$ and $\nu(p_d)$, from left to right.} 
\label{Figure_1_supertiles_of_Fibonacci_times_Fibonacci_with_tile_names}
\end{figure}

Note that $\nu^{m}(p)=\bigcup\limits_{t\in\nu(p)}\omega^{m-1}(t)$ for each $m>1$ and $p\in\cP_{\nu}$. Therefore, the relations in \eqref{a} - \eqref{d} induce total orders for supertiles of $\nu$ inductively. More precisely, suppose $i\in\{a,b,c,d\}$ and $m>1$ are fixed. Suppose further $\underset{i,m-1}{\lesssim}$ is a total order on the collection $\nu^{m-1}(p_i)$. Define $\underset{i,m}{\lesssim}$ over the patch $\nu^{m}(p_i)$ such that:
\begin{enumerate}
    \item[(i)]  If $x,y\in\nu^{m-1}(t)$ for some $t\in \nu(p_i)$, then $x\underset{i,m}{\lesssim}y$ whenever $x\underset{i,m-1}{\lesssim}y$.
    \item[(ii)]  If $x\in\nu^{m-1}(t_1)$ and $y\in\nu^{m-1}(t_2)$ for some distinct $t_1,t_2\in \nu(p_i)$, then $x\underset{i,m}{\lesssim}y$ whenever $t_1\underset{i,1}{\lesssim}t_2$.
\end{enumerate}
The total orders between the tiles of 1-supertiles and 2-supertiles of $\nu$ are shown in Figure \ref{Figure_Total_Order_in_tiles_in_the_1_and_2_supertiles_of_W_FxF}. The associated order structures are elucidated by the numbers attached to the tiles in the figure.
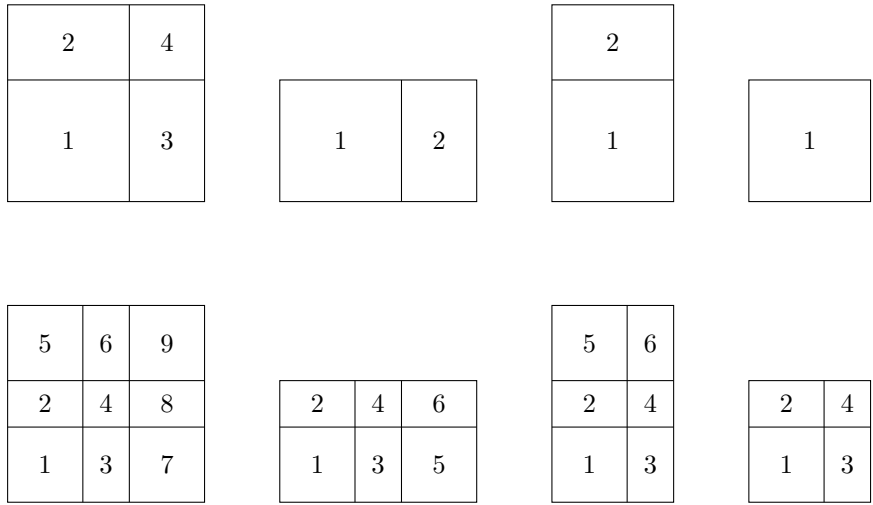
\begin{figure}[H]
\centering
\begin{tikzpicture}[scale=1]
\foreach \index in {0, -4}{
\draw (0,0+\index) rectangle (2.6180339887498948482045868343656381177203091798057628621354486227,2.6180339887498948482045868343656381177203091798057628621354486227+\index);
\draw (1.6180339887498948482045868343656381177203091798057628621354486227,0+\index) -- (1.6180339887498948482045868343656381177203091798057628621354486227,2.6180339887498948482045868343656381177203091798057628621354486227+\index);
\draw (0,1.6180339887498948482045868343656381177203091798057628621354486227+\index) -- (2.6180339887498948482045868343656381177203091798057628621354486227,1.6180339887498948482045868343656381177203091798057628621354486227+\index);

\draw (3.6180339887498948482045868343656381177203091798057628621354486227,0+\index) rectangle (3.6180339887498948482045868343656381177203091798057628621354486227+2.6180339887498948482045868343656381177203091798057628621354486227,1.6180339887498948482045868343656381177203091798057628621354486227+\index);
\draw (3.6180339887498948482045868343656381177203091798057628621354486227+1.6180339887498948482045868343656381177203091798057628621354486227,0+\index) -- (3.6180339887498948482045868343656381177203091798057628621354486227+1.6180339887498948482045868343656381177203091798057628621354486227,1.6180339887498948482045868343656381177203091798057628621354486227+\index);

\draw (3.6180339887498948482045868343656381177203091798057628621354486227+3.6180339887498948482045868343656381177203091798057628621354486227,0+\index) rectangle (3.6180339887498948482045868343656381177203091798057628621354486227+3.6180339887498948482045868343656381177203091798057628621354486227+1.6180339887498948482045868343656381177203091798057628621354486227,2.6180339887498948482045868343656381177203091798057628621354486227+\index);
\draw (3.6180339887498948482045868343656381177203091798057628621354486227+3.6180339887498948482045868343656381177203091798057628621354486227,1.6180339887498948482045868343656381177203091798057628621354486227+\index) -- (3.6180339887498948482045868343656381177203091798057628621354486227+3.6180339887498948482045868343656381177203091798057628621354486227+1.6180339887498948482045868343656381177203091798057628621354486227,1.6180339887498948482045868343656381177203091798057628621354486227+\index);

\draw (3.6180339887498948482045868343656381177203091798057628621354486227+3.6180339887498948482045868343656381177203091798057628621354486227+2.6180339887498948482045868343656381177203091798057628621354486227,0+\index) rectangle (3.6180339887498948482045868343656381177203091798057628621354486227+3.6180339887498948482045868343656381177203091798057628621354486227+2.6180339887498948482045868343656381177203091798057628621354486227+1.6180339887498948482045868343656381177203091798057628621354486227,1.6180339887498948482045868343656381177203091798057628621354486227+\index);
}

\draw (1,-4) -- (1,-4+2.6180339887498948482045868343656381177203091798057628621354486227);
\draw (0,-4+1) -- (0+2.6180339887498948482045868343656381177203091798057628621354486227,-4+1);
\node at (0+0.8090169943749474241022934171828190588601545899028814310677243113,0+0.8090169943749474241022934171828190588601545899028814310677243113) {$1$};
\node at (0.5+1.6180339887498948482045868343656381177203091798057628621354486227,0+0.8090169943749474241022934171828190588601545899028814310677243113) {$3$};
\node at (0+0.8090169943749474241022934171828190588601545899028814310677243113,0.5+1.6180339887498948482045868343656381177203091798057628621354486227) {$2$};
\node at (0.5+1.6180339887498948482045868343656381177203091798057628621354486227,0.5+1.6180339887498948482045868343656381177203091798057628621354486227) {$4$};
\node at (0+0.5,-4+0.5) {$1$};
\node at (0+1+0.3090169943749474241022934171828190588601545899028814310677243113,-4+0.5) {$3$};
\node at (0+0.5,-4+1+0.3090169943749474241022934171828190588601545899028814310677243113) {$2$};
\node at (0+1+0.3090169943749474241022934171828190588601545899028814310677243113,-4+1+0.3090169943749474241022934171828190588601545899028814310677243113) {$4$};
\node at (0+0.5,-4+1.6180339887498948482045868343656381177203091798057628621354486227+0.5) {$5$};
\node at (0+1+0.3090169943749474241022934171828190588601545899028814310677243113,-4+1.6180339887498948482045868343656381177203091798057628621354486227+0.5) {$6$};
\node at (0+1.6180339887498948482045868343656381177203091798057628621354486227+0.5,-4+0.5) {$7$};
\node at (0+1.6180339887498948482045868343656381177203091798057628621354486227+0.5,-4+1+0.3090169943749474241022934171828190588601545899028814310677243113) {$8$};
\node at (0+1.6180339887498948482045868343656381177203091798057628621354486227+0.5,-4+1.6180339887498948482045868343656381177203091798057628621354486227+0.5) {$9$};
\draw (3.6180339887498948482045868343656381177203091798057628621354486227,0+1-4) -- (3.6180339887498948482045868343656381177203091798057628621354486227+2.6180339887498948482045868343656381177203091798057628621354486227,0+1-4); 
\draw (3.6180339887498948482045868343656381177203091798057628621354486227+1,0-4) -- (3.6180339887498948482045868343656381177203091798057628621354486227+1,0-4+1.6180339887498948482045868343656381177203091798057628621354486227); 
\node at (3.6180339887498948482045868343656381177203091798057628621354486227+0.8090169943749474241022934171828190588601545899028814310677243113,0+0.8090169943749474241022934171828190588601545899028814310677243113) {$1$};
\node at (3.6180339887498948482045868343656381177203091798057628621354486227+1.6180339887498948482045868343656381177203091798057628621354486227+0.5,0+0.8090169943749474241022934171828190588601545899028814310677243113) {$2$};
\node at (3.6180339887498948482045868343656381177203091798057628621354486227+0.5,-4+0.5) {$1$};
\node at (3.6180339887498948482045868343656381177203091798057628621354486227+0.5,-4+1+0.3090169943749474241022934171828190588601545899028814310677243113) {$2$};
\node at (3.6180339887498948482045868343656381177203091798057628621354486227+1+0.3090169943749474241022934171828190588601545899028814310677243113,-4+0.5) {$3$};
\node at (3.6180339887498948482045868343656381177203091798057628621354486227+1+0.3090169943749474241022934171828190588601545899028814310677243113,-4+1+0.3090169943749474241022934171828190588601545899028814310677243113) {$4$};
\node at (3.6180339887498948482045868343656381177203091798057628621354486227+1.6180339887498948482045868343656381177203091798057628621354486227+0.5,-4+0.5) {$5$};
\node at (3.6180339887498948482045868343656381177203091798057628621354486227+1.6180339887498948482045868343656381177203091798057628621354486227+0.5,-4+1+0.3090169943749474241022934171828190588601545899028814310677243113) {$6$};
\draw (3.6180339887498948482045868343656381177203091798057628621354486227+3.6180339887498948482045868343656381177203091798057628621354486227,-4+1)--(3.6180339887498948482045868343656381177203091798057628621354486227+3.6180339887498948482045868343656381177203091798057628621354486227+1.6180339887498948482045868343656381177203091798057628621354486227,-4+1);
\draw (3.6180339887498948482045868343656381177203091798057628621354486227+3.6180339887498948482045868343656381177203091798057628621354486227+1,-4)--(3.6180339887498948482045868343656381177203091798057628621354486227+3.6180339887498948482045868343656381177203091798057628621354486227+1,-4+2.6180339887498948482045868343656381177203091798057628621354486227);
\node at (3.6180339887498948482045868343656381177203091798057628621354486227+3.6180339887498948482045868343656381177203091798057628621354486227+0.8090169943749474241022934171828190588601545899028814310677243113,0+0.8090169943749474241022934171828190588601545899028814310677243113) {$1$};
\node at (3.6180339887498948482045868343656381177203091798057628621354486227+3.6180339887498948482045868343656381177203091798057628621354486227+0.8090169943749474241022934171828190588601545899028814310677243113,0+1.6180339887498948482045868343656381177203091798057628621354486227+0.5) {$2$};
\node at (3.6180339887498948482045868343656381177203091798057628621354486227+3.6180339887498948482045868343656381177203091798057628621354486227+0.5,-4+0.5) {$1$};
\node at (3.6180339887498948482045868343656381177203091798057628621354486227+3.6180339887498948482045868343656381177203091798057628621354486227+0.5,-4+1+0.3090169943749474241022934171828190588601545899028814310677243113) {$2$};
\node at (3.6180339887498948482045868343656381177203091798057628621354486227+3.6180339887498948482045868343656381177203091798057628621354486227+1+0.3090169943749474241022934171828190588601545899028814310677243113,-4+0.5) {$3$};
\node at (3.6180339887498948482045868343656381177203091798057628621354486227+3.6180339887498948482045868343656381177203091798057628621354486227+1+0.3090169943749474241022934171828190588601545899028814310677243113,-4+1+0.3090169943749474241022934171828190588601545899028814310677243113) {$4$};
\node at (3.6180339887498948482045868343656381177203091798057628621354486227+3.6180339887498948482045868343656381177203091798057628621354486227+0.5,-4+1.6180339887498948482045868343656381177203091798057628621354486227+0.5) {$5$};
\node at (3.6180339887498948482045868343656381177203091798057628621354486227+3.6180339887498948482045868343656381177203091798057628621354486227+1+0.3090169943749474241022934171828190588601545899028814310677243113,-4+1.6180339887498948482045868343656381177203091798057628621354486227+0.5) {$6$};

\draw (3.6180339887498948482045868343656381177203091798057628621354486227+3.6180339887498948482045868343656381177203091798057628621354486227+2.6180339887498948482045868343656381177203091798057628621354486227+1,-4) -- (3.6180339887498948482045868343656381177203091798057628621354486227+3.6180339887498948482045868343656381177203091798057628621354486227+2.6180339887498948482045868343656381177203091798057628621354486227+1,-4+1.6180339887498948482045868343656381177203091798057628621354486227);
\draw (3.6180339887498948482045868343656381177203091798057628621354486227+3.6180339887498948482045868343656381177203091798057628621354486227+2.6180339887498948482045868343656381177203091798057628621354486227,-4+1) -- (3.6180339887498948482045868343656381177203091798057628621354486227+3.6180339887498948482045868343656381177203091798057628621354486227+2.6180339887498948482045868343656381177203091798057628621354486227+1.6180339887498948482045868343656381177203091798057628621354486227,-4+1);
\node at (3.6180339887498948482045868343656381177203091798057628621354486227+3.6180339887498948482045868343656381177203091798057628621354486227+2.6180339887498948482045868343656381177203091798057628621354486227+0.8090169943749474241022934171828190588601545899028814310677243113,0+0.8090169943749474241022934171828190588601545899028814310677243113) {$1$};
\node at (3.6180339887498948482045868343656381177203091798057628621354486227+3.6180339887498948482045868343656381177203091798057628621354486227+2.6180339887498948482045868343656381177203091798057628621354486227+0.5,-4+0.5) {$1$};
\node at (3.6180339887498948482045868343656381177203091798057628621354486227+3.6180339887498948482045868343656381177203091798057628621354486227+2.6180339887498948482045868343656381177203091798057628621354486227+0.5,-4+1+0.3090169943749474241022934171828190588601545899028814310677243113) {$2$};
\node at (3.6180339887498948482045868343656381177203091798057628621354486227+3.6180339887498948482045868343656381177203091798057628621354486227+2.6180339887498948482045868343656381177203091798057628621354486227+1+0.3090169943749474241022934171828190588601545899028814310677243113,-4+0.5) {$3$};
\node at (3.6180339887498948482045868343656381177203091798057628621354486227+3.6180339887498948482045868343656381177203091798057628621354486227+2.6180339887498948482045868343656381177203091798057628621354486227+1+0.3090169943749474241022934171828190588601545899028814310677243113,-4+1+0.3090169943749474241022934171828190588601545899028814310677243113) {$4$};

\end{tikzpicture}
\caption{The total orders between the tiles of 1-supertiles and 2-supertiles of $\nu$.}
\label{Figure_Total_Order_in_tiles_in_the_1_and_2_supertiles_of_W_FxF}
\end{figure}
\subsection{Partitions of the Unit Square by $\nu$}
\label{Section_Fibonacci_SFC_Subsection_Partitions_of_the_Unit_Square}
Consider the sequence $\{\lambda^{-k-1}\cdot\nu^k(p_a):\ k=0,1,2,\dots\}$ of partitions of the unit square, with the convention $\nu^0(p_a):=p_a$. Observe that $\lambda^{-k-1}\cdot\left(\sss\nu^k(p_a)\right)=[0,1]\times[0,1]$ for every $k\in\NN$. In particular, each partition is a scaled copy of a supertile of $\nu$. Thus, the total orders introduced in Section \ref{Section_Total_Order} can be transferred over the rectangle tiles within the partitions. We use these total orders in order to label the tiles appearing in the partitions. 

The partitions $\lambda^{-1}\cdot p_a$, $\lambda^{-2}\cdot\nu(p_a)$ and $\lambda^{-3}\cdot\nu^2(p_a)$ are demonstrated in Figure \ref{Figure_Partitions_of_the_Unit_Sqaure_First_Second_Iterations}. For every $k\in\NN$, $\lambda^{-k-1}\cdot\nu^k(p_a)$, consists of $\cF_{k+2}^2$ rectangle tiles, where $\cF_{k+2}$ is the $(k+2)$-th Fibonacci number. 
Denote these tiles by $\cJ_{k}^{1}, \dots, \cJ_{k}^{\cF_{k+2}^2}$ such that
\begin{enumerate}
    \item[(i)] $\bigcup\limits_{i=1}^{\cF_{k+2}^2}\lambda^{k+1}\cdot \cJ_{k}^{i}=\nu^k(p_a)$,
    \item[(ii)] $\lambda^{k+1}\cdot \cJ_{k}^{i}\underset{a,k}{\lesssim}\lambda^{k+1}\cdot\cJ_{k}^{j}$ if and only if $i\leq j$, for every $i,j\in\{1,\dots,\cF_{k+2}^2\}$. 
\end{enumerate}
The tiles $\cJ_{0}^1$, $\cJ_{1}^1, \dots, \cJ_{1}^4$ and $\cJ_{2}^1, \dots, \cJ_{2}^9$ are demonstrated in Figure \ref{Figure_Labelling_the_Rectangles_in_the_partititons_of_the_unit_square}. Notice that the tiles are labelled with respect to the total order structure presented on the leftmost side of Figure \ref{Figure_Total_Order_in_tiles_in_the_1_and_2_supertiles_of_W_FxF}.

\begin{figure}[H]
\centering
\begin{tikzpicture}[scale=0.5]
\draw(0-10,0) rectangle (6-10,6);
\node[anchor=north] at (0-10,0) {$(0,0)$};
\node[anchor=north] at (6-10,0) {$(1,0)$};
\node[anchor=south] at (0-10,6) {$(0,1)$};
\node[anchor=south] at (6-10,6) {$(1,1)$};
\foreach \index in {0,10}{
\draw (0+\index,0) rectangle (6+\index,6);
\draw (\index+6/1.6180339887498948482045868343656381177203091798057628621354486227,0) -- (\index+6/1.6180339887498948482045868343656381177203091798057628621354486227,6);
\draw (\index,6/1.6180339887498948482045868343656381177203091798057628621354486227) -- (\index+6,6/1.6180339887498948482045868343656381177203091798057628621354486227);
\node[anchor=north] at (0+\index,0) {$(0,0)$};
\node[anchor=north] at (6+\index,0) {$(1,0)$};
\node[anchor=south] at (0+\index,6) {$(0,1)$};
\node[anchor=south] at (6+\index,6) {$(1,1)$};
}
\draw (10+6/2.6180339887498948482045868343656381177203091798057628621354486227,0) -- (10+6/2.6180339887498948482045868343656381177203091798057628621354486227,6);
\draw (10,6/2.6180339887498948482045868343656381177203091798057628621354486227) -- (10+6,6/2.6180339887498948482045868343656381177203091798057628621354486227);
\end{tikzpicture}
\caption{$\lambda^{-1}\cdot p_a$, $\lambda^{-2}\cdot \nu(p_a)$, $\lambda^{-3}\cdot \nu^2(p_a)$, from left to right.}
\label{Figure_Partitions_of_the_Unit_Sqaure_First_Second_Iterations}
\end{figure}
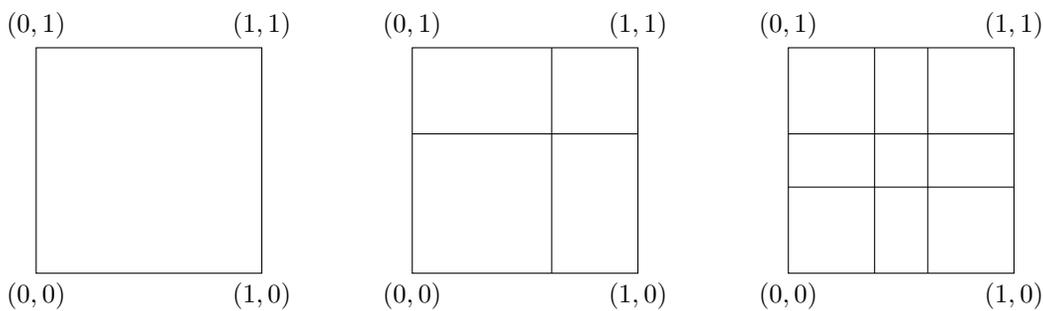
\begin{figure}[H]
\centering
\begin{tikzpicture}[scale=0.5]

\draw(0-10,0) rectangle (6-10,6);
\node[scale=1.5] at (3-10,3) {$\cJ_{0}^1$};
\node[anchor=north] at (0-10,0) {$(0,0)$};
\node[anchor=north] at (6-10,0) {$(1,0)$};
\node[anchor=south] at (0-10,6) {$(0,1)$};
\node[anchor=south] at (6-10,6) {$(1,1)$};

\foreach \index in {0,10}{
\draw (0+\index,0) rectangle (6+\index,6);
\draw (\index+6/1.6180339887498948482045868343656381177203091798057628621354486227,0) -- (\index+6/1.6180339887498948482045868343656381177203091798057628621354486227,6);
\draw (\index,6/1.6180339887498948482045868343656381177203091798057628621354486227) -- (\index+6,6/1.6180339887498948482045868343656381177203091798057628621354486227);
\node[anchor=north] at (0+\index,0) {$(0,0)$};
\node[anchor=north] at (6+\index,0) {$(1,0)$};
\node[anchor=south] at (0+\index,6) {$(0,1)$};
\node[anchor=south] at (6+\index,6) {$(1,1)$};
}
\draw (10+6/2.6180339887498948482045868343656381177203091798057628621354486227,0) -- (10+6/2.6180339887498948482045868343656381177203091798057628621354486227,6);
\draw (10,6/2.6180339887498948482045868343656381177203091798057628621354486227) -- (10+6,6/2.6180339887498948482045868343656381177203091798057628621354486227);

\node[scale=1.5] at (3/1.6180339887498948482045868343656381177203091798057628621354486227,3/1.6180339887498948482045868343656381177203091798057628621354486227) {$\cJ_{1}^1$};
\node[scale=1.5] at (3/1.6180339887498948482045868343656381177203091798057628621354486227,3*1.6180339887498948482045868343656381177203091798057628621354486227) {$\cJ_{1}^2$};
\node[scale=1.5] at (3*1.6180339887498948482045868343656381177203091798057628621354486227,3/1.6180339887498948482045868343656381177203091798057628621354486227) {$\cJ_{1}^3$};
\node[scale=1.5] at (3*1.6180339887498948482045868343656381177203091798057628621354486227,3*1.6180339887498948482045868343656381177203091798057628621354486227) {$\cJ_{1}^4$};

\node[scale=1] at (10+3/2.6180339887498948482045868343656381177203091798057628621354486227,3/2.6180339887498948482045868343656381177203091798057628621354486227) {$\cJ_{2}^1$};
\node[scale=1] at (10+3/2.6180339887498948482045868343656381177203091798057628621354486227,3) {$\cJ_{2}^2$};
\node[scale=1] at (10+3,3/2.6180339887498948482045868343656381177203091798057628621354486227) {$\cJ_{2}^3$};
\node[scale=1] at (10+3,3) {$\cJ_{2}^4$};
\node[scale=1] at (10+3/2.6180339887498948482045868343656381177203091798057628621354486227,3*1.6180339887498948482045868343656381177203091798057628621354486227) {$\cJ_{2}^5$};
\node[scale=1] at (10+3,3*1.6180339887498948482045868343656381177203091798057628621354486227) {$\cJ_{2}^6$};
\node[scale=1] at (10+3*1.6180339887498948482045868343656381177203091798057628621354486227,3/2.6180339887498948482045868343656381177203091798057628621354486227) {$\cJ_{2}^7$};
\node[scale=1] at (10+3*1.6180339887498948482045868343656381177203091798057628621354486227,3) {$\cJ_{2}^8$};
\node[scale=1] at (10+3*1.6180339887498948482045868343656381177203091798057628621354486227,3*1.6180339887498948482045868343656381177203091798057628621354486227) {$\cJ_{2}^9$};
\end{tikzpicture}
\caption{}
\label{Figure_Labelling_the_Rectangles_in_the_partititons_of_the_unit_square}
\end{figure}

\subsection{A Construction of a Cantor Set by $\nu$}
\label{Section_Fibonacci_SFC_Subsection_A_construction_of_the_Cantor_Set}
Consider the iteration rules shown in Figure \ref{Figure_The_Function_Psi}. The rules are defined for $4$ different interval types of arbitrary length and demonstrate how to subdivide each interval type. Start with the interval $S_0=[0,1]$ attached with label $a$. Iterating it according to the given rules generates $4$ subintervals with distinct labels $a,b,c,d$, respectively. Let $S_1$ denote the union of those $4$ subintervals. For each generated subinterval, apply the rules in Figure \ref{Figure_The_Function_Psi} and denote the union of those subintervals as $S_2$, and so on so forth. Observe that the set $\Gamma=\bigcap\limits_{i=0}^{\infty}S_i$ is a cantor set, when we ignore the labels attached to $S_i$'s. The first steps of the construction of $\Gamma$ is shown in Figure \ref{Figure_Construction_Steps_of_the_Cantor_Set}.

\begin{figure}[H]
\centering
\begin{tikzpicture}[scale=1.15]
\centering
\foreach \index in {0, -2, -4, -6}{
\draw (0,0+\index)--(1,0+\index);
\draw (0,-0.2+\index) -- (0,0.2+\index);
\node at (0,-0.5+\index) {$x$};
\draw (1,-0.2+\index) -- (1,0.2+\index);
\node at (1,-0.5+\index) {$y$};
\draw[very thick, ->] (1.25,0+\index) -- (1.75,0+\index);
}

\draw (2,0) -- (2+9/7,0);
\draw (2+2*9/7,0) -- (2+3*9/7,0);
\draw (2+4*9/7,0) -- (2+5*9/7,0);
\draw (2+6*9/7,0) -- (2+7*9/7,0);
\foreach \ind in {0,1,2,3,4,5,6,7}{
\draw (2+\ind*9/7,0.2) -- (2+\ind*9/7,-0.2);
}
\node[anchor=south] at (0.5,0) {$a$};
\node at (2,-0.5) {$x$};
\node at (11,-0.5) {$y$};
\node at (2+9/7,-0.5) {$\frac{6x+y}{7}$};
\node at (2+18/7,-0.5) {$\frac{5x+2y}{7}$};
\node at (2+27/7,-0.5) {$\frac{4x+3y}{7}$};
\node at (2+36/7,-0.5) {$\frac{3x+4y}{7}$};
\node at (2+45/7,-0.5) {$\frac{2x+5y}{7}$};
\node at (2+54/7,-0.5) {$\frac{x+6y}{7}$};
\node[anchor=south] at (2+9/14,0) {$a$};
\node[anchor=south] at (2+9/14+18/7,0) {$b$};
\node[anchor=south] at (2+9/14+36/7,0) {$c$};
\node[anchor=south] at (2+9/14+54/7,0) {$d$};

\draw (2,-2) -- (2+3,-2);
\draw (2+6,-2) -- (11,-2);
\foreach \ind in {0,1,2,3}{
\draw (2+\ind*3,-2+0.2) -- (2+\ind*3,-2+-0.2);
}
\node[anchor=south] at (0.5,-2) {$b$};
\node at (2,-2.5) {$x$};
\node at (11,-2.5) {$y$};
\node at (2+3,-2.5) {$\frac{2x+y}{3}$};
\node at (2+6,-2.5) {$\frac{x+2y}{3}$};
\node[anchor=south] at (2+1.5,-2) {$a$};
\node[anchor=south] at (2+7.5,-2) {$c$};

\draw (2,-4) -- (2+3,-4);
\draw (2+6,-4) -- (11,-4);
\foreach \ind in {0,1,2,3}{
\draw (2+\ind*3,-4+0.2) -- (2+\ind*3,-4+-0.2);
}
\node[anchor=south] at (0.5,-4) {$c$};
\node at (2,-4.5) {$x$};
\node at (11,-4.5) {$y$};
\node at (2+3,-4.5) {$\frac{2x+y}{3}$};
\node at (2+6,-4.5) {$\frac{x+2y}{3}$};
\node[anchor=south] at (2+1.5,-4) {$a$};
\node[anchor=south] at (2+7.5,-4) {$b$};

\draw (2+4.5,-6) -- (11,-6);
\foreach \ind in {1,2}{
\draw (2+\ind*4.5,-6+0.2) -- (2+\ind*4.5,-6+-0.2);
}
\node[anchor=south] at (0.5,-6) {$d$};
\node at (11,-6.5) {$y$};
\node at (2+4.5,-6.5) {$\frac{x+y}{2}$};
\node[anchor=south] at (2+4.5+2.25,-6) {$a$};

\end{tikzpicture}
\caption{}
\label{Figure_The_Function_Psi}
\end{figure}


\begin{figure}[h]
\centering

\begin{tikzpicture}
\draw (0,0) -- (14,0);
\draw(0,0.2) -- (0,-0.2);
\draw(14,0.2) -- (14,-0.2);

\draw(0,0-2) -- (2,0-2);
\draw(4,0-2) -- (6,0-2);
\draw(8,0-2) -- (10,0-2);
\draw(12,0-2) -- (14,0-2);

\draw(0,0-4) -- (2/7,0-4);
\draw(4/7,0-4) -- (6/7,0-4);
\draw(8/7,0-4) -- (10/7,0-4);
\draw(12/7,0-4) -- (14/7,0-4); 

\draw(4,0-4) -- (4+2/3,0-4);
\draw(4+4/3,0-4) -- (4+2,0-4);

\draw(8,0-4) -- (8+2/3,0-4);
\draw(8+4/3,0-4) -- (10,0-4);

\draw(13,0-4) -- (14,0-4);

\foreach \in in {0,2,4,6,8,10,12,14}{
\draw(\in,0.2-2) -- (\in,-0.2-2);
}

\foreach \inx in {0,2/7,4/7,6/7,8/7,10/7,12/7,2,4,4+2/3,4+4/3,6,8,8+2/3,8+4/3,10,13,14}{
\draw(\inx,0.2-4) -- (\inx,-0.2-4);
}
\filldraw (7,-5) circle (1pt) node{};
\filldraw (7,-5.5) circle (1pt) node{};
\filldraw (7,-6) circle (1pt) node{};

\foreach \innn in {0, -2, -4}{
\node at (0,\innn+-0.5) {$0$};
\node at (14,\innn+-0.5) {$1$};
}

\end{tikzpicture}
\caption{Construction steps of $\Gamma$}
\label{Figure_Construction_Steps_of_the_Cantor_Set}
\end{figure}
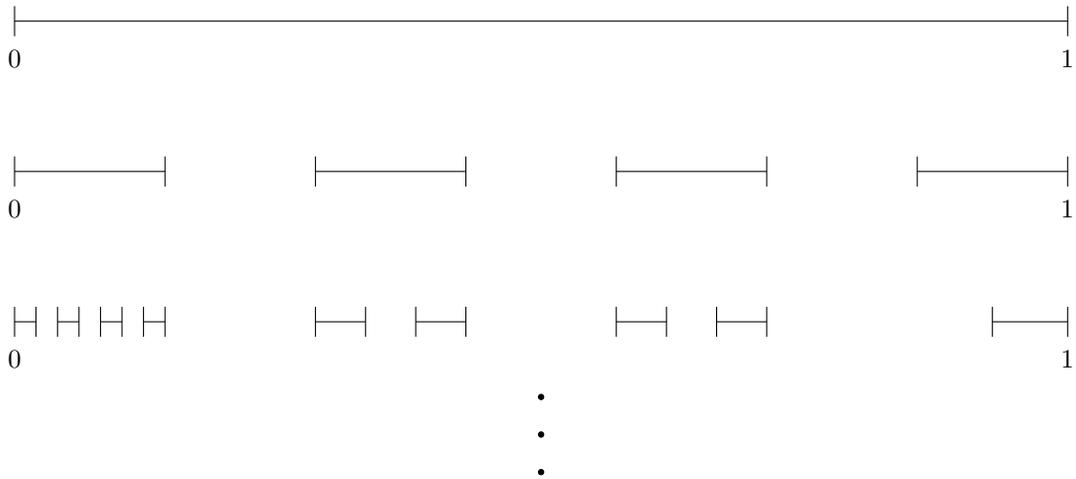

\subsection{A Space-Filling Curve by $\nu$}

Notice that there exist $\cF_{k+2}^2$ many disjoint intervals in the n-th step\footnote{With the convention that [0,1] is the 0-th step.} of the construction of $\Gamma$, for every $n\in\NN$. Denote these intervals by $\cI_{k}^{1}, \dots, \cI_{k}^{\cF_{k+2}^2}$, from left to right respectively. The intervals $\cI_{0}^1$, $\cI_{1}^1, \dots, \cI_{1}^4$ and $\cI_{2}^1, \dots, \cI_{2}^9$ are demonstrated in Figure \ref{Figure_Construction_Steps_of_the_Cantor_Set_0}.

\begin{figure}[ht]
\centering

\begin{tikzpicture}
\draw (0,0) -- (14,0);
\draw(0,0.2) -- (0,-0.2);
\draw(14,0.2) -- (14,-0.2);

\draw(0,0-2) -- (2,0-2);
\draw(4,0-2) -- (6,0-2);
\draw(8,0-2) -- (10,0-2);
\draw(12,0-2) -- (14,0-2);

\draw(0,0-4) -- (2/7,0-4);
\draw(4/7,0-4) -- (6/7,0-4);
\draw(8/7,0-4) -- (10/7,0-4);
\draw(12/7,0-4) -- (14/7,0-4); 

\draw(4,0-4) -- (4+2/3,0-4);
\draw(4+4/3,0-4) -- (4+2,0-4);

\draw(8,0-4) -- (8+2/3,0-4);
\draw(8+4/3,0-4) -- (10,0-4);

\draw(13,0-4) -- (14,0-4);

\foreach \in in {0,2,4,6,8,10,12,14}{
\draw(\in,0.2-2) -- (\in,-0.2-2);
}

\foreach \inx in {0,2/7,4/7,6/7,8/7,10/7,12/7,2,4,4+2/3,4+4/3,6,8,8+2/3,8+4/3,10,13,14}{
\draw(\inx,0.2-4) -- (\inx,-0.2-4);
}

\foreach \innn in {0, -2, -4}{
\node at (0,\innn+-0.5) {$0$};
\node at (14,\innn+-0.5) {$1$};
}

\node at (7,0.5) {$\cI_0^1$};

\node at (1,0.5-2) {$\cI_1^1$};
\node at (5,0.5-2) {$\cI_1^2$};
\node at (9,0.5-2) {$\cI_1^3$};
\node at (13,0.5-2) {$\cI_1^4$};

\node[scale=0.7] at (1/7,0.5-4) {$\cI_2^1$};
\node[scale=0.7] at (5/7,0.5-4) {$\cI_2^2$};
\node[scale=0.7] at (9/7,0.5-4) {$\cI_2^3$};
\node[scale=0.7] at (13/7,0.5-4) {$\cI_2^4$};

\node[scale=0.7] at (4+1/3,0.5-4) {$\cI_2^5$};
\node[scale=0.7] at (4+5/3,0.5-4) {$\cI_2^6$};

\node[scale=0.7] at (8+1/3,0.5-4) {$\cI_2^7$};
\node[scale=0.7] at (8+5/3,0.5-4) {$\cI_2^8$};

\node[scale=0.7] at (13.5,0.5-4) {$\cI_2^9$};

\end{tikzpicture}
\caption{}
\label{Figure_Construction_Steps_of_the_Cantor_Set_0}
\end{figure}

There exist bijective correspondences between the intervals appearing in the construction steps of $\Gamma$, first three of which are illustrated in Figure \ref{Figure_Construction_Steps_of_the_Cantor_Set_0}, and the rectangles in the partitions of the unit square demonstrated in Section \ref{Section_Fibonacci_SFC_Subsection_Partitions_of_the_Unit_Square}. Precisely, for each $n\in\NN$, $f_n:\{\cI^k_n:\ k=1,\dots,\cF_{n+2}^2\}\mapsto \{\cJ^k_n:\ k=1,\dots,\cF_{n+2}^2\}$ defined by $f_n(\cI^k_n)=\cJ^k_n$ for $k=1,\dots,\cF_{n+2}^2$, is a bijection. Using this bijective correspondence, we define a space filling curve as follows.

Let $x\in\Gamma$ be fixed. For every $n\in\NN$, there exists unique $k_n\in\{1,\dots,\cF_{n+2}^2\}$ such that $x\in\cI_{n}^{k_n}$. In fact, $\{x\}=\bigcap\limits_{n=1}^{\infty}\cI_n^{k_n}$ by Cantor's intersection theorem. Similarly, $\bigcap\limits_{n=1}^{\infty}\cJ_{n}^{k_n}=\{y\}$ for some unique $y\in[0,1]\times[0,1]$ due to Cantor's intersection theorem. That is, for each fixed $x\in\Gamma$, there is a unique $y\in[0,1]\times[0,1]$. This induces a map $f:\Gamma\mapsto[0,1]\times[0,1]$ such that $f(x)=y$ where $\{x\}=\bigcap\limits_{n=1}^{\infty}\cI_n^{k_n}$ and $\{y\}=\bigcap\limits_{n=1}^{\infty}\cJ_{n}^{k_n}$.

\paragraph{$f$\ is\ surjective:} For any $y\in[0,1]\times[0,1]$, choose a sequence $m_n$ so that $\{y\}=\bigcap\limits_{n=1}^{\infty}\cJ_{n}^{m_n}$. Note that such sequence exists but not necessarily unique. Notice also that $\bigcap\limits_{n=1}^{\infty}\cI_{n}^{m_n}$ is a unique point. In fact, $f(\bigcap\limits_{n=1}^{\infty}\cI_{n}^{m_n})=y$.

\paragraph{$f$\ is\ continuous:} 
For each $n\in\ZZ^+$, define the following:
\begin{align*}
g_n & = \max\limits_{j\in\{1,\dots,\cF_{n+2}^2\}}\left\{l(\cI^j_n):\ l(\cI^j_n)\ \mathrm{denotes\ the\ length\ of\ }\cI^j_n\right\}, \\
h_n & = \max\limits_{j\in\{1,\dots,\cF_{n+2}^2\}}\left\{diam(\cJ^j_n):\ diam(\cJ^j_n)\ \mathrm{denotes\ the\ diameter\ of\ }\cJ^j_n\right\}.
\end{align*}
We have $g_n\downarrow0$ and $h_n\downarrow0$ as $n\to\infty$. Let $x\in\Gamma$ be fixed and let $\epsilon>0$ be given. Choose $N\in\ZZ^+$ sufficiently large so that $h_N<\epsilon$. Set $\delta=g_N/2$. If $y\in\Gamma$ with $|x-y|<\delta$ then $x,y\in\cI_{N}^{j_0}$ for some $j_0\in\{1,\dots,\cF_{N+2}^2\}$. That is, $f(x),f(y)\in\cJ_N^{j_0}$ and $||f(x)-f(y)||<\epsilon$. Thus, $f$ is continuous.

Since $f:\Gamma\mapsto[0,1]\times[0,1]$ is a continuous surjection, it extends to a space-filling curve $F:[0,1]\mapsto[0,1]\times[0,1]$ by linear interpolation (See \cite{Journal_Sagan_1} for details of such process).

\paragraph{A Geometrisation of $F$}
For each $n\in\ZZ^+$, puncture the centre of every rectangle in $\lambda^{-n-1}\cdot\nu^n(p_a)$. Denote these points by $x(\cJ^j_n)$ for $j=1,\dots,\cF_{n+2}^2$. Join the punctures $x(\cJ^1_n), x(\cJ^2_n), \dots, x(\cJ^{\cF_{n+2}^2}_n)$ with straight lines, respectively. The constructed directed curve is called the \emph{n-th\ approximant\ of\ $F$}. First four approximants of $F$ are shown in Figure \ref{ff}.

\begin{figure}[ht]
\centering
\begin{subfigure}[b]{0.18\textwidth}
\centering
\tikz[remember picture]\node[inner sep=0pt,outer sep=0pt] (a){\includegraphics[width=\linewidth]{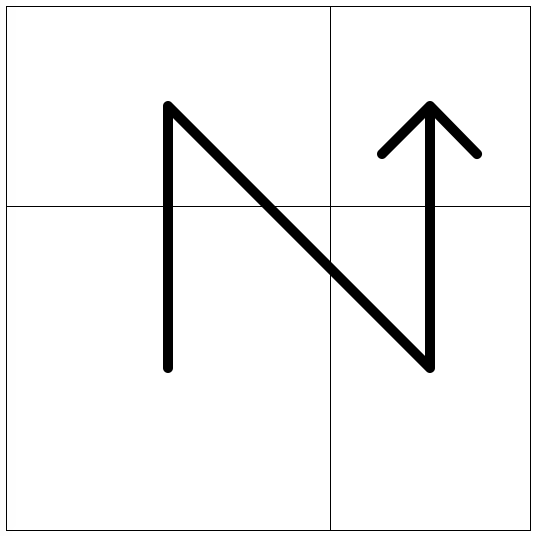}};
\end{subfigure}
\hfill
\begin{subfigure}[b]{0.18\textwidth}
\centering
\tikz[remember picture]\node[inner sep=0pt,outer sep=0pt] (b){\includegraphics[width=\linewidth]{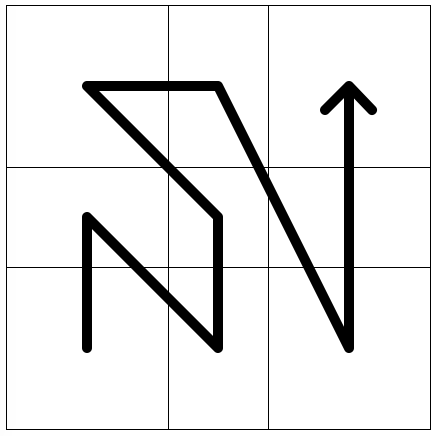}};
\end{subfigure}
\hfill
\begin{subfigure}[b]{0.18\textwidth}
\centering
\tikz[remember picture]\node[inner sep=0pt,outer sep=0pt] (c){\includegraphics[width=\linewidth]{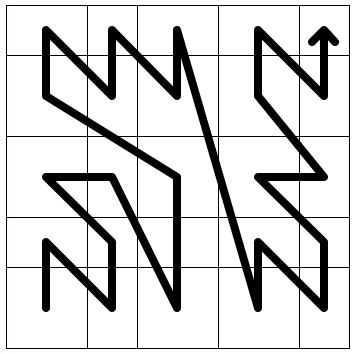}};
\end{subfigure}
\hfill
\begin{subfigure}[b]{0.18\textwidth}
\centering
\tikz[remember picture]\node[inner sep=0pt,outer sep=0pt] (d){\includegraphics[width=\linewidth]{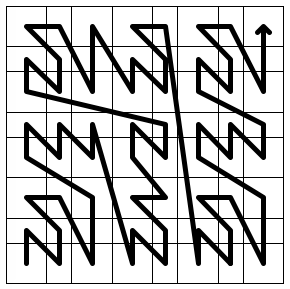}};
\end{subfigure}
\tikz[remember picture,overlay]\draw[line width=2pt,-stealth] ([xshift=5pt]a.east) -- ([xshift=30pt]a.east)node[midway,above,text=black,font=\LARGE\bfseries\sffamily] {};
\tikz[remember picture,overlay]\draw[line width=2pt,-stealth] ([xshift=5pt]b.east) -- ([xshift=30pt]b.east)node[midway,above,text=black,font=\LARGE\bfseries\sffamily] {};
\tikz[remember picture,overlay]\draw[line width=2pt,-stealth] ([xshift=5pt]c.east) -- ([xshift=30pt]c.east)node[midway,above,text=black,font=\LARGE\bfseries\sffamily] {};
\caption{First four approximants of $F$}
\label{ff}
\end{figure}

\begin{figure}[H]
\centering
\includegraphics[width=\textwidth]{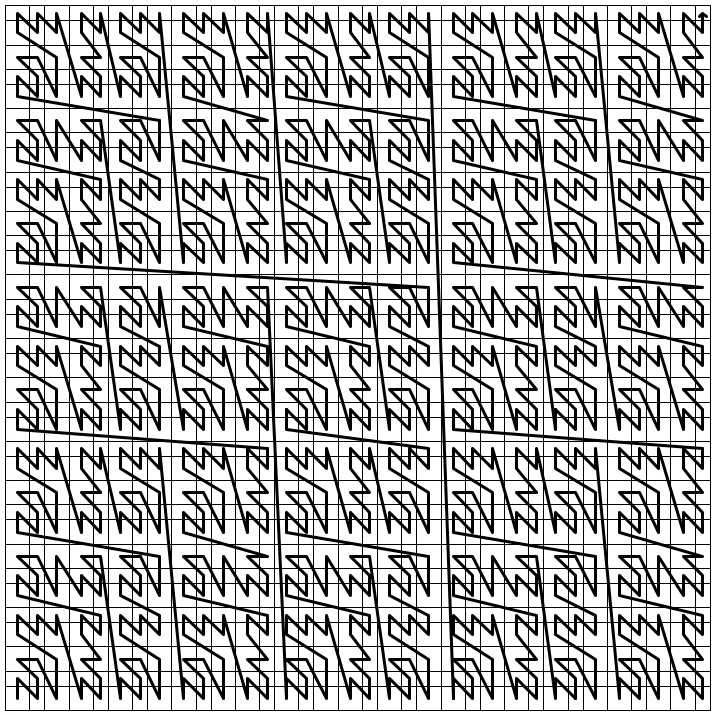}
\caption{7th approximant of $F$}
\label{ff_7th}
\end{figure}

\section{A Space-Filling Curve Generator Algorithm}\label{Section_proof_of_the_main_result}

In this section we generalise the argument presented in Section \ref{Section_Background}.

\begin{theorem}\label{main_theorem}
Let $\cP$ be a given finite collection of tiles in $\RR^2$. Suppose $\omega$ is a substitution defined over $\cP$ such that $\max\{diam(t):\ t\in\lambda^{-n}\omega^n(p),\ p\in\cP\}\downarrow 0$ as $n\to\infty$, where $diam(t)$ denotes the diameter of $\sss t$ and $\lambda$ denotes the expansion factor of $\omega$. Then for each $p\in\cP$ there exists a Cantor set $\Gamma_{\omega,p}\subseteq[0,1]$ and a continuous surjection $f_{\omega,p}:\Gamma_{\omega,p}\mapsto \sss p$ such that $f_{\omega,p}$ extends to a space-filling curve $F_{\omega,p}:[0,1]\mapsto\sss p$ by linear interpolation.
\end{theorem}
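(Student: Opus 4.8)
The plan is to reproduce, for an arbitrary finite $\cP$ satisfying the diameter hypothesis, the construction carried out for $\nu$ in Section \ref{Section_Background}. Fix $p\in\cP$. First I would equip each $1$-supertile $\omega(q)$, $q\in\cP$, with an arbitrary total order $\lesssim_{q,1}$ of its tiles, and then propagate these to total orders $\lesssim_{q,n}$ on every $n$-supertile $\omega^n(q)$ by the same inductive rule as in \eqref{a}--\eqref{d}: tiles lying in different children are ordered according to the order of their parents, and tiles in a common child keep the order they had one level down. Because $\sss\omega^n(q)=\lambda^n\,\sss q$, the rescaled supertiles $\lambda^{-n}\omega^n(p)$ form a sequence of partitions of $\sss p$ into scaled tiles, to which these orders transfer; I label the tiles of the $n$-th partition as $\cJ_n^{1}\lesssim\cdots\lesssim\cJ_n^{N_n}$, where $N_n$ is the number of tiles of $\omega^n(p)$. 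The one fact I must record here is the \emph{refinement compatibility}: each $\cJ_n^{k}$ is subdivided at level $n+1$ into a block of consecutive tiles $\cJ_{n+1}^{i},\dots,\cJ_{n+1}^{j}$, and whole blocks are ordered as their parents are; this is immediate from clauses (i)--(ii) of the inductive definition.

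Next I would build the Cantor set $\Gamma_{\omega,p}\subseteq[0,1]$ abstractly, so as to mirror this combinatorics. Starting from $S_0=[0,1]$, I replace each level-$n$ closed interval $\cI_n^{k}$ by exactly $M(\cJ_n^{k})$ disjoint closed subintervals separated by nonempty gaps, where $M(\cJ_n^{k})$ is the number of subtiles produced when the corresponding tile is substituted; choosing the subinterval lengths to shrink by a fixed factor at each stage guarantees $g_n:=\max_k|\cI_n^{k}|\downarrow0$. Setting $\Gamma_{\omega,p}=\bigcap_n S_n$ gives a nonempty compact set that is totally disconnected because $g_n\to0$. The delicate point is \emph{perfectness}: an interval with a single child does not immediately create new limit points. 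Here I would use the hypothesis together with the finiteness of $\cP$: since $\sss\omega^n(q)=\lambda^n\sss q$ has area $\lambda^{2n}\,\mathrm{area}(\sss q)\to\infty$ while every tile is a translate of one of finitely many bounded tiles, no branch can remain a single tile forever, and in fact every tile splits into at least two after a uniformly bounded number of substitutions. Consequently no point of $\Gamma_{\omega,p}$ is isolated, and $\Gamma_{\omega,p}$ is a Cantor set.

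With both towers in place I would define $f_{\omega,p}$ exactly as in Section \ref{Section_Background}. The bijections $f_n(\cI_n^{k})=\cJ_n^{k}$ respect the refinement, so each $x\in\Gamma_{\omega,p}$ determines a unique nested sequence $\cI_n^{k_n}\ni x$, and I set $f_{\omega,p}(x)=y$ where $\{y\}=\bigcap_n\sss\cJ_n^{k_n}$. Uniqueness of $y$ is precisely Cantor's intersection theorem applied to the nested nonempty compacta $\sss\cJ_n^{k_n}$, whose diameters tend to $0$ \emph{by the standing hypothesis} $h_n:=\max_k\operatorname{diam}(\cJ_n^{k})\downarrow0$; this is the only place the hypothesis is essential. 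Surjectivity follows since any $y\in\sss p$ lies in a nested sequence of partition tiles (compatible by refinement), whose preimage intervals meet in a point mapping to $y$. Continuity is the same $\epsilon$--$\delta$ argument as before: given $\epsilon$ pick $N$ with $h_N<\epsilon$ and let $\delta$ be the minimal gap between distinct level-$N$ intervals, so that $|x-x'|<\delta$ forces $x,x'$ into a common $\cI_N^{j_0}$, whence $f_{\omega,p}(x),f_{\omega,p}(x')\in\cJ_N^{j_0}$ and $\|f_{\omega,p}(x)-f_{\omega,p}(x')\|\le h_N<\epsilon$.

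Finally I would extend $f_{\omega,p}$ to $F_{\omega,p}\colon[0,1]\to\sss p$ by interpolating linearly across each gap of $\Gamma_{\omega,p}$, joining the images of the two endpoints by a segment; continuity on all of $[0,1]$ follows from continuity of $f_{\omega,p}$ and $h_n\to0$ at the gap endpoints, as in \cite{Journal_Sagan_1}, and $F_{\omega,p}$ is space-filling because its image already contains the positive-area set $f_{\omega,p}(\Gamma_{\omega,p})=\sss p$. I expect two steps to require the most care. The first is the perfectness of $\Gamma_{\omega,p}$, i.e.\ ruling out isolated points coming from tiles that substitute to a single tile, which is where the area-growth argument is needed. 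The second is ensuring the interpolating segments actually remain in $\sss p$: this is automatic when $\sss p$ is convex (as for the unit square), while for a non-convex support one only gets $F_{\omega,p}([0,1])\supseteq\sss p$, so the map is still a bona fide space-filling curve but its image may exceed $\sss p$ unless the ordering is chosen to keep consecutive tiles adjacent.
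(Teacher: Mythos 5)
Your proposal is correct and follows essentially the same route as the paper: arbitrary total orders on the $1$-supertiles propagated inductively to $n$-supertiles, a gapped interval tower mirroring the supertile combinatorics, Cantor's intersection theorem applied to both nested systems, the same $\epsilon$--$\delta$ continuity argument, and linear interpolation across the gaps following Sagan. Three of your refinements are worth recording, since each is more careful than the paper's text. First, the paper simply assumes ``without loss of generality $|\omega(q)|>1$ for all $q\in\cP$'' (and Step~1 of the algorithm presupposes a $k$ with $|\omega^k(q)|>1$ exists, without proof); your area-growth argument --- $\mathrm{area}(\sss\omega^n(q))=\lambda^{2n}\,\mathrm{area}(\sss q)$ is unbounded while translates of the finitely many tiles in $\cP$ have bounded area, so every branch splits within a uniformly bounded number of steps --- is exactly the missing justification, and invariance of domain gives the needed positive lower bound on tile areas. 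Second, your choice of $\delta$ as the \emph{minimal gap} between distinct level-$N$ intervals is actually a correction of the paper, which sets $\delta=g_{p,N_p}/2$ with $g_{p,N_p}$ the \emph{maximal} interval length: when tiles with very different branching numbers $|\omega(q)|$ coexist, a short interval's adjacent gap can be smaller than half the longest interval's length, so two points within the paper's $\delta$ need not lie in a common $\cI_{p,N_p}^{j_0}$; the paper's examples happen to avoid this, but the general argument requires your choice. Third, your closing caveat is a genuine subtlety the paper glosses over by citing Sagan (whose setting is the convex unit square): the interpolating chords join fixed points in consecutive tiles and need not stay inside a non-convex $\sss p$, so the claimed codomain $F_{\omega,p}:[0,1]\mapsto\sss p$ is only literally correct for convex supports (or under an adjacency condition on the ordering, as in condition $(4)$ of Proposition~\ref{fractal_prop}); in general one gets a continuous curve whose image \emph{contains} $\sss p$, which still yields a space-filling curve but not the stated codomain.
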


\begin{proof}
Let $\cP$ be a finite collection of tiles. Suppose $\omega:\cP\mapsto\cP^*$ is a substitution with an expansion factor $\lambda>1$ such that $\max\{diam(t):\ t\in\lambda^{-n}\omega^n(q),\ q\in\cP\}\downarrow 0$ as $n\to\infty$. Assume without loss of generality 
$|\omega(q)|>1$ for all $q\in\cP$.

Define a bijection $g_q:\omega(q)\mapsto\{1,\dots,|\omega(q)|\}$, for every $q\in\cP$. For $q\in\cP$, $g_q$ defines a total order $\underset{q,1}{\lesssim}$ between the tiles of $\omega(q)$ such that $x\underset{q,1}{\lesssim}y$ whenever $g_q(x)\leq g_q(y)$, for $x,y\in\omega(q)$. Extend these total orders to the supertiles of $\omega$ inductively such that
\begin{enumerate}
    \item[(i)]  If $x,y\in\omega^{n-1}(t)$ for some $t\in \omega(q)$ and $n\in\ZZ^+\backslash\{1\}$, then $x\underset{q,n}{\lesssim}y$ whenever $x\underset{q,n-1}{\lesssim}y$.
    \item[(ii)]  If $x\in\omega^{n-1}(t_1)$ and $y\in\omega^{n-1}(t_2)$ for some distinct $t_1,t_2\in \omega(q)$ and $n\in\ZZ^+\backslash\{1\}$, then $x\underset{q,n}{\lesssim}y$ whenever $t_1\underset{q,1}{\lesssim}t_2$.
\end{enumerate}
The total order $\underset{q,n}{\lesssim}$ for $q\in\cP$ and $n\in\ZZ^+$, can be transferred over the scaled patch $\lambda^{-n}\cdot \omega^n(q)$. Precisely, for each $q\in\cP$ and $n\in\ZZ^+$, label the scaled tiles in $\lambda^{-n}\cdot \omega^n(q)$ by $\cJ^1_{q,n}, \cJ^2_{q,n},\dots,\cJ^{|\omega^n(q)|}_{q,n}$ such that $\cJ_{q,n}^i\underset{q,n}{\lesssim}\cJ_{q,n}^j$ if and only if $i\leq j$. Next we construct a Cantor set. 

For each $q\in\cP$, define a subdivision rule by partitioning a given interval of random length into $2.|w(q)|-1$ many equal length subintervals and removing the even indexed subintervals as shown in Figure \ref{label_subdivision__} and Figure \ref{label_subdivision__2}, respectively.

\begin{figure}[h]
\centering
\begin{tikzpicture}
\draw (0,0) -- (2,0);
\draw (4,0) -- (14,0);
\draw(0,0.2) -- (0,-0.2);
\draw(2,0.2) -- (2,-0.2);

\draw[very thick, ->] (2.5,0) -- (3.5,0);

\foreach \inx in {0,1,2,3,14,15}{
\draw(4+\inx*2/3,0.2) -- (4+\inx*2/3,-0.2);
}

\filldraw (7,0.5) circle (1pt) node{};
\filldraw (8,0.5) circle (1pt) node{};
\filldraw (9,0.5) circle (1pt) node{};

\foreach \innn in {0}{
\node at (1,\innn+0.5) {$I$};
\node at (4+1/3,\innn+0.5) {$I_1$};
\node at (4+1/3+2/3,\innn+0.5) {$I_2$};
\node at (4+1/3+4/3,\innn+0.5) {$I_3$};
\node at (14+1/3,\innn+0.5) {$I_{2.|w(q)|-1}$};
\node at (0,\innn-0.5) {$x$};
\node at (2,\innn-0.5) {$y$};
\node at (4,\innn-0.5) {$x$};
\node at (14,\innn-0.5) {$y$};
}

\end{tikzpicture}
\caption{$I$ is dissected into $2.|w(q)|-1$ many closed subintervals of length $(2.|w(q)|-1)^{-1}$.}
\label{label_subdivision__}
\end{figure}
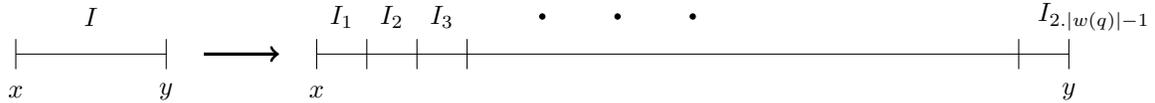

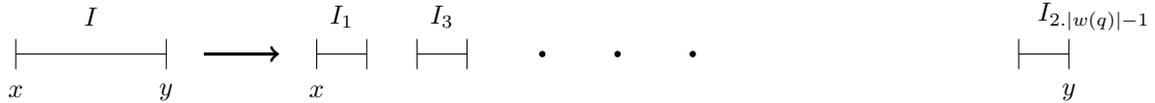
\begin{figure}[h]
\centering
\begin{tikzpicture}
\draw (0,0) -- (2,0);

\draw (4,0) -- (4+2/3,0);
\draw (4+4/3,0) -- (4+2,0);

\draw (14-2/3,0) -- (14,0);

\draw(0,0.2) -- (0,-0.2);
\draw(2,0.2) -- (2,-0.2);

\draw[very thick, ->] (2.5,0) -- (3.5,0);

\foreach \inx in {0,1,2,3,14,15}{
\draw(4+\inx*2/3,0.2) -- (4+\inx*2/3,-0.2);
}

\filldraw (7,0) circle (1pt) node{};
\filldraw (8,0) circle (1pt) node{};
\filldraw (9,0) circle (1pt) node{};

\foreach \innn in {0}{
\node at (1,\innn+0.5) {$I$};
\node at (4+1/3,\innn+0.5) {$I_1$};
\node at (4+1/3+4/3,\innn+0.5) {$I_3$};
\node at (14+1/3,\innn+0.5) {$I_{2.|w(q)|-1}$};
\node at (0,\innn-0.5) {$x$};
\node at (2,\innn-0.5) {$y$};
\node at (4,\innn-0.5) {$x$};
\node at (14,\innn-0.5) {$y$};
}

\end{tikzpicture}
\caption{Even number indexed subintervals are removed.}
\label{label_subdivision__2}
\end{figure}
Next, add labels to intervals $I,I_1,I_3,\dots,I_{2.|\omega(q)|-1}$ using $g_q$ as follows:
\[
l(I)=l(q) \quad\mathrm{and}\quad l(I_{2i-1})=l(g_q^{-1}(i))\ \mathrm{for\ }i\in\{1,\dots,|\omega(q)|\}.
\]
This process defines a subdivision rule of intervals with labels. Let $p\in\cP$ be fixed. Start with the interval $[0,1]$ with label $l(p)$. Applying the defined subdivision rules inductively induces a Cantor set $\Gamma_{\omega,p}$, as explained in Section \ref{Section_Fibonacci_SFC_Subsection_A_construction_of_the_Cantor_Set}.

Denote the intervals appearing in the n-th step\footnote{With the convention that [0,1] is the 0-th step.} of the construction of $\Gamma_{\omega,p}$ by $\cI_{p,n}^1,\dots,\cI_{p,n}^{|\omega^n(p)|}$, from left to right respectively. For each $n\in\ZZ^+$, $f_{p,n}:\{\cI^k_{p,n}:\ k=1,\dots,|\omega^n(p)|\}\mapsto \{\cJ^k_{p,n}:\ k=1,\dots,|\omega^n(p)|\}$ defined by $f_{p,n}(\cI^k_{p,n})=\cJ^k_{p,n}$ for $k=1,\dots,|\omega^n(p)|$ is a well-defined bijection.  For each $x\in\Gamma_{\omega,p}$ and $n\in\NN$, there exists $k_{p,n}\in\{1,\dots,|\omega^n(p)|\}$ such that $x\in\cI_{p,n}^{k_{p,n}}$. In particular, $\{x\}=\bigcap\limits_{n=1}^{\infty}\cI_{p,n}^{k_{p,n}}$ by Cantor's intersection theorem. By the same token, there exists $y\in\sss p$ such that $\{y\}=\bigcap\limits_{n=1}^{\infty}\cJ_{p,n}^{k_{p,n}}$. This process induces a surjection $f_{\omega,p}:\Gamma_{\omega,p}\mapsto\sss p$ such that $f_{\omega,p}(x)=y$ where $x$ and $y$ are as defined above. Next we prove that $f_{\omega,p}$ is continuous.

For each $n\in\ZZ^+$, define the following:
\begin{align*}
g_{p,n} & = \max\limits_{j\in\{1,\dots,|\omega^n(p)|\}}\left\{l(\cI^j_{p,n}):\ l(\cI^j_{p,n})\ \mathrm{denotes\ the\ length\ of\ }\cI^j_{p,n}\right\}, \\
h_{p,n} & = \max\limits_{j\in\{1,\dots,|\omega^n(p)|\}}\left\{diam(\cJ^j_{p,n}):\ diam(\cJ^j_{p,n})\ \mathrm{denotes\ the\ diameter\ of\ }\cJ^j_{p,n}\right\}.
\end{align*}
We have that $g_{p,n}\downarrow0$ and $h_{p,n}\downarrow 0$ as $n\to\infty$. Choose $x\in\Gamma_{\omega,p}$ and $\epsilon>0$. Pick $N_p\in\ZZ^+$ sufficiently large so that $h_{p,N_p}<\epsilon$. Set $\delta=\frac{g_{p,N_p}}{2}$. If $y\in\Gamma_{\omega,p}$ with $|x-y|<\delta$ then $x,y\in\cI_{p,N_p}^{j_0}$ for some $j_0\in\{1,\dots,|\omega^{N_p}(p)|\}$. That is, $f(x),f(y)\in\cJ_{p,N_p}^{j_0}$ and $||f(x)-f(y)||<\epsilon$. Thus, $f_{\omega,p}$ is continuous, and extends to a space-filling curve $F_{\omega,p}:[0,1]\mapsto\sss p$ by linear interpolation.
\end{proof}


Theorem \ref{main_theorem} can be regarded as an algorithm. Precisely,
for every finite substitution $\omega:\cP\mapsto\cP^*$ satisfying the condition in Theorem \ref{main_theorem}, and a tile $p\in\cP$, the following steps form a space-filling curve.

$\mathbf{Step - 1:}$ Choose $k\in\ZZ^+$ such that $|\omega^k(q)|>1$ for every $q\in\cP$. 

\begin{remark}
Replace $\omega$ with $\omega^k$ for the following steps. We assume without loss of generality $k=1$ for the following steps. 
\end{remark}

$\mathbf{Step - 2:}$ Define a bijection $g_q:\omega(q)\mapsto\{1,\dots,|\omega(q)|\}$, for all $q\in\cP$. 

$\mathbf{Step - 3:}$ The maps $\{g_q:\ q\in\cP\}$ indicate total orders over the supertiles of $\omega$. Label the scaled tiles in $\lambda^{-n}\cdot\omega^n(p)$ for each $p\in\cP$ and $n\in\ZZ^+$, according to the generated total order structures.

$\mathbf{Step - 4:}$ Construct a Cantor set $\Gamma_{\omega,p}$.

$\mathbf{Step - 5:}$ Define a bijection between the intervals appearing in the n-th construction step of $\Gamma_{\omega,p}$ and the scaled tiles in the collection $\lambda^{-n}\cdot\omega^n(p)$, for each $n\in\ZZ^+$. 

$\mathbf{Step - 6:}$ Construct a continuous surjection $f_{\omega,p}:\Gamma_{\omega,p}\mapsto\sss p$ using the bijective correspondences described in Step - 5.

$\mathbf{Step - 7:}$ Construct a space-filling curve $F_{\omega,p}:[0,1]\mapsto\sss p$ by linear interpolation over $f_{\omega,p}$.
\subsection{Space-Filling Curve Examples}\label{subsection_SFC}
In Section \ref{subsection_SFC} we apply the algorithm induced from Theorem \ref{main_theorem} using some of the known substitutions. The substitutions provided in this section, as well as a vast collection of other substitutions, can be found at \cite{Web_Tiling_Encyclopedia}. The generated space-filling curves are elucidated by their associated approximants (Definition \ref{d_nth-approximant-of-F}). 

\begin{definition}\label{d_nth-approximant-of-F}
Let $F_{\omega,p}$ be a space filling curve constructed by the algorithm in Section \ref{Section_proof_of_the_main_result}. With the same notations in the proof of Theorem \ref{main_theorem}, for each $n\in\ZZ^+$ and $j\in\{1,\dots,|\omega^n(p)|\}$, denote the centre of $\cJ^j_{p,n}$ by $x(\cJ^j_{p.n})$. The directed curve formed by joining the points $x(\cJ^1_{p.n}), x(\cJ^2_{p.n}), \dots, x(\cJ^{|\omega^n(p)|}_{p.n})$ successively is called the \emph{n-th\ approximant\ of\ $F_{\omega,p}$}.
\end{definition}

\begin{example}[Thue-Morse]
Consider the substitution given in Figure \ref{2DTMsubrule}. The substitution is called \emph{2-dimensional\ Thue-Morse\ substitution} (2DTM in short). It is defined over two unit squares with labels $A,B$. The expansion factor for this substitution is $2$. Choose a square tile with label $A$ to input in the algorithm. Define an order structure over the 1-supertiles of 2DTM through the curves depicted in Figure \ref{2DTMsubrule_Lebesgue}, according to which tile is visited first by the curves. The associated orders are described by the numbers attached to the tiles in Figure \ref{2DTMsubrule_with_curves}\footnote{For the rest of the examples we explain total order structures through directed curves alone. The associated total orders are defined according to which tile is visited first}. Then the space-filling curve generated by the algorithm is nothing but the Lebesgue curve.

On the other hand, define another order structure over the 1-supertiles of 2DTM by the curves shown in Figure \ref{2DTMsubrule_Lebesgue_2DTM}. Let $F_{tm}^A$ denote the space-filling curve formed by the algorithm (by inputting a tile with label $A$). First four approximants of $F_{tm}^A$ are shown in Figure \ref{tm}.
\begin{figure}[H]
\centering
\begin{tikzpicture}
\centering
\foreach \in in {0, 6}{
\draw (0+\in,0.5) rectangle (1+\in,1.5);
\draw[very thick, ->] (1.25+\in,1) -- (1.75+\in,1);
\draw[step=1cm] (2+\in,0) grid (4+\in,2);
}

\node at (0.5,1) {$A$};
\node at (0.5+6,1) {$B$};

\node at (2.5,0.5) {$A$};
\node at (3.5,0.5) {$B$};
\node at (2.5,1.5) {$B$};
\node at (3.5,1.5) {$A$};

\node at (2.5+6,0.5) {$B$};
\node at (3.5+6,0.5) {$A$};
\node at (2.5+6,1.5) {$A$};
\node at (3.5+6,1.5) {$B$};

\end{tikzpicture}
\caption{2-dimensional Thue-Morse substitution}
\label{2DTMsubrule}
\end{figure}
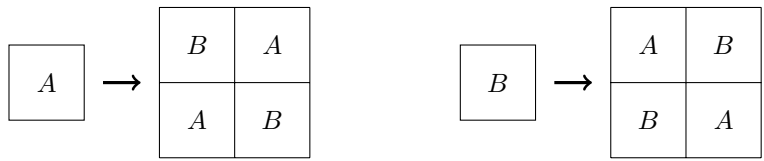

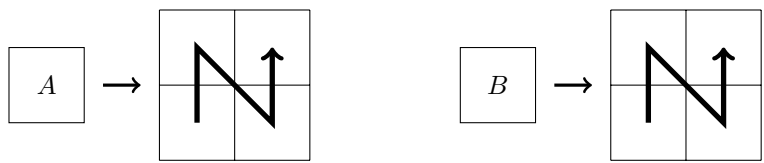
\begin{figure}[H]
\centering
\begin{tikzpicture}
\centering
\foreach \in in {0, 6}{
\draw (0+\in,0.5) rectangle (1+\in,1.5);
\draw[very thick, ->] (1.25+\in,1) -- (1.75+\in,1);
\draw[step=1cm] (2+\in,0) grid (4+\in,2);
\draw[line width=2pt, ->, color=black] (2.5+\in,0.5) -- (2.5+\in,1.5) -- (3.5+\in,0.5) -- (3.5+\in,1.5);
}

\node at (0.5,1) {$A$};
\node at (0.5+6,1) {$B$};

\end{tikzpicture}
\caption{Total orders defined through curves for the 1-supertiles of 2DTM}
\label{2DTMsubrule_Lebesgue}
\end{figure}

\begin{figure}[H]
\centering
\begin{tikzpicture}
\centering
\foreach \in in {0, 6}{
\draw (0+\in,0.5) rectangle (1+\in,1.5);
\draw[very thick, ->] (1.25+\in,1) -- (1.75+\in,1);
\draw[step=1cm] (2+\in,0) grid (4+\in,2);
}

\node at (0.5,1) {$A$};
\node at (0.5+6,1) {$B$};

\node at (2.5,0.5) {$1$};
\node at (3.5,0.5) {$3$};
\node at (2.5,1.5) {$2$};
\node at (3.5,1.5) {$4$};

\node at (2.5+6,0.5) {$1$};
\node at (3.5+6,0.5) {$3$};
\node at (2.5+6,1.5) {$2$};
\node at (3.5+6,1.5) {$4$};

\end{tikzpicture}
\caption{Total orders over 1-supertiles of 2DTM}
\label{2DTMsubrule_with_curves}
\end{figure}
\begin{figure}[H]
\centering
\begin{tikzpicture}
\centering
\foreach \in in {0, 6}{
\draw (0+\in,0.5) rectangle (1+\in,1.5);
\draw[very thick, ->] (1.25+\in,1) -- (1.75+\in,1);
\draw[step=1cm] (2+\in,0) grid (4+\in,2);
}

\node at (0.5,1) {$A$};
\node at (0.5+6,1) {$B$};

\draw[line width=2pt, ->, color=black] (2.5,0.5) -- (2.5,1.5) -- (3.5,0.5) -- (3.5,1.5);
\draw[line width=2pt, ->, color=black] (2.5+6,0.5) -- (3.5+6,0.5) -- (2.5+6,1.5) -- (3.5+6,1.5);
\end{tikzpicture}
\caption{Total orders over 1-supertiles of 2DTM}
\label{2DTMsubrule_Lebesgue_2DTM}
\end{figure}

\begin{figure}[H]
\centering
\begin{subfigure}[b]{0.18\textwidth}
\centering
\tikz[remember picture]\node[inner sep=0pt,outer sep=0pt] (a){\includegraphics[width=\linewidth]{Figures/TM__1.png}};
\end{subfigure}
\hfill
\begin{subfigure}[b]{0.18\textwidth}
\centering
\tikz[remember picture]\node[inner sep=0pt,outer sep=0pt] (b){\includegraphics[width=\linewidth]{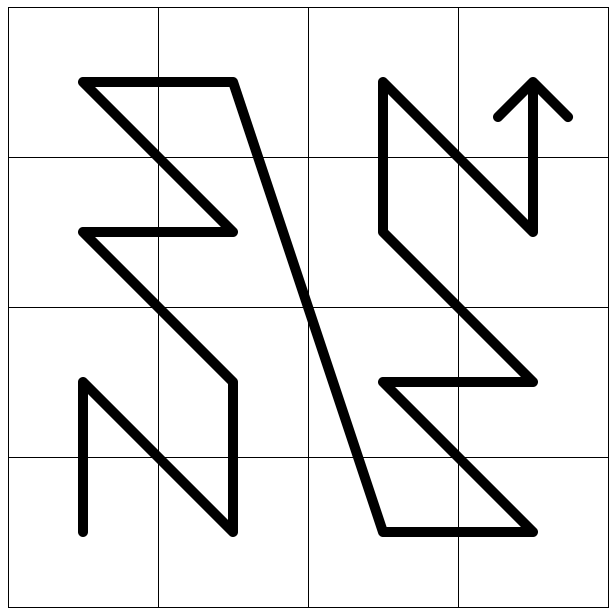}};
\end{subfigure}
\hfill
\begin{subfigure}[b]{0.18\textwidth}
\centering
\tikz[remember picture]\node[inner sep=0pt,outer sep=0pt] (c){\includegraphics[width=\linewidth]{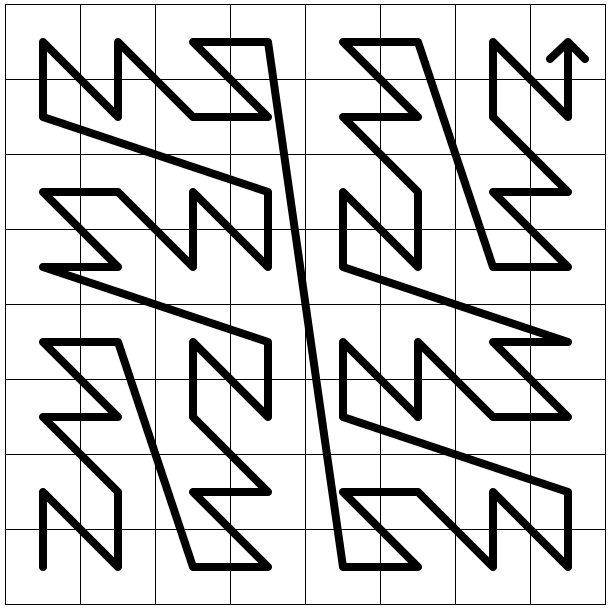}};
\end{subfigure}
\hfill
\begin{subfigure}[b]{0.18\textwidth}
\centering
\tikz[remember picture]\node[inner sep=0pt,outer sep=0pt] (d){\includegraphics[width=\linewidth]{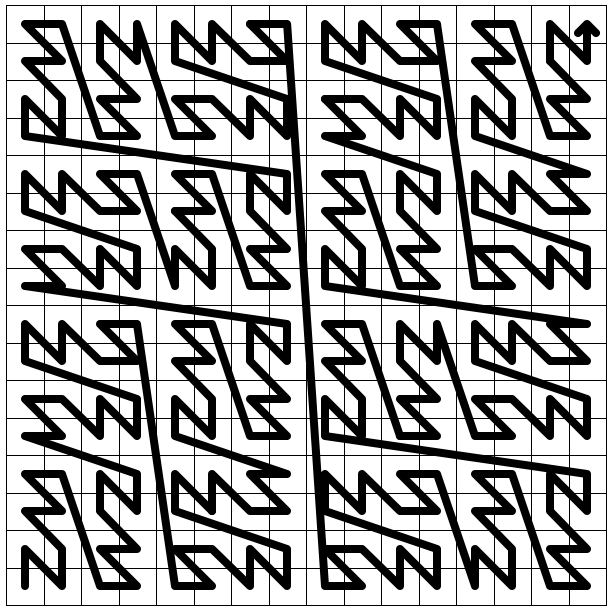}};
\end{subfigure}
\tikz[remember picture,overlay]\draw[line width=2pt,-stealth] ([xshift=5pt]a.east) -- ([xshift=30pt]a.east)node[midway,above,text=black,font=\LARGE\bfseries\sffamily] {};
\tikz[remember picture,overlay]\draw[line width=2pt,-stealth] ([xshift=5pt]b.east) -- ([xshift=30pt]b.east)node[midway,above,text=black,font=\LARGE\bfseries\sffamily] {};
\tikz[remember picture,overlay]\draw[line width=2pt,-stealth] ([xshift=5pt]c.east) -- ([xshift=30pt]c.east)node[midway,above,text=black,font=\LARGE\bfseries\sffamily] {};
\caption{First four approximants of $F_{tm}^A$}
\label{tm}
\end{figure}
\begin{figure}[H]
\centering
\begin{subfigure}[b]{0.45\textwidth}
\centering
\includegraphics[width=\textwidth]{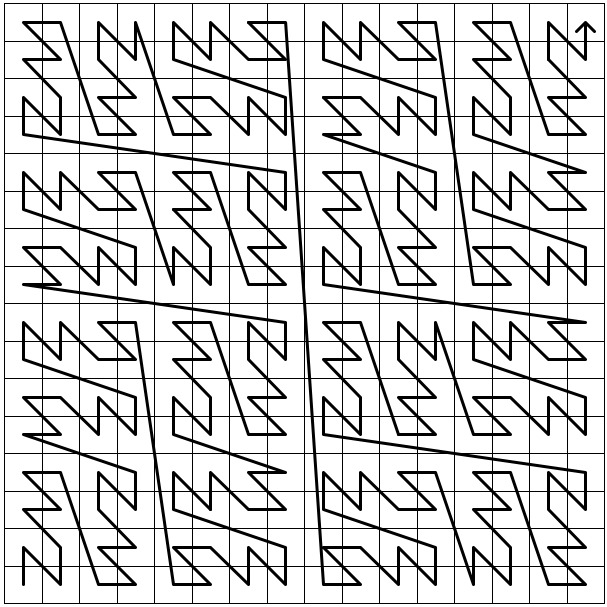}
\end{subfigure}
\hfill
\begin{subfigure}[b]{0.45\textwidth}
\centering
\includegraphics[width=\textwidth]{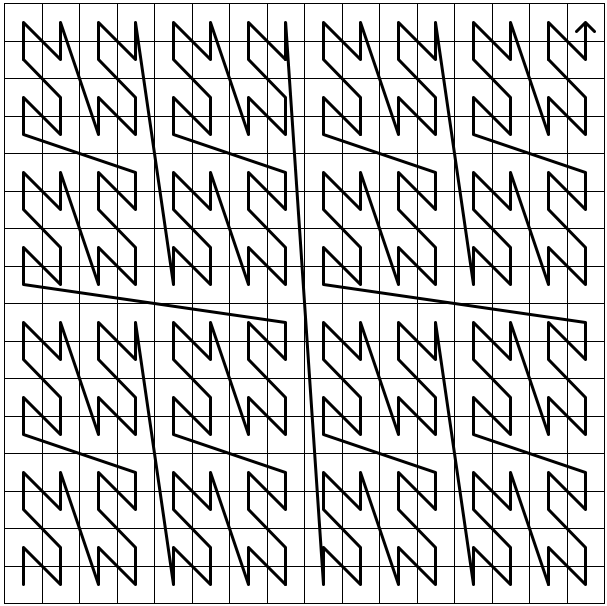}
\end{subfigure}
\caption{4th approximants of $F_{tm}^A$ and the Lebesgue curve, respectively}
\label{Lebesgue_2DTM_comparison}
\end{figure}
\end{example}

\begin{example}[Equithirds-variant]
Consider the substitution depicted in Figure \ref{Equithirds}. The substitution is a variation of \emph{Equithirds\ substitution} \cite{Web_Tiling_Encyclopedia}. It is defined over four tiles and their rotations, which are constructed by two different shapes, an equilateral triangle with side length $1$ and an isosceles triangle with side lengths $1,1,\sqrt{3}$. Its expansion factor is $\sqrt{3}$. 
The curves shown in Figure \ref{Equithirds_Orders} describe total orders over its 1-supertiles. Let $F^i_{eq}$ denote the space-filling curve produced by the algorithm from the tile with label $i$ for $i\in\{A^+,A^-,B^+,B^-\}$. First four approximants of $F^{A^+}_{eq}$ are shown in Figure \ref{equi2} and first four approximants of $F^{B^+}_{eq}$ are shown in Figure \ref{equi}. Observe that $F^{B^+}_{eq}(0)=F^{B^+}_{eq}(1)$. So, for illustration purposes, we modify the approximants of $F^{B^+}_{eq}$ to be closed curves. We connect the end points of its approximant curves with a straight line and fill the associated closed regions as demonstrated in Figure \ref{equi_filled_iterations} and Figure \ref{equi_filled_iterations__2}. 
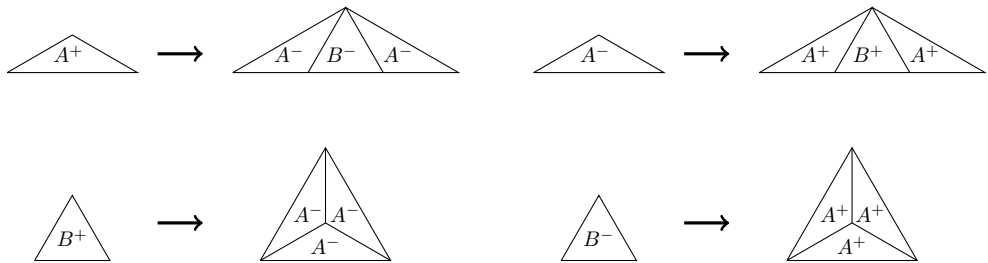
\begin{figure}[ht]
\centering
\begin{tikzpicture}
\centering
\foreach \inn in {0,7}{
\draw (0+\inn,0) -- (1.7320508075688772935274463415058723669428052538103806280558069794+\inn,0) -- (1.7320508075688772935274463415058723669428052538103806280558069794/2+\inn,0.5) -- (0+\inn,0);
\draw[very thick, ->] (2+\inn,0.25) -- (2.6+\inn,0.25);
\draw (3+\inn,0) -- (6+\inn,0) -- (3+3/2+\inn,1.7320508075688772935274463415058723669428052538103806280558069794/2) -- (3+\inn,0);
\draw (4+\inn,0) -- (3+3/2+\inn,1.7320508075688772935274463415058723669428052538103806280558069794/2) -- (5+\inn,0);
}
\node[scale=0.8] at (1.7320508075688772935274463415058723669428052538103806280558069794/2-0.05,0.25) {$A^+$};
\node[scale=0.8] at (1.7320508075688772935274463415058723669428052538103806280558069794/2-0.05+7,0.25) {$A^-$};

\node[scale=0.8] at (4.5-0.05,0.25) {$B^-$};
\node[scale=0.8] at (3.8-0.05,0.25) {$A^-$};
\node[scale=0.8] at (5.25-0.05,0.25) {$A^-$};
\node[scale=0.8] at (4.5-0.05+7,0.25) {$B^+$};
\node[scale=0.8] at (3.8-0.05+7,0.25) {$A^+$};
\node[scale=0.8] at (5.25-0.05+7,0.25) {$A^+$};

\foreach \inn in {0.7320508075688772935274463415058723669428052538103806280558069794/2,7+0.7320508075688772935274463415058723669428052538103806280558069794/2}{
\draw (0+\inn,-2.5) -- (1+\inn,-2.5) -- (0.5+\inn,1.7320508075688772935274463415058723669428052538103806280558069794/2-2.5) -- (0+\inn,0-2.5);
\draw[very thick, ->] (2-0.7320508075688772935274463415058723669428052538103806280558069794/2+\inn,0.5-2.5) -- (2.6-0.7320508075688772935274463415058723669428052538103806280558069794/2+\inn,0.5-2.5);
\draw (3+\inn,-2.5) -- (3+1.7320508075688772935274463415058723669428052538103806280558069794+\inn,-2.5) -- (3+1.7320508075688772935274463415058723669428052538103806280558069794/2+\inn,1.5-2.5) -- (3+\inn,-2.5);

\draw (3+\inn,-2.5) -- (3+1.7320508075688772935274463415058723669428052538103806280558069794/2+\inn,0.5-2.5);
\draw (3+1.7320508075688772935274463415058723669428052538103806280558069794+\inn,-2.5) -- (3+1.7320508075688772935274463415058723669428052538103806280558069794/2+\inn,0.5-2.5) -- (3+1.7320508075688772935274463415058723669428052538103806280558069794/2+\inn,1.5-2.5);
}

\node[scale=0.8] at (0.7320508075688772935274463415058723669428052538103806280558069794/2+0.5,0.3-2.5) {$B^+$};
\node[scale=0.8] at (0.7320508075688772935274463415058723669428052538103806280558069794/2+0.5+7,0.3-2.5) {$B^-$};

\node[scale=0.8] at (3+1.7320508075688772935274463415058723669428052538103806280558069794/2+0.7320508075688772935274463415058723669428052538103806280558069794/2,0.2-2.5) {$A^-$};
\node[scale=0.8] at (3+1.7320508075688772935274463415058723669428052538103806280558069794/2+0.7320508075688772935274463415058723669428052538103806280558069794/2+7,0.2-2.5) {$A^+$};

\node[scale=0.8] at (3+1.7320508075688772935274463415058723669428052538103806280558069794/2+0.14,0.63-2.5) {$A^-$};
\node[scale=0.8] at (3+1.7320508075688772935274463415058723669428052538103806280558069794/2+0.63,0.63-2.5) {$A^-$};
\node[scale=0.8] at (7+3+1.7320508075688772935274463415058723669428052538103806280558069794/2+0.14,0.63-2.5) {$A^+$};
\node[scale=0.8] at (7+3+1.7320508075688772935274463415058723669428052538103806280558069794/2+0.61,0.63-2.5) {$A^+$};
\end{tikzpicture}
\caption{Equithirds-variant}
\label{Equithirds}
\end{figure}
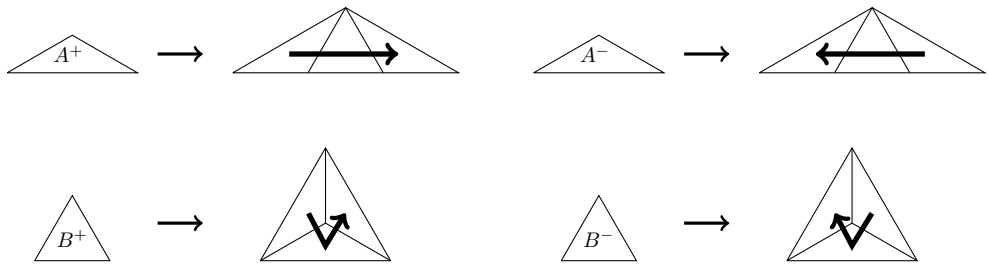
\begin{figure}[H]
\centering
\begin{tikzpicture}
\centering
\foreach \inn in {0,7}{
\draw (0+\inn,0) -- (1.7320508075688772935274463415058723669428052538103806280558069794+\inn,0) -- (1.7320508075688772935274463415058723669428052538103806280558069794/2+\inn,0.5) -- (0+\inn,0);
\draw[very thick, ->] (2+\inn,0.25) -- (2.6+\inn,0.25);
\draw (3+\inn,0) -- (6+\inn,0) -- (3+3/2+\inn,1.7320508075688772935274463415058723669428052538103806280558069794/2) -- (3+\inn,0);
\draw (4+\inn,0) -- (3+3/2+\inn,1.7320508075688772935274463415058723669428052538103806280558069794/2) -- (5+\inn,0);
}
\foreach \inn in {0.7320508075688772935274463415058723669428052538103806280558069794/2,7+0.7320508075688772935274463415058723669428052538103806280558069794/2}{
\draw (0+\inn,-2.5) -- (1+\inn,-2.5) -- (0.5+\inn,1.7320508075688772935274463415058723669428052538103806280558069794/2-2.5) -- (0+\inn,0-2.5);
\draw[very thick, ->] (2-0.7320508075688772935274463415058723669428052538103806280558069794/2+\inn,0.5-2.5) -- (2.6-0.7320508075688772935274463415058723669428052538103806280558069794/2+\inn,0.5-2.5);
\draw (3+\inn,-2.5) -- (3+1.7320508075688772935274463415058723669428052538103806280558069794+\inn,-2.5) -- (3+1.7320508075688772935274463415058723669428052538103806280558069794/2+\inn,1.5-2.5) -- (3+\inn,-2.5);

\draw (3+\inn,-2.5) -- (3+1.7320508075688772935274463415058723669428052538103806280558069794/2+\inn,0.5-2.5);
\draw (3+1.7320508075688772935274463415058723669428052538103806280558069794+\inn,-2.5) -- (3+1.7320508075688772935274463415058723669428052538103806280558069794/2+\inn,0.5-2.5) -- (3+1.7320508075688772935274463415058723669428052538103806280558069794/2+\inn,1.5-2.5);
}

\draw[line width=2pt, ->] (3+1.7320508075688772935274463415058723669428052538103806280558069794/2+0.14,0.63-2.5) -- (3+1.7320508075688772935274463415058723669428052538103806280558069794/2+0.7320508075688772935274463415058723669428052538103806280558069794/2,0.2-2.5) -- (3+1.7320508075688772935274463415058723669428052538103806280558069794/2+0.63,0.63-2.5);

\draw[line width=2pt, <-] (7+3+1.7320508075688772935274463415058723669428052538103806280558069794/2+0.14,0.63-2.5) -- (7+3+1.7320508075688772935274463415058723669428052538103806280558069794/2+0.7320508075688772935274463415058723669428052538103806280558069794/2,0.2-2.5) -- (7+3+1.7320508075688772935274463415058723669428052538103806280558069794/2+0.63,0.63-2.5);

\draw[line width=2pt, ->] (3.8-0.05,0.25) -- (4.5-0.05,0.25) -- (5.25-0.05,0.25);
\draw[line width=2pt, <-] (7+3.8-0.05,0.25) -- (7+4.5-0.05,0.25) -- (7+5.25-0.05,0.25);

\node[scale=0.8] at (1.7320508075688772935274463415058723669428052538103806280558069794/2-0.05,0.25) {$A^+$};
\node[scale=0.8] at (1.7320508075688772935274463415058723669428052538103806280558069794/2-0.05+7,0.25) {$A^-$};
\node[scale=0.8] at (0.7320508075688772935274463415058723669428052538103806280558069794/2+0.5,0.3-2.5) {$B^+$};
\node[scale=0.8] at (0.7320508075688772935274463415058723669428052538103806280558069794/2+0.5+7,0.3-2.5) {$B^-$};

\end{tikzpicture}
\caption{Total orders over the 1-supertiles of Equithirds-variant}
\label{Equithirds_Orders}
\end{figure}
\begin{figure}[ht]
\centering
\begin{subfigure}[b]{0.18\textwidth}
\centering
\tikz[remember picture]\node[inner sep=0pt,outer sep=0pt] (a){\includegraphics[width=\linewidth]{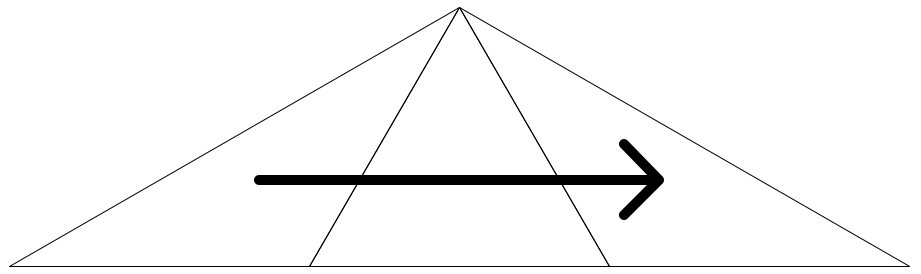}};
\end{subfigure}
\hfill
\begin{subfigure}[b]{0.18\textwidth}
\centering
\tikz[remember picture]\node[inner sep=0pt,outer sep=0pt] (b){\includegraphics[width=\linewidth]{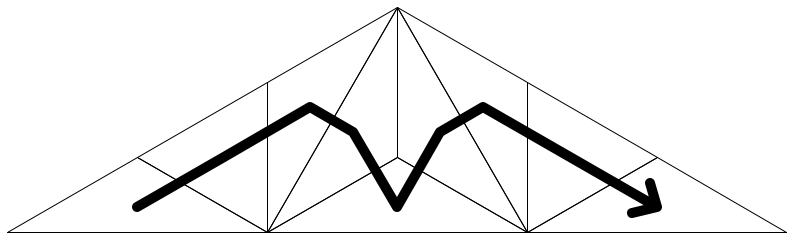}};
\end{subfigure}
\hfill
\begin{subfigure}[b]{0.18\textwidth}
\centering
\tikz[remember picture]\node[inner sep=0pt,outer sep=0pt] (c){\includegraphics[width=\linewidth]{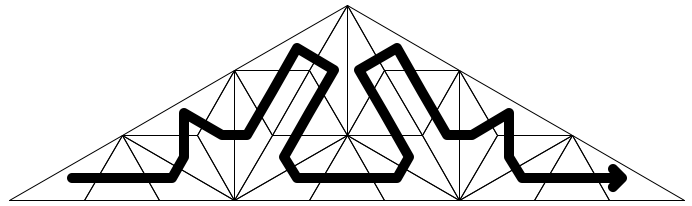}};
\end{subfigure}
\hfill
\begin{subfigure}[b]{0.18\textwidth}
\centering
\tikz[remember picture]\node[inner sep=0pt,outer sep=0pt] (d){\includegraphics[width=\linewidth]{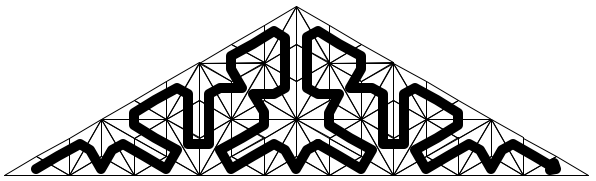}};
\end{subfigure}
\tikz[remember picture,overlay]\draw[line width=2pt,-stealth] ([xshift=5pt]a.east) -- ([xshift=30pt]a.east)node[midway,above,text=black,font=\LARGE\bfseries\sffamily] {};
\tikz[remember picture,overlay]\draw[line width=2pt,-stealth] ([xshift=5pt]b.east) -- ([xshift=30pt]b.east)node[midway,above,text=black,font=\LARGE\bfseries\sffamily] {};
\tikz[remember picture,overlay]\draw[line width=2pt,-stealth] ([xshift=5pt]c.east) -- ([xshift=30pt]c.east)node[midway,above,text=black,font=\LARGE\bfseries\sffamily] {};
\caption{First four approximants of $F^{A^+}_{eq}$}
\label{equi2}
\end{figure}
\begin{figure}[ht]
\centering
\begin{subfigure}[b]{0.18\textwidth}
\centering
\tikz[remember picture]\node[inner sep=0pt,outer sep=0pt] (a){\includegraphics[width=\linewidth]{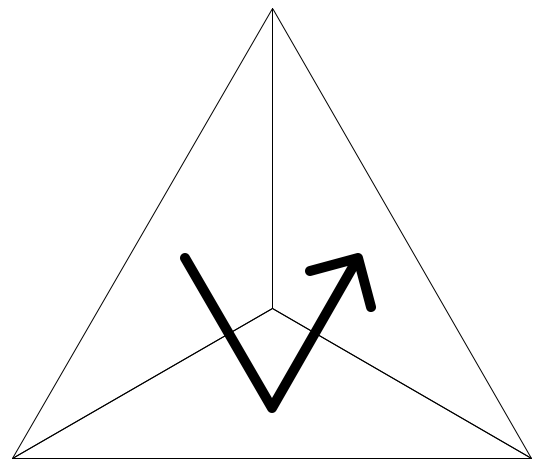}};
\end{subfigure}
\hfill
\begin{subfigure}[b]{0.18\textwidth}
\centering
\tikz[remember picture]\node[inner sep=0pt,outer sep=0pt] (b){\includegraphics[width=\linewidth]{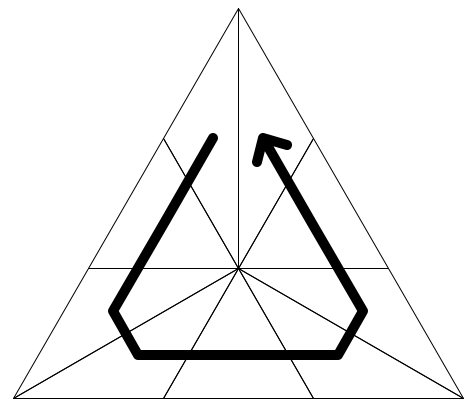}};
\end{subfigure}
\hfill
\begin{subfigure}[b]{0.18\textwidth}
\centering
\tikz[remember picture]\node[inner sep=0pt,outer sep=0pt] (c){\includegraphics[width=\linewidth]{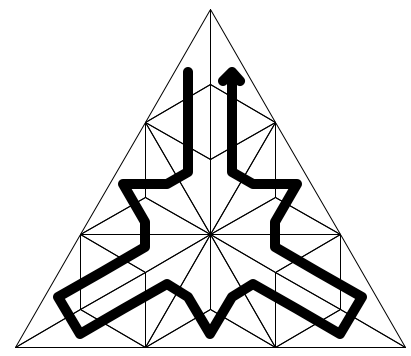}};
\end{subfigure}
\hfill
\begin{subfigure}[b]{0.18\textwidth}
\centering
\tikz[remember picture]\node[inner sep=0pt,outer sep=0pt] (d){\includegraphics[width=\linewidth]{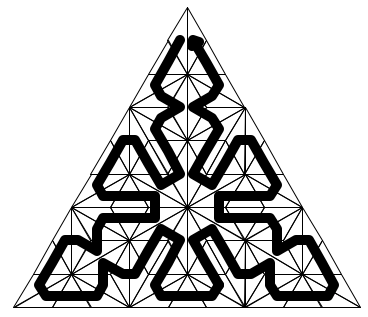}};
\end{subfigure}
\tikz[remember picture,overlay]\draw[line width=2pt,-stealth] ([xshift=5pt]a.east) -- ([xshift=30pt]a.east)node[midway,above,text=black,font=\LARGE\bfseries\sffamily] {};
\tikz[remember picture,overlay]\draw[line width=2pt,-stealth] ([xshift=5pt]b.east) -- ([xshift=30pt]b.east)node[midway,above,text=black,font=\LARGE\bfseries\sffamily] {};
\tikz[remember picture,overlay]\draw[line width=2pt,-stealth] ([xshift=5pt]c.east) -- ([xshift=30pt]c.east)node[midway,above,text=black,font=\LARGE\bfseries\sffamily] {};
\caption{First four approximants of $F^{B^+}_{eq}$}
\label{equi}
\end{figure}
\begin{figure}[H]
\centering
\begin{subfigure}[b]{0.24\textwidth}
\centering
\tikz[remember picture]\node[inner sep=0pt,outer sep=0pt] (a){\includegraphics[width=\linewidth]{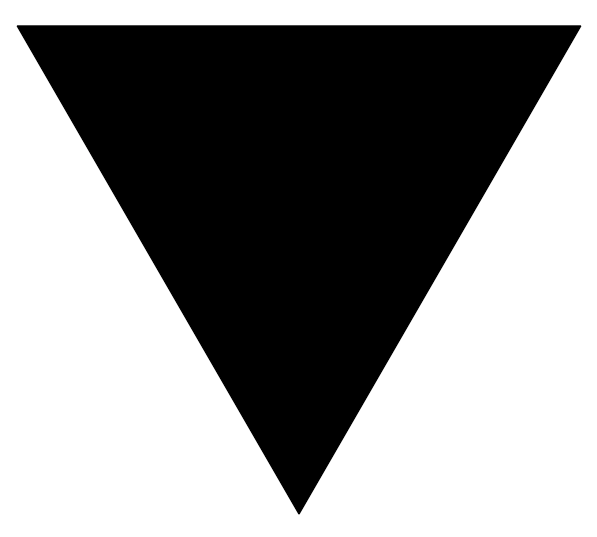}};
\end{subfigure}
\hfill
\begin{subfigure}[b]{0.24\textwidth}
\centering
\tikz[remember picture]\node[inner sep=0pt,outer sep=0pt] (b){\includegraphics[width=\linewidth]{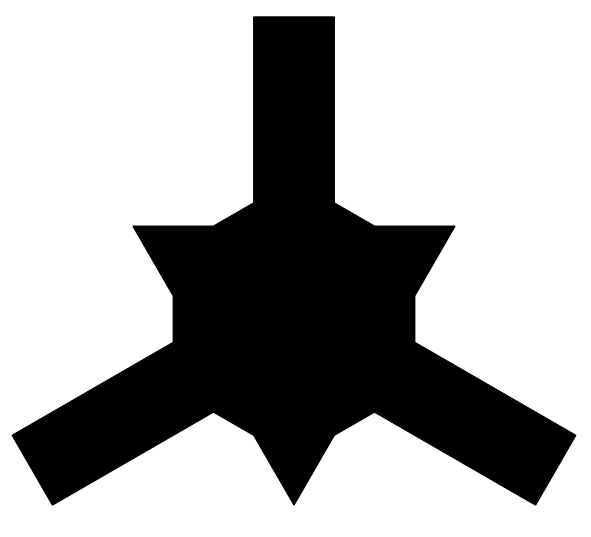}};
\end{subfigure}
\hfill
\begin{subfigure}[b]{0.24\textwidth}
\centering
\tikz[remember picture]\node[inner sep=0pt,outer sep=0pt] (c){\includegraphics[width=\linewidth]{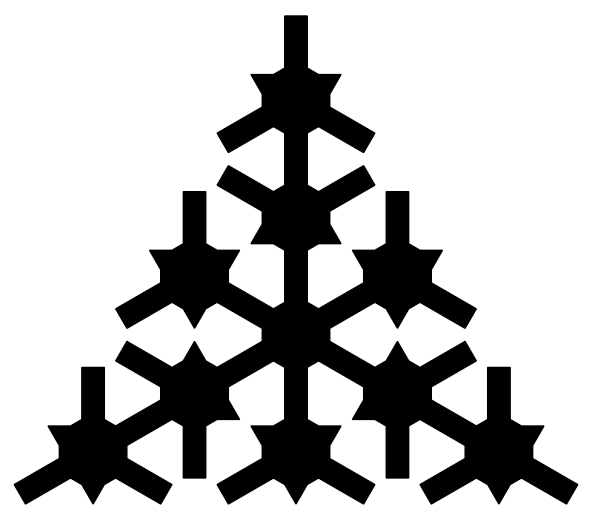}};
\end{subfigure}
\hfill
\tikz[remember picture,overlay]\draw[line width=2pt,-stealth] ([xshift=5pt]a.east) -- ([xshift=30pt]a.east)node[midway,above,text=black,font=\LARGE\bfseries\sffamily] {};
\tikz[remember picture,overlay]\draw[line width=2pt,-stealth] ([xshift=5pt]b.east) -- ([xshift=30pt]b.east)node[midway,above,text=black,font=\LARGE\bfseries\sffamily] {};
\tikz[remember picture,overlay]\draw[line width=2pt,-stealth] ([xshift=5pt]c.east) -- ([xshift=30pt]c.east)node[midway,above,text=black,font=\LARGE\bfseries\sffamily] {};
\begin{subfigure}[b]{\textwidth}
\centering
\tikz[remember picture]\node[inner sep=0pt,outer sep=0pt] (d){\includegraphics[scale=0.6]{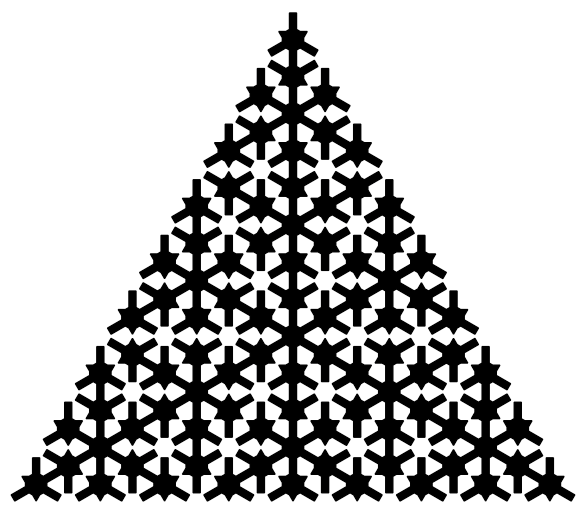}};
\end{subfigure}
\caption{1st, 3rd, 5th and 7th approximants of $F^{B^+}_{eq}$ where the end points of the approximants are joined with a line and the associated closed regions are filled. The 7th approximant is scaled up for illustration purposes.}
\label{equi_filled_iterations}
\end{figure}
\begin{figure}[H]
\centering
\begin{subfigure}[b]{0.24\textwidth}
\centering
\tikz[remember picture]\node[inner sep=0pt,outer sep=0pt] (a){\includegraphics[width=\linewidth]{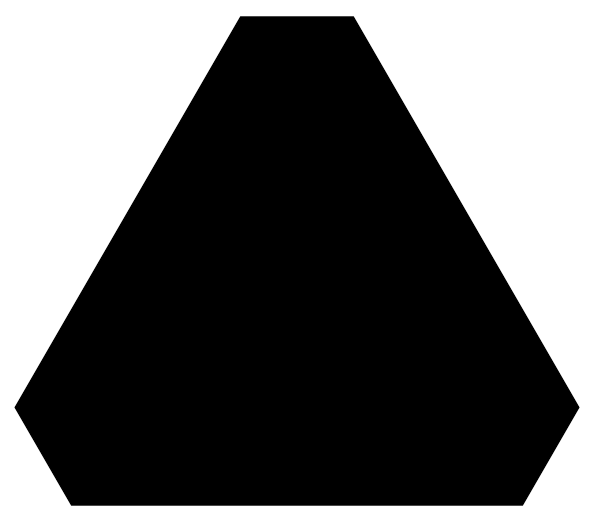}};
\end{subfigure}
\hfill
\begin{subfigure}[b]{0.24\textwidth}
\centering
\tikz[remember picture]\node[inner sep=0pt,outer sep=0pt] (b){\includegraphics[width=\linewidth]{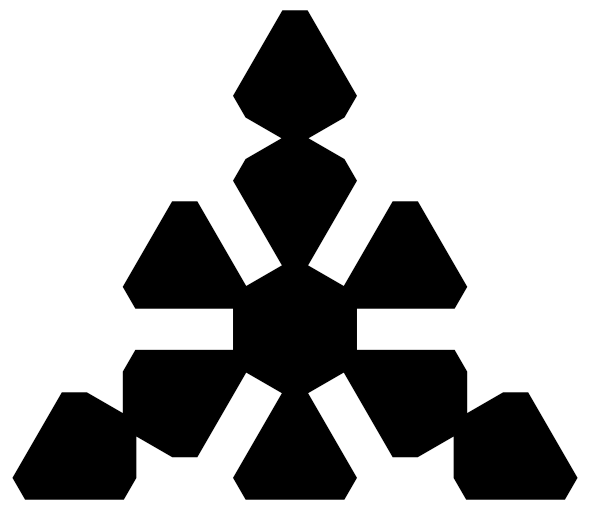}};
\end{subfigure}
\hfill
\begin{subfigure}[b]{0.24\textwidth}
\centering
\tikz[remember picture]\node[inner sep=0pt,outer sep=0pt] (c){\includegraphics[width=\linewidth]{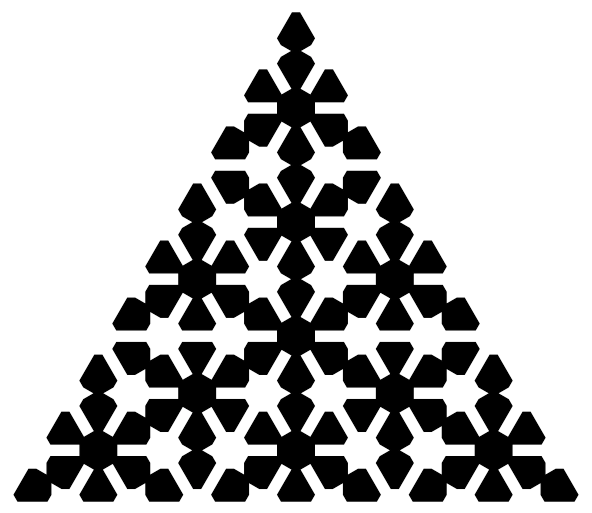}};
\end{subfigure}
\hfill
\tikz[remember picture,overlay]\draw[line width=2pt,-stealth] ([xshift=5pt]a.east) -- ([xshift=30pt]a.east)node[midway,above,text=black,font=\LARGE\bfseries\sffamily] {};
\tikz[remember picture,overlay]\draw[line width=2pt,-stealth] ([xshift=5pt]b.east) -- ([xshift=30pt]b.east)node[midway,above,text=black,font=\LARGE\bfseries\sffamily] {};
\tikz[remember picture,overlay]\draw[line width=2pt,-stealth] ([xshift=5pt]c.east) -- ([xshift=30pt]c.east)node[midway,above,text=black,font=\LARGE\bfseries\sffamily] {};
\begin{subfigure}[b]{\textwidth}
\centering
\tikz[remember picture]\node[inner sep=0pt,outer sep=0pt] (d){\includegraphics[scale=0.6]{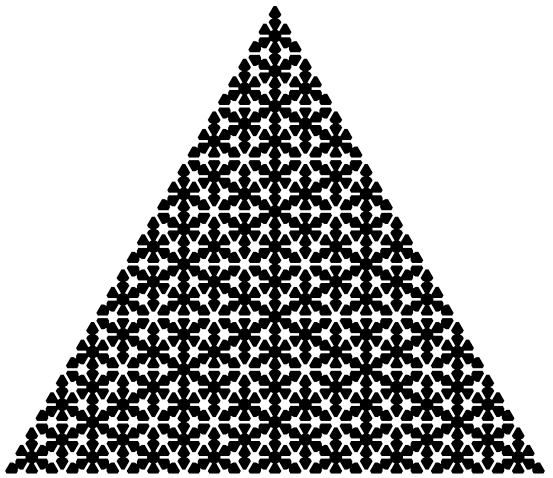}};
\end{subfigure}
\caption{2nd, 4th, 6th and 8th approximants of $F^{B^+}_{eq}$ where the end points of the approximants are joined with a line and the associated closed regions are filled. The 8th approximant is scaled up for illustration purposes.}
\label{equi_filled_iterations__2}
\end{figure}


Next we describe the geometry of approximants of $F^{A^+}_{eq}$. Notice that $F^{A^+}_{eq}(0)\neq F^{A^+}_{eq}(1)$. Let $p_{A^+}^r$ denote the rotated version of $p_{A^+}$ by $\pi$ such that their longer edges merge. Also, denote
the space filling curves generated by these two tiles by $F^{A^+}_{eq}, F^{A^+_r}_{eq}$. Since $F^{A^+}_{eq}(0)=F^{A^+_r}_{eq}(1)$ and $F^{A^+}_{eq}(1)=F^{A^+_r}_{eq}(0)$, we can concatenate $F^{A^+}_{eq}$ with $F^{A^+_r}_{eq}$ in order to define another space-filling curve $F^{A}_{eq}$ so that $F^{A}_{eq}(0)=F^{A}_{eq}(1)$. The geometry of approximants of $F^{A^+}_{eq}$ will be visualised through approximants of $F^{A}_{eq}$ since we can modify the approximants of  $F^{A}_{eq}$ to be closed curves as shown in Figure \ref{equi2_0}. 
The associated filled regions of the first 8 approximant curves are shown in Figure \ref{equi_filled_iterations_second} and Figure \ref{equi_filled_iterations__2_second}.

\begin{figure}[H]
\centering
\begin{subfigure}[b]{0.18\textwidth}
\centering
\tikz[remember picture]\node[inner sep=0pt,outer sep=0pt] (a){\includegraphics[width=\linewidth]{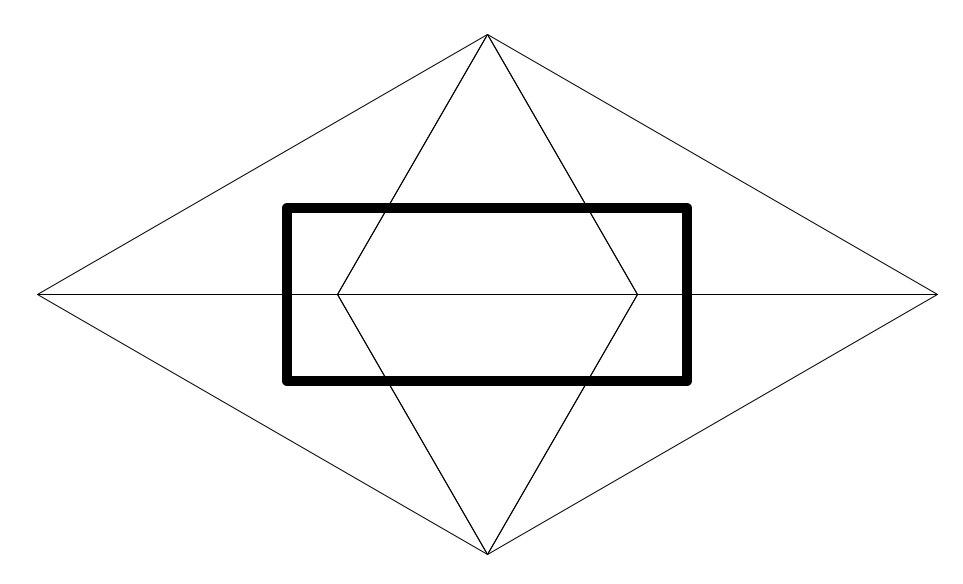}};
\end{subfigure}
\hfill
\begin{subfigure}[b]{0.18\textwidth}
\centering
\tikz[remember picture]\node[inner sep=0pt,outer sep=0pt] (b){\includegraphics[width=\linewidth]{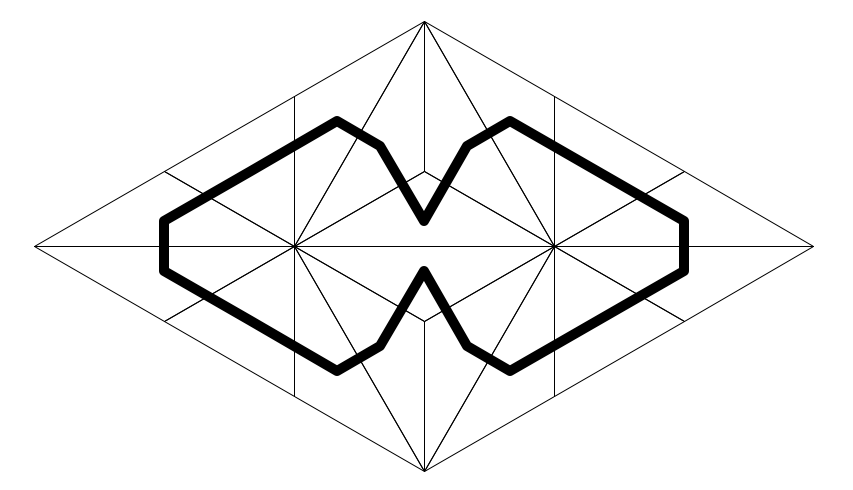}};
\end{subfigure}
\hfill
\begin{subfigure}[b]{0.18\textwidth}
\centering
\tikz[remember picture]\node[inner sep=0pt,outer sep=0pt] (c){\includegraphics[width=\linewidth]{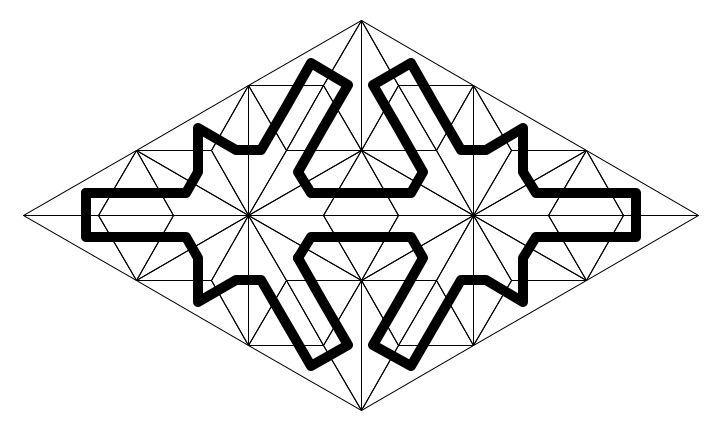}};
\end{subfigure}
\hfill
\begin{subfigure}[b]{0.18\textwidth}
\centering
\tikz[remember picture]\node[inner sep=0pt,outer sep=0pt] (d){\includegraphics[width=\linewidth]{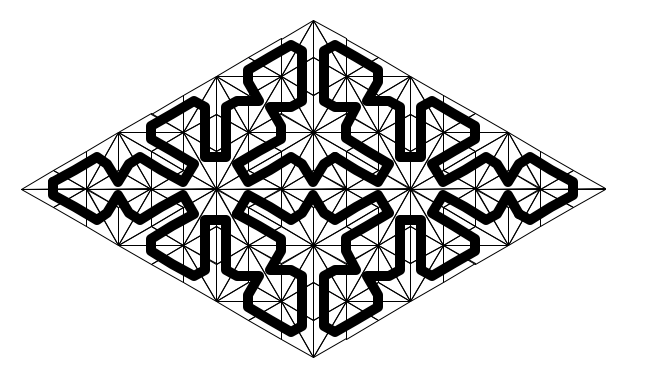}};
\end{subfigure}
\tikz[remember picture,overlay]\draw[line width=2pt,-stealth] ([xshift=5pt]a.east) -- ([xshift=30pt]a.east)node[midway,above,text=black,font=\LARGE\bfseries\sffamily] {};
\tikz[remember picture,overlay]\draw[line width=2pt,-stealth] ([xshift=5pt]b.east) -- ([xshift=30pt]b.east)node[midway,above,text=black,font=\LARGE\bfseries\sffamily] {};
\tikz[remember picture,overlay]\draw[line width=2pt,-stealth] ([xshift=5pt]c.east) -- ([xshift=30pt]c.east)node[midway,above,text=black,font=\LARGE\bfseries\sffamily] {};
\caption{First four approximants of $F^{A}_{eq}$ where their end points are joined with a line.}
\label{equi2_0}
\end{figure}

\begin{figure}[H]
\centering
\begin{subfigure}[b]{0.24\textwidth}
\centering
\tikz[remember picture]\node[inner sep=0pt,outer sep=0pt] (a){\includegraphics[width=\linewidth]{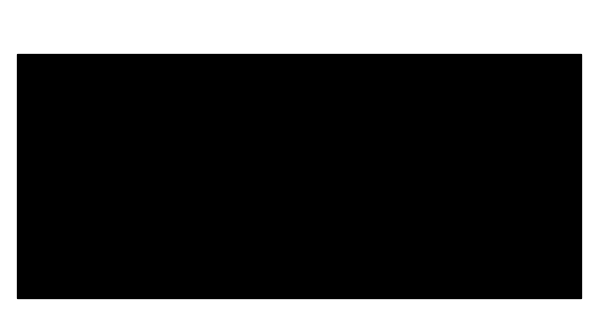}};
\end{subfigure}
\hfill
\begin{subfigure}[b]{0.24\textwidth}
\centering
\tikz[remember picture]\node[inner sep=0pt,outer sep=0pt] (b){\includegraphics[width=\linewidth]{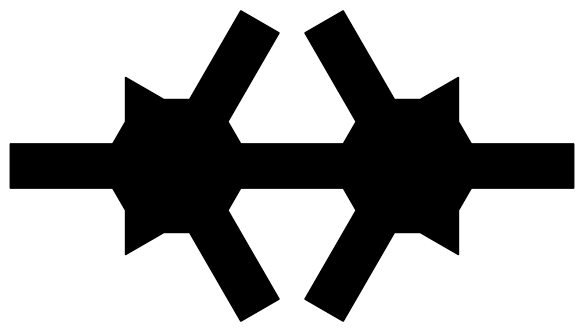}};
\end{subfigure}
\hfill
\begin{subfigure}[b]{0.24\textwidth}
\centering
\tikz[remember picture]\node[inner sep=0pt,outer sep=0pt] (c){\includegraphics[width=\linewidth]{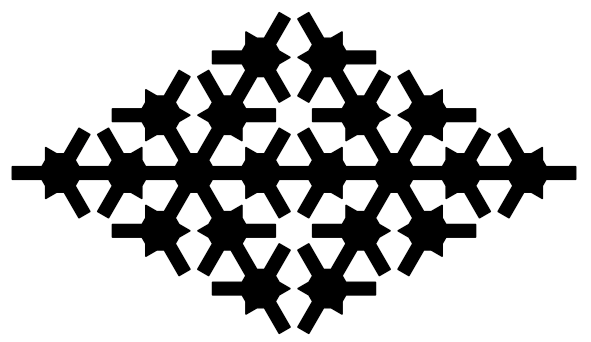}};
\end{subfigure}
\hfill
\tikz[remember picture,overlay]\draw[line width=2pt,-stealth] ([xshift=5pt]a.east) -- ([xshift=30pt]a.east)node[midway,below,text=black,font=\LARGE\bfseries\sffamily] {};
\tikz[remember picture,overlay]\draw[line width=2pt,-stealth] ([xshift=5pt]b.east) -- ([xshift=30pt]b.east)node[midway,above,text=black,font=\LARGE\bfseries\sffamily] {};
\tikz[remember picture,overlay]\draw[line width=2pt,-stealth] ([xshift=5pt]c.east) -- ([xshift=30pt]c.east)node[midway,above,text=black,font=\LARGE\bfseries\sffamily] {};
\begin{subfigure}[b]{\textwidth}
\centering
\tikz[remember picture]\node[inner sep=0pt,outer sep=0pt] (d){\includegraphics[scale=0.6]{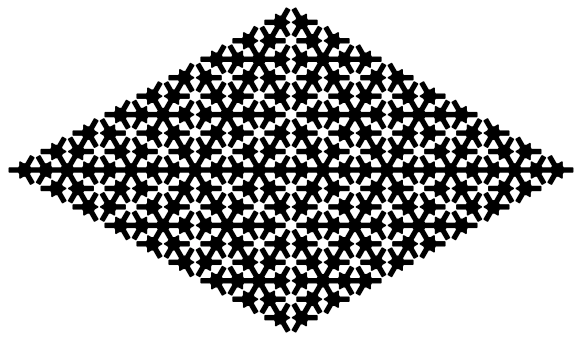}};
\end{subfigure}
\caption{Filled versions of 1st, 3rd, 5th and 7th approximants of $F^{A}_{eq}$. The 7th approximant is scaled up for demonstration purposes.}
\label{equi_filled_iterations_second}
\end{figure}


\begin{figure}[H]
\centering
\begin{subfigure}[b]{0.24\textwidth}
\centering
\tikz[remember picture]\node[inner sep=0pt,outer sep=0pt] (a){\includegraphics[width=\linewidth]{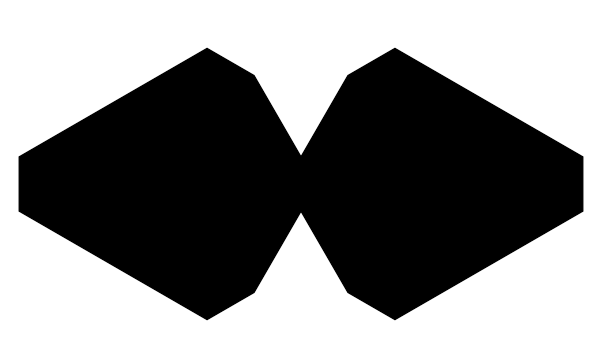}};
\end{subfigure}
\hfill
\begin{subfigure}[b]{0.24\textwidth}
\centering
\tikz[remember picture]\node[inner sep=0pt,outer sep=0pt] (b){\includegraphics[width=\linewidth]{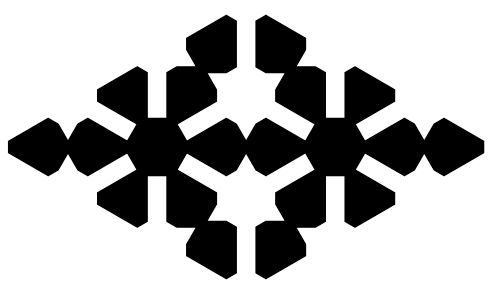}};
\end{subfigure}
\hfill
\begin{subfigure}[b]{0.24\textwidth}
\centering
\tikz[remember picture]\node[inner sep=0pt,outer sep=0pt] (c){\includegraphics[width=\linewidth]{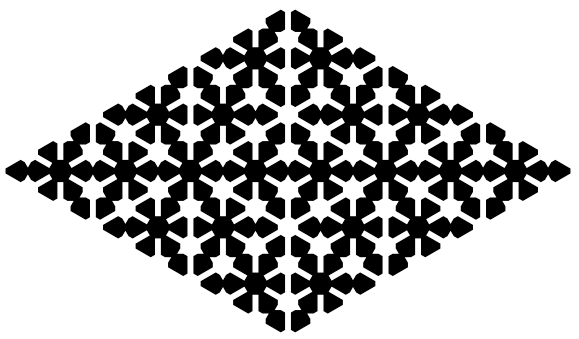}};
\end{subfigure}
\hfill
\tikz[remember picture,overlay]\draw[line width=2pt,-stealth] ([xshift=5pt]a.east) -- ([xshift=30pt]a.east)node[midway,below,text=black,font=\LARGE\bfseries\sffamily] {};
\tikz[remember picture,overlay]\draw[line width=2pt,-stealth] ([xshift=5pt]b.east) -- ([xshift=30pt]b.east)node[midway,above,text=black,font=\LARGE\bfseries\sffamily] {};
\tikz[remember picture,overlay]\draw[line width=2pt,-stealth] ([xshift=5pt]c.east) -- ([xshift=30pt]c.east)node[midway,above,text=black,font=\LARGE\bfseries\sffamily] {};
\begin{subfigure}[b]{\textwidth}
\centering
\tikz[remember picture]\node[inner sep=0pt,outer sep=0pt] (d){\includegraphics[scale=0.62]{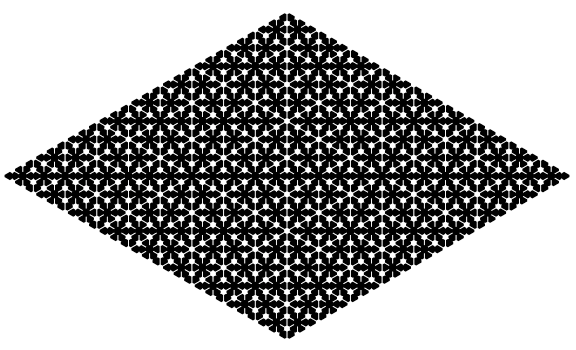}};
\end{subfigure}
\caption{Filled versions of 2nd, 4th, 6th and 8th approximants of $F^{A}_{eq}$. The 8th approximant is scaled up for demonstration purposes.}
\label{equi_filled_iterations__2_second}
\end{figure}

\end{example}

For the following two substitution examples, we will use the algorithm (Theorem \ref{main_theorem}) to create space filling curves over patches instead of tiles, similar to the construction of $F_{eq}^{A}$, for demonstration purposes.

\begin{example}[Pinwheel-variant]
The substitution in Figure \ref{pinwheel_variant_substitution} is defined over four tiles and their rotations. It is a variation of \emph{Pinwheel\ substitution} \cite{Web_Tiling_Encyclopedia}. Every tile in the substitution is a right triangle with side lengths $1,2,\sqrt{5}$. The expansion factor for this substitution is $\sqrt{5}$. The curves in Figure \ref{pinwheel_variant_substitution_order} define total orders over 1-supertiles of this substitution. Let $R_1,R_2$ denote the regions, rhombus and rectangle, shown in Figure \ref{pinwheel_rhombus_and_rectangle_prototiles}. 
Using the algorithm together with the defined total orders, we can produce two space-filling curves $F_{pin}^1, F_{pin}^2$ over $R_1,R_2$, respectively. Figure \ref{pinwheel_rhombus_orders} indicates the first two approximants of $F_{pin}^1$ and $F_{pin}^2$. Observe that $F^i_{pin}(0)=F^i_{pin}(1)$ for $i=1,2$. 

Figure \ref{pinwheel_rhombus_filled_approximants} and Figure \ref{pinwheel_rhombus_filled_approximants2} demonstrates the first 5 approximants of $F_{pin}^1$, whereas Figure \ref{pinwheel_rectangles_filled_approximants} and Figure \ref{pinwheel_rectangles_filled_approximants2} illustrates the first 6 approximants of $F_{pin}^2$.

\begin{figure}[H]
\centering
\begin{tikzpicture}[scale=0.9]
\filldraw[thick, color=gray!40] (0,0) -- (2,0) -- (2,4) -- (0,0);
\draw (0,0) -- (2,0) -- (2,4) -- (0,0);
\draw (4,0) -- (6,0) -- (6,4) -- (4,0);

\filldraw[thick, color=gray!40] (8,0) -- (10,0) -- (8,4) -- (8,0);
\draw (8,0) -- (10,0) -- (8,4) -- (8,0);
\draw (12,0) -- (14,0) -- (12,4) -- (12,0);

\coordinate (A)  at (0.4472135954999579392818347337462552470881236719223051448541794490, 0.8944271909999158785636694674925104941762473438446102897083588981);
\coordinate (B)  at (1.3416407864998738178455042012387657412643710157669154345625383472, 2.6832815729997476356910084024775314825287420315338308691250766944);
\coordinate (C)  at (2, 2.2360679774997896964091736687312762354406183596115257242708972454);
\coordinate (D)  at (1.2236067977499789696409173668731276235440618359611525724270897245, 0.4472135954999579392818347337462552470881236719223051448541794490);

\coordinate (AA)  at (4+0.4472135954999579392818347337462552470881236719223051448541794490, 0.8944271909999158785636694674925104941762473438446102897083588981);
\coordinate (BB)  at (4+1.3416407864998738178455042012387657412643710157669154345625383472, 2.6832815729997476356910084024775314825287420315338308691250766944);
\coordinate (CC)  at (4+2, 2.2360679774997896964091736687312762354406183596115257242708972454);
\coordinate (DD)  at (4+1.2236067977499789696409173668731276235440618359611525724270897245, 0.4472135954999579392818347337462552470881236719223051448541794490);

\coordinate (XA)  at (8+1.5527864045000420607181652662537447529118763280776948551458205509, 0.8944271909999158785636694674925104941762473438446102897083588981);
\coordinate (XB)  at (8+0.6583592135001261821544957987612342587356289842330845654374616527, 2.6832815729997476356910084024775314825287420315338308691250766944);
\coordinate (XC)  at (8, 2.2360679774997896964091736687312762354406183596115257242708972454);
\coordinate (XD)  at (8+0.8944271909999158785636694674925104941762473438446102897083588981-0.05, 0.4472135954999579392818347337462552470881236719223051448541794490+0.05);

\coordinate (XXA)  at (4+8+1.5527864045000420607181652662537447529118763280776948551458205509, 0.8944271909999158785636694674925104941762473438446102897083588981);
\coordinate (XXB)  at (4+8+0.6583592135001261821544957987612342587356289842330845654374616527, 2.6832815729997476356910084024775314825287420315338308691250766944);
\coordinate (XXC)  at (4+8, 2.2360679774997896964091736687312762354406183596115257242708972454);
\coordinate (XXD)  at (4+8+0.8944271909999158785636694674925104941762473438446102897083588981-0.05, 0.4472135954999579392818347337462552470881236719223051448541794490+0.05);

\filldraw[thick, color=gray!40] (-1,3) -- (0,3) -- (0,5) -- (-1,3);
\draw (-1,3) -- (0,3) -- (0,5) -- (-1,3);
\draw (4-1,3) -- (4,3) -- (4,5) -- (4-1,3);

\filldraw[thick, color=gray!40] (14-4,3) -- (15-4,3) -- (14-4,5) -- (14-4,3);
\draw (14,3) -- (15,3) -- (14,5) -- (14,3);
\draw (14-4,3) -- (15-4,3) -- (14-4,5) -- (14-4,3);

\draw (2,0) -- (A) -- (C) -- (B);
\draw (D) -- (C);
\draw (2+4,0) -- (AA) -- (CC) -- (BB);
\draw (DD) -- (CC);
\draw (8,0) -- (XA) -- (XC) -- (XD);
\draw (XC) -- (XB);
\draw (8+4,0) -- (XXA) -- (XXC) -- (XXD);
\draw (XXC) -- (XXB);

\foreach \inn in {0,4}{
\draw[very thick, ->] (\inn+0.25,2.75) -- (\inn+0.75,2.25);
\draw[very thick, ->] (\inn+9.75,2.75) -- (\inn+9.25,2.25);
}

\end{tikzpicture}
\caption{Pinwheel-variant substitution}
\label{pinwheel_variant_substitution}
\end{figure}
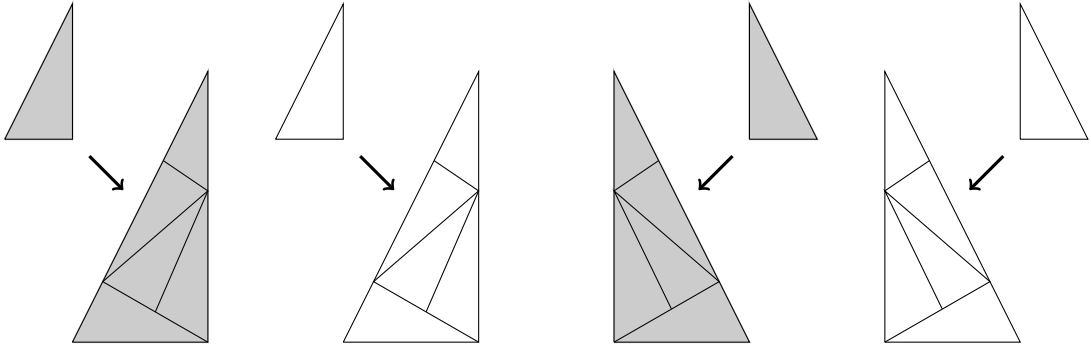

\begin{figure}[H]
\centering
\begin{tikzpicture}[scale=1]
\filldraw[thick, color=gray!40] (0,0) -- (2,0) -- (2,4) -- (0,0);
\draw (0,0) -- (2,0) -- (2,4) -- (0,0);
\draw (4,0) -- (6,0) -- (6,4) -- (4,0);

\filldraw[thick, color=gray!40] (8,0) -- (10,0) -- (8,4) -- (8,0);
\draw (8,0) -- (10,0) -- (8,4) -- (8,0);
\draw (12,0) -- (14,0) -- (12,4) -- (12,0);

\coordinate (A)  at (0.4472135954999579392818347337462552470881236719223051448541794490, 0.8944271909999158785636694674925104941762473438446102897083588981);
\coordinate (B)  at (1.3416407864998738178455042012387657412643710157669154345625383472, 2.6832815729997476356910084024775314825287420315338308691250766944);
\coordinate (C)  at (2, 2.2360679774997896964091736687312762354406183596115257242708972454);
\coordinate (D)  at (1.2236067977499789696409173668731276235440618359611525724270897245, 0.4472135954999579392818347337462552470881236719223051448541794490);

\coordinate (AA)  at (4+0.4472135954999579392818347337462552470881236719223051448541794490, 0.8944271909999158785636694674925104941762473438446102897083588981);
\coordinate (BB)  at (4+1.3416407864998738178455042012387657412643710157669154345625383472, 2.6832815729997476356910084024775314825287420315338308691250766944);
\coordinate (CC)  at (4+2, 2.2360679774997896964091736687312762354406183596115257242708972454);
\coordinate (DD)  at (4+1.2236067977499789696409173668731276235440618359611525724270897245, 0.4472135954999579392818347337462552470881236719223051448541794490);

\coordinate (XA)  at (8+1.5527864045000420607181652662537447529118763280776948551458205509, 0.8944271909999158785636694674925104941762473438446102897083588981);
\coordinate (XB)  at (8+0.6583592135001261821544957987612342587356289842330845654374616527, 2.6832815729997476356910084024775314825287420315338308691250766944);
\coordinate (XC)  at (8, 2.2360679774997896964091736687312762354406183596115257242708972454);
\coordinate (XD)  at (8+0.8944271909999158785636694674925104941762473438446102897083588981-0.05, 0.4472135954999579392818347337462552470881236719223051448541794490+0.05);

\coordinate (XXA)  at (4+8+1.5527864045000420607181652662537447529118763280776948551458205509, 0.8944271909999158785636694674925104941762473438446102897083588981);
\coordinate (XXB)  at (4+8+0.6583592135001261821544957987612342587356289842330845654374616527, 2.6832815729997476356910084024775314825287420315338308691250766944);
\coordinate (XXC)  at (4+8, 2.2360679774997896964091736687312762354406183596115257242708972454);
\coordinate (XXD)  at (4+8+0.8944271909999158785636694674925104941762473438446102897083588981-0.05, 0.4472135954999579392818347337462552470881236719223051448541794490+0.05);

\draw (2,0) -- (A) -- (C) -- (B);
\draw (D) -- (C);
\draw (2+4,0) -- (AA) -- (CC) -- (BB);
\draw (DD) -- (CC);
\draw (8,0) -- (XA) -- (XC) -- (XD);
\draw (XC) -- (XB);
\draw (8+4,0) -- (XXA) -- (XXC) -- (XXD);
\draw (XXC) -- (XXB);

\draw[line width=1pt,<-] (2-1.341615029966348,0.29825827776825253) -- (2-0.29806514130091566, 0.8944529391925291) -- (2-0.8943241815476379, 1.1926468381383357) -- (2-0.8942598027251931+0.1, 1.9380028278579613) -- (2-0.29787200483357956+0.1, 3.130520908351406);
\draw[line width=1pt,->] (4+2-1.341615029966348,0.29825827776825253) -- (4+2-0.29806514130091566, 0.8944529391925291) -- (4+2-0.8943241815476379, 1.1926468381383357) -- (4+2-0.8942598027251931+0.1, 1.9380028278579613) -- (4+2-0.29787200483357956+0.1, 3.130520908351406);

\draw[line width=1pt, <-] (8+0.29787200483357956-0.1, 3.130520908351406) -- (8+0.8942598027251929, 1.9380028278579615-0.1) -- (8+0.8943241815476379, 1.1926468381383357) -- (8+0.29806514130091566, 0.8944529391925291) -- (8+1.341615029966348, 0.2982582777682524);
\draw[line width=1pt, ->] (4+8+0.29787200483357956-0.1, 3.130520908351406) -- (4+8+0.8942598027251929-0.1, 1.9380028278579615) -- (4+8+0.8943241815476379, 1.1926468381383357) -- (4+8+0.29806514130091566, 0.8944529391925291) -- (4+8+1.341615029966348, 0.2982582777682524);
\end{tikzpicture}
\caption{Total orders over 1-supertiles of the Pinwheel-variant substitution}
\label{pinwheel_variant_substitution_order}
\end{figure}
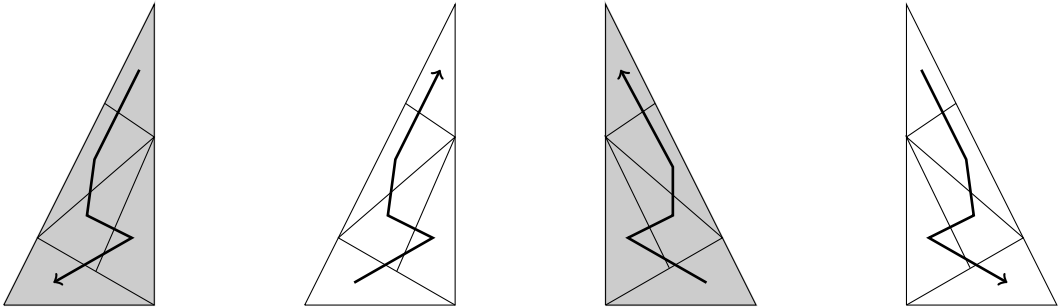

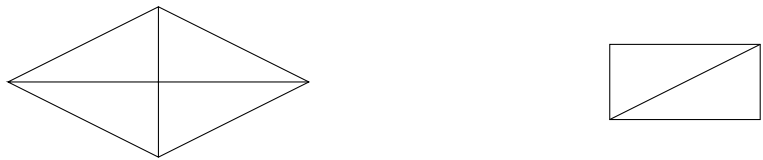
\begin{figure}[H]
\centering
\begin{tikzpicture}
\draw (2,0) -- (-2,0) -- (0,-1) -- (0,1) -- (-2,0);
\draw (0,1) -- (2,0) -- (0,-1);

\draw (6,-0.5) rectangle (8,0.5);
\draw (6,-0.5) -- (8,0.5);

\end{tikzpicture}
\caption{The regions $R_1$ and $R_2$, from left to right respectively}
\label{pinwheel_rhombus_and_rectangle_prototiles}
\end{figure}

\begin{figure}[H]
\centering
\begin{subfigure}[b]{0.18\textwidth}
\centering
\tikz[remember picture]\node[inner sep=0pt,outer sep=0pt] (a){\includegraphics[width=\linewidth]{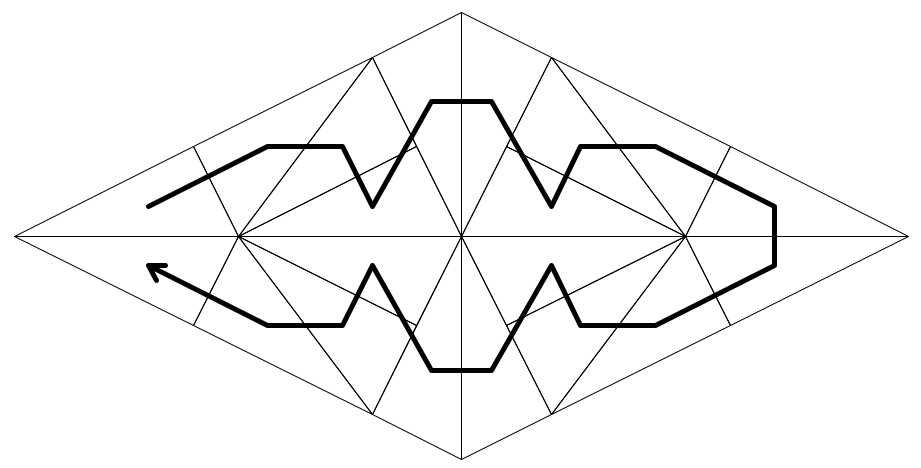}};
\end{subfigure}
\hfill
\begin{subfigure}[b]{0.18\textwidth}
\centering
\tikz[remember picture]\node[inner sep=0pt,outer sep=0pt] (b){\includegraphics[width=\linewidth]{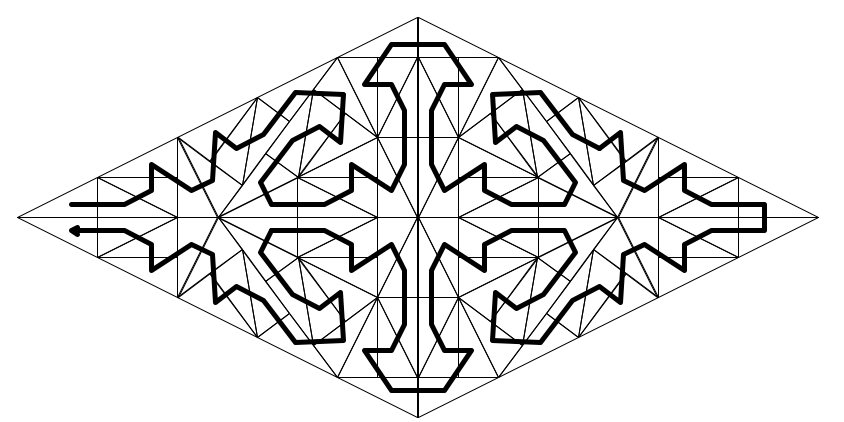}};
\end{subfigure}
\hfill  
\begin{subfigure}[b]{0.18\textwidth}
\centering
\tikz[remember picture]\node[inner sep=0pt,outer sep=0pt] (c){\includegraphics[width=\linewidth]{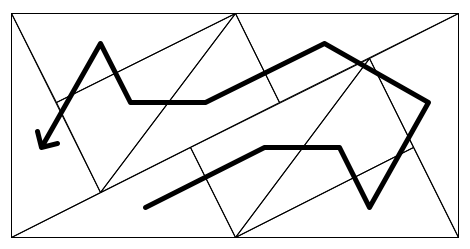}};
\end{subfigure}
\hfill
\begin{subfigure}[b]{0.18\textwidth}
\centering
\tikz[remember picture]\node[inner sep=0pt,outer sep=0pt] (d){\includegraphics[width=\linewidth]{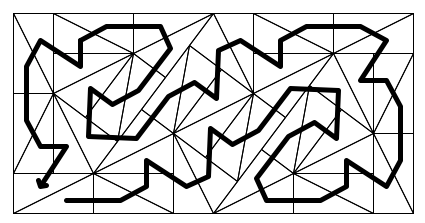}};
\end{subfigure}
\tikz[remember picture,overlay]\draw[line width=2pt,-stealth] ([xshift=5pt]a.east) -- ([xshift=35pt]a.east)node[midway,above,text=black,font=\LARGE\bfseries\sffamily] {};
\tikz[remember picture,overlay]\draw[line width=2pt,-stealth] ([xshift=5pt]c.east) -- ([xshift=35pt]c.east)node[midway,above,text=black,font=\LARGE\bfseries\sffamily] {};
\caption{First two approximants of $F^1_{pin}$ and $F^2_{pin}$}
\label{pinwheel_rhombus_orders}
\end{figure}

\begin{figure}[H]
\centering
\begin{subfigure}[b]{0.4\textwidth}
\centering
\tikz[remember picture]\node[inner sep=0pt,outer sep=0pt] (a){\includegraphics[width=\linewidth]{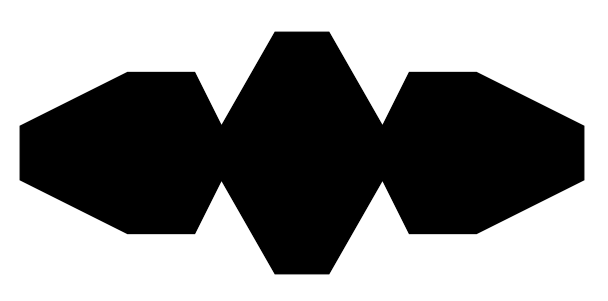}};
\end{subfigure}
\hfill
\begin{subfigure}[b]{0.4\textwidth}
\centering
\tikz[remember picture]\node[inner sep=0pt,outer sep=0pt] (b){\includegraphics[width=\linewidth]{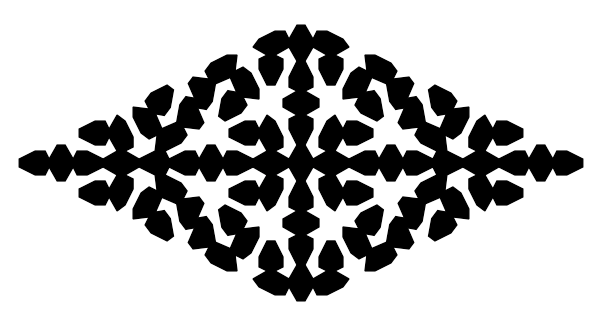}};
\end{subfigure}
\hfill
\tikz[remember picture,overlay]\draw[line width=2pt,-stealth] ([xshift=10pt]a.east) -- ([xshift=30pt]a.east)node[midway,above,text=black,font=\LARGE\bfseries\sffamily] {};
\tikz[remember picture,overlay]\draw[line width=2pt,-stealth] ([xshift=10pt]b.east) -- ([xshift=30pt]b.east)node[midway,below,text=black,font=\LARGE\bfseries\sffamily] {};
\begin{subfigure}[b]{\textwidth}
\centering
\tikz[remember picture]\node[inner sep=0pt,outer sep=0pt] (c){\includegraphics[width=\linewidth]{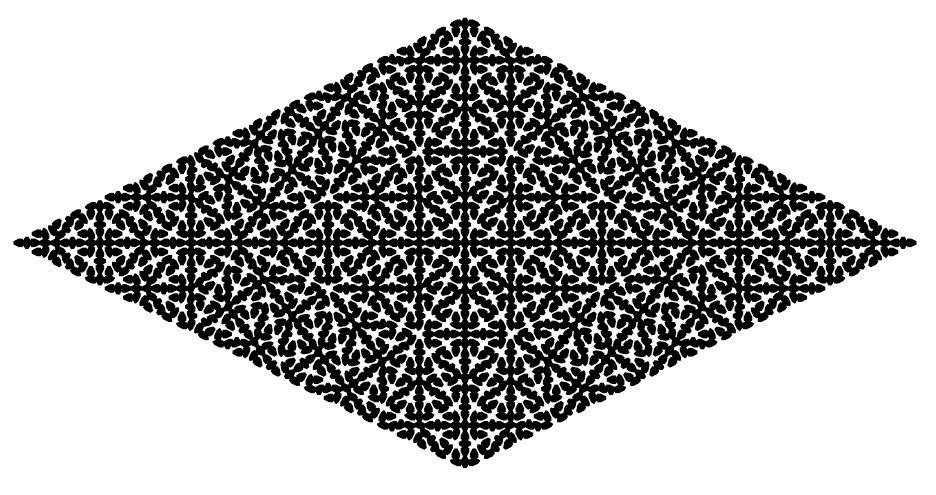}};

\end{subfigure}
\caption{1st, 3rd and 5th approximants of $F^1_{pin}$ - filled version. The 5th approximant is scaled up for illustration purposes.}
\label{pinwheel_rhombus_filled_approximants}
\end{figure}

\begin{figure}[H]
\centering
\begin{subfigure}[b]{0.4\textwidth}
\centering
\tikz[remember picture]\node[inner sep=0pt,outer sep=0pt] (a){\includegraphics[width=\linewidth]{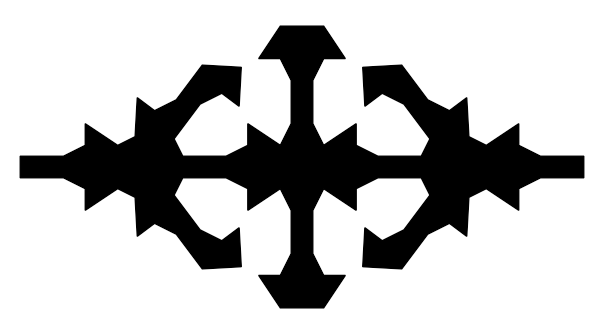}};
\end{subfigure}
\hfill
\begin{subfigure}[b]{0.4\textwidth}
\centering
\tikz[remember picture]\node[inner sep=0pt,outer sep=0pt] (b){\includegraphics[width=\linewidth]{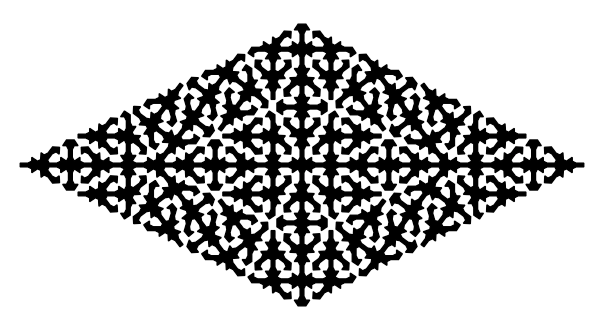}};
\end{subfigure}
\hfill
\tikz[remember picture,overlay]\draw[line width=2pt,-stealth] ([xshift=10pt]a.east) -- ([xshift=30pt]a.east)node[midway,above,text=black,font=\LARGE\bfseries\sffamily] {};
\tikz[remember picture,overlay]\draw[line width=2pt,-stealth] ([xshift=10pt]b.east) -- ([xshift=30pt]b.east)node[midway,below,text=black,font=\LARGE\bfseries\sffamily] {};
\caption{2nd and 4th approximants of $F^1_{pin}$ - filled version.}
\label{pinwheel_rhombus_filled_approximants2}
\end{figure}

\begin{figure}[H]
\centering
\begin{subfigure}[b]{0.4\textwidth}
\centering
\tikz[remember picture]\node[inner sep=0pt,outer sep=0pt] (a){\includegraphics[width=\linewidth]{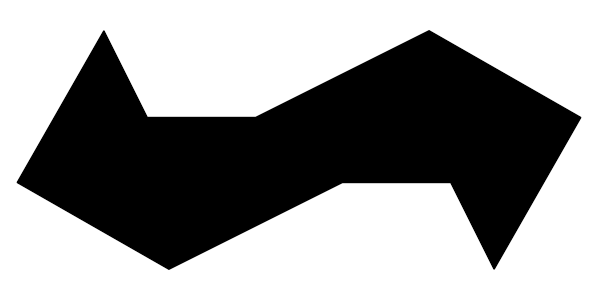}};
\end{subfigure}
\hfill
\begin{subfigure}[b]{0.4\textwidth}
\centering
\tikz[remember picture]\node[inner sep=0pt,outer sep=0pt] (b){\includegraphics[width=\linewidth]{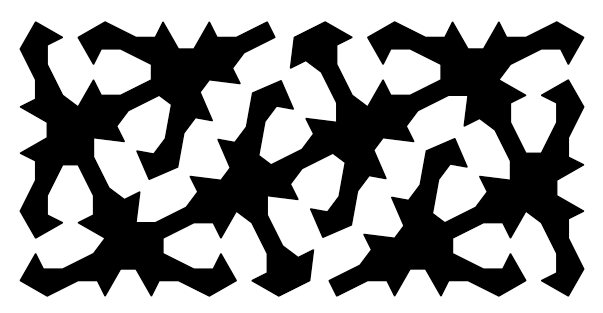}};
\end{subfigure}
\hfill
\tikz[remember picture,overlay]\draw[line width=2pt,-stealth] ([xshift=10pt]a.east) -- ([xshift=30pt]a.east)node[midway,above,text=black,font=\LARGE\bfseries\sffamily] {};
\tikz[remember picture,overlay]\draw[line width=2pt,-stealth] ([xshift=10pt]b.east) -- ([xshift=30pt]b.east)node[midway,below,text=black,font=\LARGE\bfseries\sffamily] {};
\begin{subfigure}[b]{\textwidth}
\centering
\includegraphics[scale=0.35]{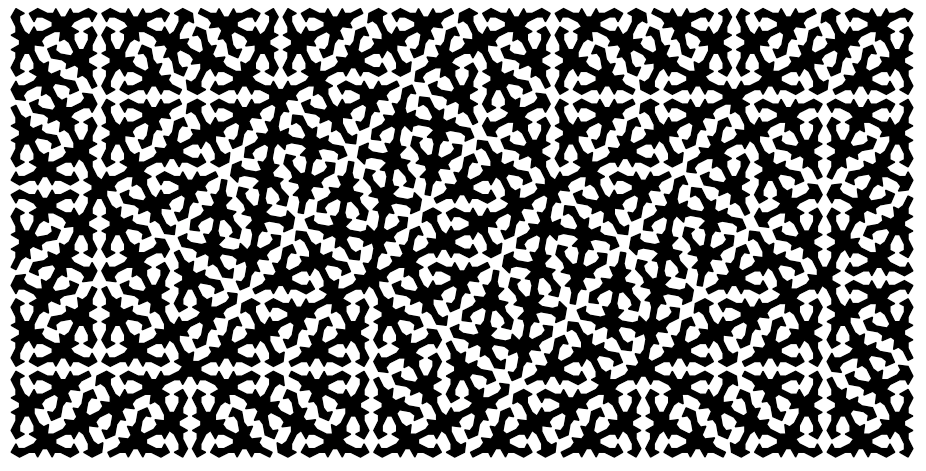}
\end{subfigure}
\caption{1st, 3rd and 5th approximants of $F^2_{pin}$ - filled version. The 5th approximant is scaled up for illustration purposes.}
\label{pinwheel_rectangles_filled_approximants}
\end{figure}


\begin{figure}[H]
\centering
\begin{subfigure}[b]{0.4\textwidth}
\centering
\tikz[remember picture]\node[inner sep=0pt,outer sep=0pt] (a){\includegraphics[width=\linewidth]{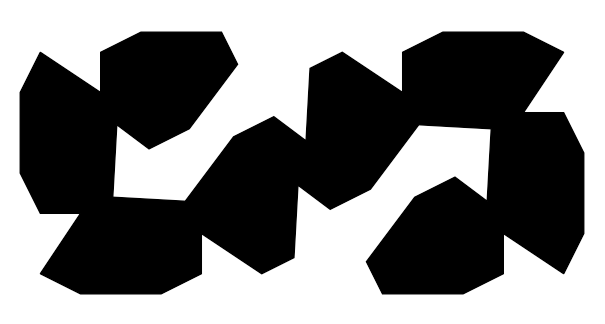}};
\end{subfigure}
\hfill
\begin{subfigure}[b]{0.4\textwidth}
\centering
\tikz[remember picture]\node[inner sep=0pt,outer sep=0pt] (b){\includegraphics[width=\linewidth]{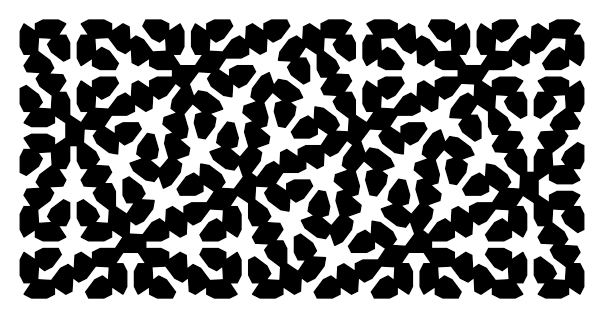}};
\end{subfigure}
\hfill
\tikz[remember picture,overlay]\draw[line width=2pt,-stealth] ([xshift=10pt]a.east) -- ([xshift=30pt]a.east)node[midway,above,text=black,font=\LARGE\bfseries\sffamily] {};
\tikz[remember picture,overlay]\draw[line width=2pt,-stealth] ([xshift=10pt]b.east) -- ([xshift=30pt]b.east)node[midway,below,text=black,font=\LARGE\bfseries\sffamily] {};
\begin{subfigure}[b]{\textwidth}
\centering
\tikz[remember picture]\node[inner sep=0pt,outer sep=0pt] (c){\includegraphics[scale=0.4]{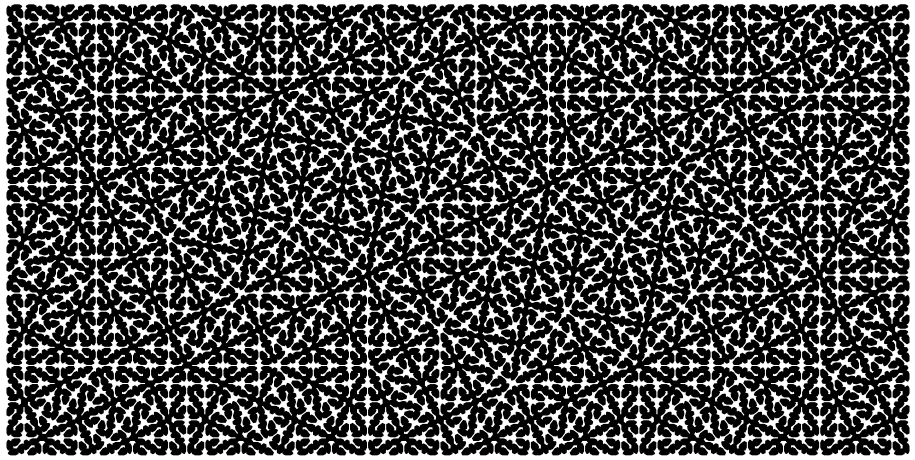}};

\end{subfigure}
\caption{2nd, 4th and 6th approximants of $F^2_{pin}$ - filled version. The 6th approximant is scaled up for illustration purposes.}
\label{pinwheel_rectangles_filled_approximants2}
\end{figure}


\end{example}

\begin{example}[Penrose-Robinson-variant]
Start with the substitution given in Figure \ref{penrose}. Its domain consists of $12$ tiles and their rotations, which are congruent copy of two different shapes, 
an isosceles triangle with side lengths $1,1,(1+\sqrt{5})/2$ and an isosceles triangle with side lengths $1,1,(\sqrt{5}-1)/2$. The expansion factor for this substitution is the golden mean $(1+\sqrt{5})/2$. It is a variation of \emph{Penrose-Robinson\ substitution} \cite{Web_Tiling_Encyclopedia}.
Define the total orders described in Figure \ref{penrose_order}. Using the described total orders we generate space filling curves $F_{star}, F_{deca}$ that fill the supports of the patches shown in Figure \ref{regions}, respectively, such that  $F_{star}(0)=F_{star}(1)$ and $F_{deca}(0)=F_{deca}(1)$. The associated approximants of $F_{star}$ and $F_{deca}$ are shown in Figure \ref{star_approximants}, Figure
\ref{tengon_approximants} and Figure \ref{comparison_star_deca}.
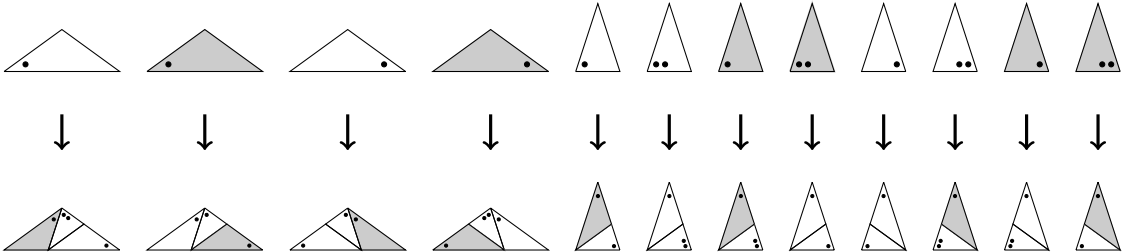
\begin{figure}[H]
\centering
\begin{tikzpicture}[scale=0.95]
\centering
\pensla{0}{0}{0}{1}
\penslb{2}{0}{0}{1}
\pensra{5.618033988749894848204586834365638117720309179805762862135}{0}{0}{1}
\pensrb{7.618033988749894848204586834365638117720309179805762862135}{0}{0}{1}

\pentla{8}{0}{0}{1}
\pentlb{9}{0}{0}{1}
\pentlc{10}{0}{0}{1}
\pentld{11}{0}{0}{1}
\pentra{12.618033988749894848204586834365638117720309179805762862135}{0}{0}{1}
\pentrb{13.618033988749894848204586834365638117720309179805762862135}{0}{0}{1}
\pentrc{14.618033988749894848204586834365638117720309179805762862135}{0}{0}{1}
\pentrd{15.618033988749894848204586834365638117720309179805762862135}{0}{0}{1}

\foreach \ina in {0,2,4,6}{
\draw[very thick,->] (\ina+0.8090169943749474241022934171828190588601545899028814310675,-0.6)--(\ina+0.8090169943749474241022934171828190588601545899028814310675,-1.1);
}
\foreach \ina in {8,9,10,11,12,13,14,15}{
\draw[very thick,->] (\ina+0.3090169943749474241022934171828190588601545899028814310675,-0.6)--(\ina+0.3090169943749474241022934171828190588601545899028814310675,-1.1);
}
\penssla{0}{-2.5}{0}{0.618033988749894848204586834365638117720309179805762862135}
\pensslb{2}{-2.5}{0}{0.618033988749894848204586834365638117720309179805762862135}
\penssra{5.618033988749894848204586834365638117720309179805762862135}{-2.5}{0}{0.618033988749894848204586834365638117720309179805762862135}
\penssrb{7.618033988749894848204586834365638117720309179805762862135}{-2.5}{0}{0.618033988749894848204586834365638117720309179805762862135}

\penttla{8}{-2.5}{0}{0.618033988749894848204586834365638117720309179805762862135}
\penttlb{9}{-2.5}{0}{0.618033988749894848204586834365638117720309179805762862135}
\penttlc{10}{-2.5}{0}{0.618033988749894848204586834365638117720309179805762862135}
\penttld{11}{-2.5}{0}{0.618033988749894848204586834365638117720309179805762862135}
\penttra{12.618033988749894848204586834365638117720309179805762862135}{-2.5}{0}{0.618033988749894848204586834365638117720309179805762862135}
\penttrb{13.618033988749894848204586834365638117720309179805762862135}{-2.5}{0}{0.618033988749894848204586834365638117720309179805762862135}
\penttrc{14.618033988749894848204586834365638117720309179805762862135}{-2.5}{0}{0.618033988749894848204586834365638117720309179805762862135}
\penttrd{15.618033988749894848204586834365638117720309179805762862135}{-2.5}{0}{0.618033988749894848204586834365638117720309179805762862135}
\end{tikzpicture}
\caption{Penrose-Robinson-variant substitution. 1-supertiles are scaled down for demonstration.}
\label{penrose}
\end{figure}

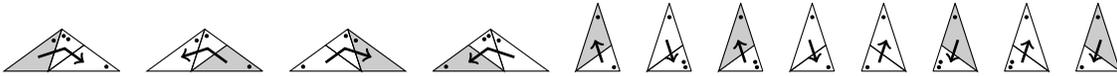
\begin{figure}[H]
\centering
\begin{tikzpicture}[scale=0.95]
\centering

\penssla{0}{-2.5}{0}{0.618033988749894848204586834365638117720309179805762862135}
\pensslb{2}{-2.5}{0}{0.618033988749894848204586834365638117720309179805762862135}
\penssra{5.618033988749894848204586834365638117720309179805762862135}{-2.5}{0}{0.618033988749894848204586834365638117720309179805762862135}
\penssrb{7.618033988749894848204586834365638117720309179805762862135}{-2.5}{0}{0.618033988749894848204586834365638117720309179805762862135}

\penttla{8}{-2.5}{0}{0.618033988749894848204586834365638117720309179805762862135}
\penttlb{9}{-2.5}{0}{0.618033988749894848204586834365638117720309179805762862135}
\penttlc{10}{-2.5}{0}{0.618033988749894848204586834365638117720309179805762862135}
\penttld{11}{-2.5}{0}{0.618033988749894848204586834365638117720309179805762862135}
\penttra{12.618033988749894848204586834365638117720309179805762862135}{-2.5}{0}{0.618033988749894848204586834365638117720309179805762862135}
\penttrb{13.618033988749894848204586834365638117720309179805762862135}{-2.5}{0}{0.618033988749894848204586834365638117720309179805762862135}
\penttrc{14.618033988749894848204586834365638117720309179805762862135}{-2.5}{0}{0.618033988749894848204586834365638117720309179805762862135}
\penttrd{15.618033988749894848204586834365638117720309179805762862135}{-2.5}{0}{0.618033988749894848204586834365638117720309179805762862135}

\draw[line width=1pt, ->] (0.4756836610416140907689600838494857255268212565695480977341666666,0.1959284174308243763895686515463575895325508125477153303574241602-2.5) -- (0.8483616572915790401704890286380317647669243165048023851128738522, 0.3170188387650511907054797777931273811352328780419167408157685481-2.5) -- (1.1180339887498948482045868343656381177203091798057628621354486227, 0.1210904213342268143159111262467697916026820654942014104583443878-2.5);
\draw[line width=1pt, <-] (2+0.4756836610416140907689600838494857255268212565695480977341666666,0.1959284174308243763895686515463575895325508125477153303574241602-2.5) -- (2+0.8483616572915790401704890286380317647669243165048023851128738522, 0.3170188387650511907054797777931273811352328780419167408157685481-2.5) -- (2+1.1180339887498948482045868343656381177203091798057628621354486227, 0.1210904213342268143159111262467697916026820654942014104583443878-2.5);

\draw[line width=1pt, <-] (5.618033988749894848204586834365638117720309179805762862135-0.4756836610416140907689600838494857255268212565695480977341666666,0.1959284174308243763895686515463575895325508125477153303574241602-2.5) -- (5.618033988749894848204586834365638117720309179805762862135-0.8483616572915790401704890286380317647669243165048023851128738522, 0.3170188387650511907054797777931273811352328780419167408157685481-2.5) -- (5.618033988749894848204586834365638117720309179805762862135-1.1180339887498948482045868343656381177203091798057628621354486227, 0.1210904213342268143159111262467697916026820654942014104583443878-2.5);
\draw[line width=1pt, ->] (2+5.618033988749894848204586834365638117720309179805762862135-0.4756836610416140907689600838494857255268212565695480977341666666,0.1959284174308243763895686515463575895325508125477153303574241602-2.5) -- (2+5.618033988749894848204586834365638117720309179805762862135-0.8483616572915790401704890286380317647669243165048023851128738522, 0.3170188387650512140680802986025810241699218750000000000000000000-2.5) -- (2+5.618033988749894848204586834365638117720309179805762862135-1.1180339887498948482045868343656381177203091798057628621354486227, 0.1210904213342268143159111262467697916026820654942014104583443878-2.5);

\draw[line width=1pt, <-] (8+0.2696723314583158080340978057276063529533848633009604770225747704,0.4381092600992780050213909040398971727379149435361181512741129360-2.5) -- (8+0.3726779962499649494015289447885460392401030599352542873784828742, 0.1210904213342268143159111262467697916026820654942014104583443878-2.5);
\draw[line width=1pt, ->] (1+8+0.2696723314583158080340978057276063529533848633009604770225747704,0.4381092600992780050213909040398971727379149435361181512741129360-2.5) -- (1+8+0.3726779962499649494015289447885460392401030599352542873784828742, 0.1210904213342268143159111262467697916026820654942014104583443878-2.5);
\draw[line width=1pt, <-] (2+8+0.2696723314583158080340978057276063529533848633009604770225747704,0.4381092600992780050213909040398971727379149435361181512741129360-2.5) -- (2+8+0.3726779962499649494015289447885460392401030599352542873784828742, 0.1210904213342268143159111262467697916026820654942014104583443878-2.5);
\draw[line width=1pt, ->] (2+1+8+0.2696723314583158080340978057276063529533848633009604770225747704,0.4381092600992780050213909040398971727379149435361181512741129360-2.5) -- (2+1+8+0.3726779962499649494015289447885460392401030599352542873784828742, 0.1210904213342268143159111262467697916026820654942014104583443878-2.5);

\draw[line width=1pt, <-] (12+0.618033988749894848204586834365638117720309179805762862135-0.2696723314583158080340978057276063529533848633009604770225747704,0.4381092600992780050213909040398971727379149435361181512741129360-2.5) -- (12+0.618033988749894848204586834365638117720309179805762862135-0.3726779962499649494015289447885460392401030599352542873784828742, 0.1210904213342268143159111262467697916026820654942014104583443878-2.5);
\draw[line width=1pt, ->] (1+12+0.618033988749894848204586834365638117720309179805762862135-0.2696723314583158080340978057276063529533848633009604770225747704,0.4381092600992780050213909040398971727379149435361181512741129360-2.5) -- (1+12+0.618033988749894848204586834365638117720309179805762862135-0.3726779962499649494015289447885460392401030599352542873784828742, 0.1210904213342268143159111262467697916026820654942014104583443878-2.5);
\draw[line width=1pt, <-] (2+12+0.618033988749894848204586834365638117720309179805762862135-0.2696723314583158080340978057276063529533848633009604770225747704,0.4381092600992780050213909040398971727379149435361181512741129360-2.5) -- (2+12+0.618033988749894848204586834365638117720309179805762862135-0.3726779962499649494015289447885460392401030599352542873784828742, 0.1210904213342268143159111262467697916026820654942014104583443878-2.5);
\draw[line width=1pt, ->] (2+1+12+0.618033988749894848204586834365638117720309179805762862135-0.2696723314583158080340978057276063529533848633009604770225747704,0.4381092600992780050213909040398971727379149435361181512741129360-2.5) -- (2+1+12+0.618033988749894848204586834365638117720309179805762862135-0.3726779962499649494015289447885460392401030599352542873784828742, 0.1210904213342268143159111262467697916026820654942014104583443878-2.5);
\end{tikzpicture}
\caption{Total orders over 1-supertiles of the Penrose-Robinson-variant substitution}
\label{penrose_order}
\end{figure}

\begin{figure}[H]
\centering
\begin{tikzpicture}[scale=1.7]
\centering
\foreach \inn in {-5}{
\pensla{\inn}{0}{18}{1}
\pensra{\inn}{0}{198}{1}
\pensla{\inn}{0}{18+72}{1}
\pensra{\inn}{0}{198+72}{1}
\pensla{\inn}{0}{18+2*72}{1}
\pensra{\inn}{0}{198+2*72}{1}
\pensla{\inn}{0}{18+3*72}{1}
\pensra{\inn}{0}{198+3*72}{1}
\pensla{\inn}{0}{18+4*72}{1}
\pensra{\inn}{0}{198+4*72}{1}
}

\foreach \in in {0}{
\pentrd{\in}{-1.04}{-18}{1}
\pentlc{\in}{-1.04}{18}{1}
\pentrd{\in+ 0.9510565162951535721164393333793821434056986341257502224473056444}{-1-0.3090169943749474241022934171828190588601545899028814310677243113}{-18+72}{1}
\pentlc{\in+ 0.9510565162951535721164393333793821434056986341257502224473056444}{-1-0.3090169943749474241022934171828190588601545899028814310677243113}{18+72}{1}
\pentrd{\in- 0.9510565162951535721164393333793821434056986341257502224473056444}{-1-0.3090169943749474241022934171828190588601545899028814310677243113}{-18-72}{1}
\pentlc{\in- 0.9510565162951535721164393333793821434056986341257502224473056444}{-1-0.3090169943749474241022934171828190588601545899028814310677243113}{18-72}{1}

\pentrd{\in+ 0.5877852522924732}{-1+0.8090169943749473}{-18+2*72}{1}
\pentlc{\in+ 0.5877852522924732}{-1+0.8090169943749473}{18+2*72}{1}
\pentrd{\in- 0.5877852522924732}{-1+0.8090169943749473}{-18-2*72}{1}
\pentlc{\in- 0.5877852522924732}{-1+0.8090169943749473}{18-2*72}{1}
}

\end{tikzpicture}
\caption{A star and a decagon.}
\label{regions}
\end{figure}
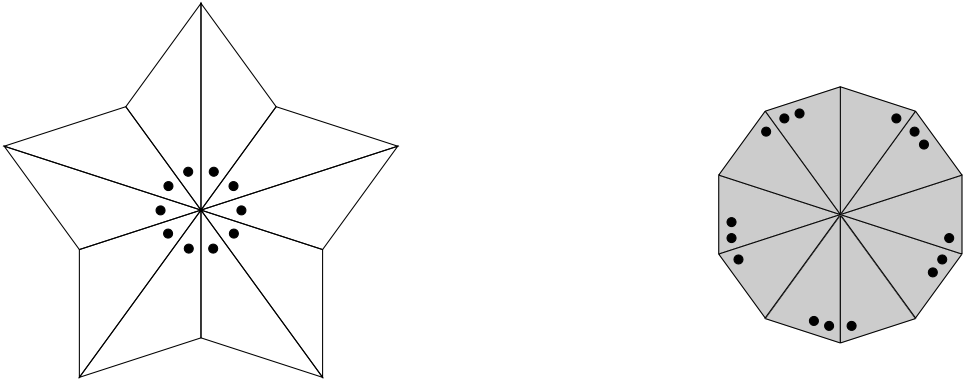

\begin{figure}[H]
\centering
\begin{subfigure}[b]{0.18\textwidth}
\centering
\tikz[remember picture]\node[inner sep=0pt,outer sep=0pt] (a){\includegraphics[width=\linewidth]{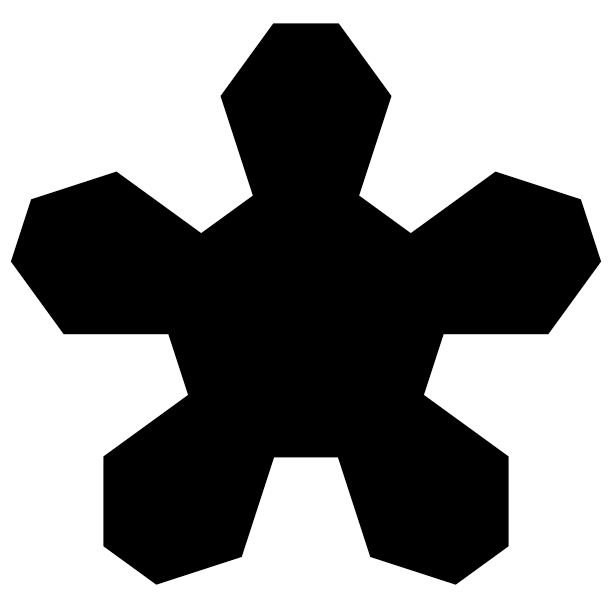}};
\end{subfigure}
\hfill
\begin{subfigure}[b]{0.18\textwidth}
\centering
\tikz[remember picture]\node[inner sep=0pt,outer sep=0pt] (b){\includegraphics[width=\linewidth]{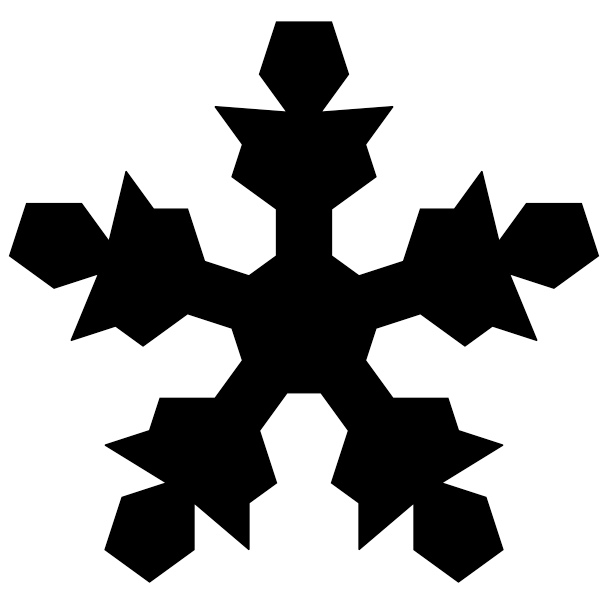}};
\end{subfigure}
\hfill
\begin{subfigure}[b]{0.18\textwidth}
\centering
\tikz[remember picture]\node[inner sep=0pt,outer sep=0pt] (c){\includegraphics[width=\linewidth]{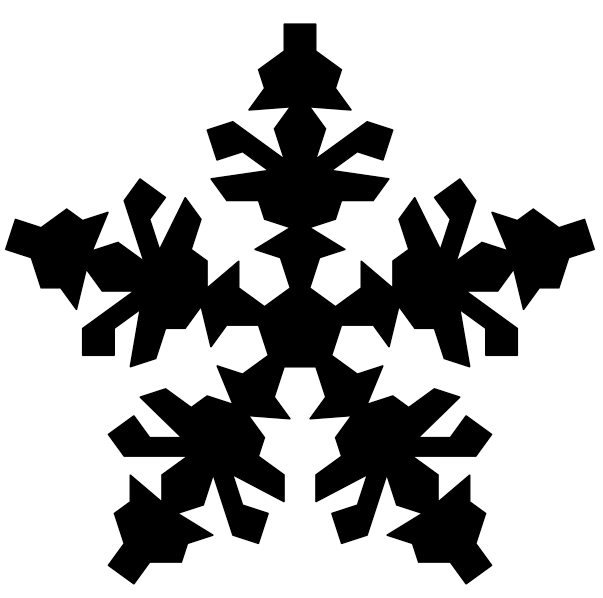}};
\end{subfigure}
\hfill
\begin{subfigure}[b]{0.18\textwidth}
\centering
\tikz[remember picture]\node[inner sep=0pt,outer sep=0pt] (d){\includegraphics[width=\linewidth]{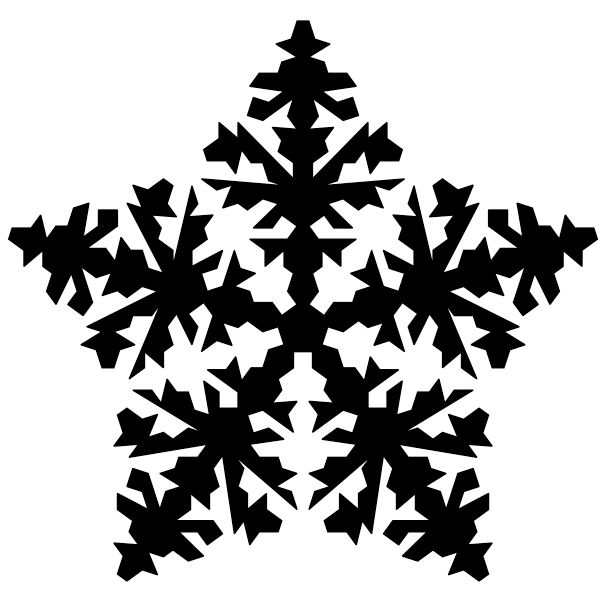}};
\end{subfigure}
\tikz[remember picture,overlay]\draw[line width=2pt,-stealth] ([xshift=5pt]a.east) -- ([xshift=30pt]a.east)node[midway,below,text=black,font=\LARGE\bfseries\sffamily] {};
\tikz[remember picture,overlay]\draw[line width=2pt,-stealth] ([xshift=5pt]b.east) -- ([xshift=30pt]b.east)node[midway,above,text=black,font=\LARGE\bfseries\sffamily] {};
\tikz[remember picture,overlay]\draw[line width=2pt,-stealth] ([xshift=5pt]c.east) -- ([xshift=30pt]c.east)node[midway,above,text=black,font=\LARGE\bfseries\sffamily] {};

\caption{First four approximants of $F_{star}$ - filled version}
\label{star_approximants}
\end{figure}

\begin{figure}[H]
\centering
\begin{subfigure}[b]{0.18\textwidth}
\centering
\tikz[remember picture]\node[inner sep=0pt,outer sep=0pt] (a){\includegraphics[width=\linewidth]{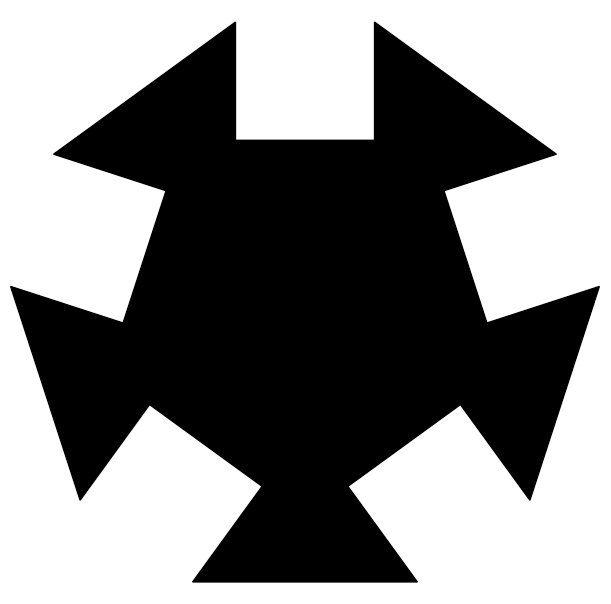}};
\end{subfigure}
\hfill
\begin{subfigure}[b]{0.18\textwidth}
\centering
\tikz[remember picture]\node[inner sep=0pt,outer sep=0pt] (b){\includegraphics[width=\linewidth]{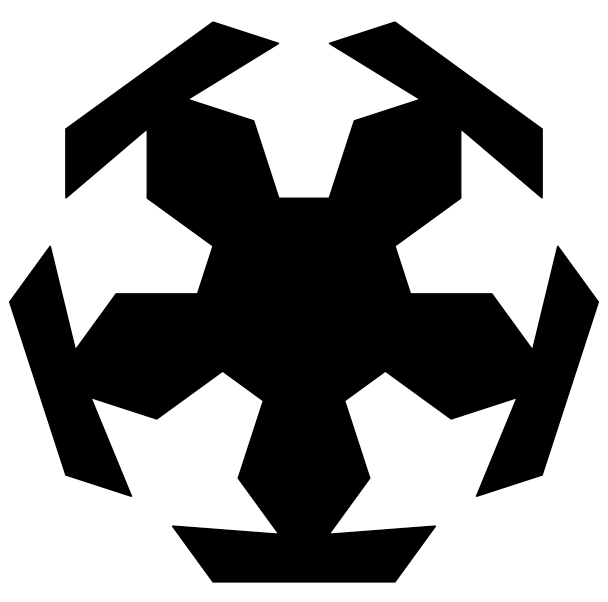}};
\end{subfigure}
\hfill
\begin{subfigure}[b]{0.18\textwidth}
\centering
\tikz[remember picture]\node[inner sep=0pt,outer sep=0pt] (c){\includegraphics[width=\linewidth]{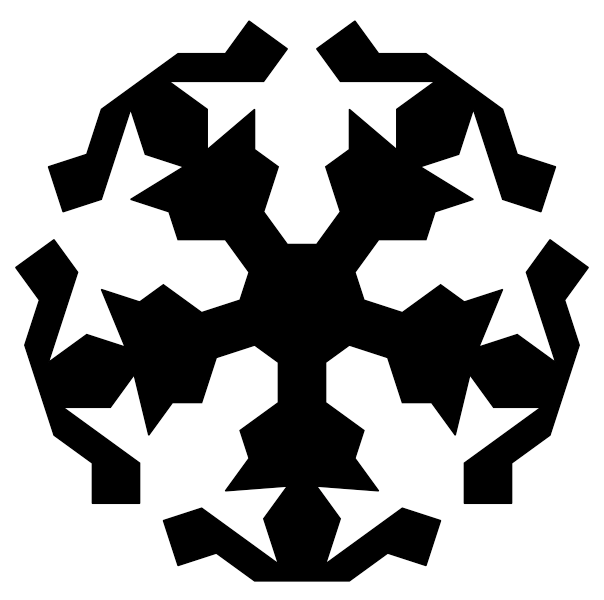}};
\end{subfigure}
\hfill
\begin{subfigure}[b]{0.18\textwidth}
\centering
\tikz[remember picture]\node[inner sep=0pt,outer sep=0pt] (d){\includegraphics[width=\linewidth]{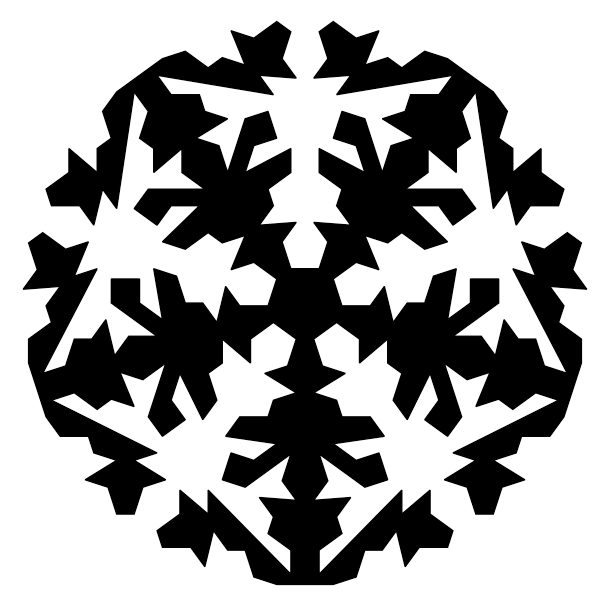}};
\end{subfigure}
\tikz[remember picture,overlay]\draw[line width=2pt,-stealth] ([xshift=5pt]a.east) -- ([xshift=30pt]a.east)node[midway,below,text=black,font=\LARGE\bfseries\sffamily] {};
\tikz[remember picture,overlay]\draw[line width=2pt,-stealth] ([xshift=5pt]b.east) -- ([xshift=30pt]b.east)node[midway,above,text=black,font=\LARGE\bfseries\sffamily] {};
\tikz[remember picture,overlay]\draw[line width=2pt,-stealth] ([xshift=5pt]c.east) -- ([xshift=30pt]c.east)node[midway,above,text=black,font=\LARGE\bfseries\sffamily] {};

\caption{First four approximants of $F_{deca}$ - filled version}
\label{tengon_approximants}
\end{figure}

\begin{figure}[H]
\centering
\begin{subfigure}[b]{0.45\textwidth}
\centering
\includegraphics[width=\textwidth]{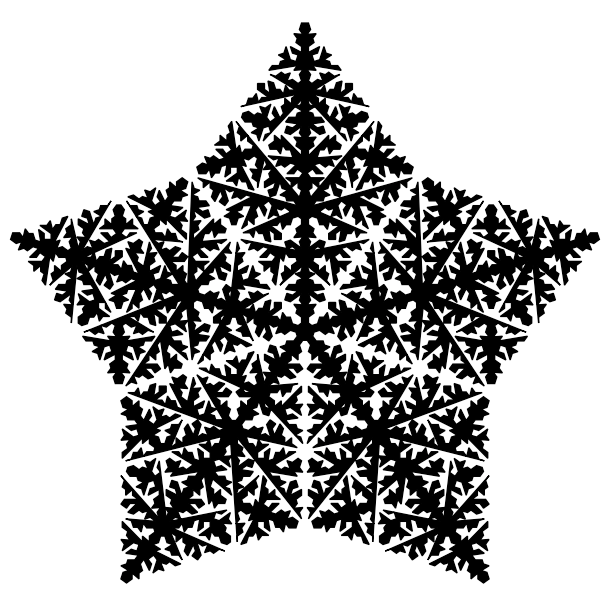}
\end{subfigure}
\hfill
\begin{subfigure}[b]{0.45\textwidth}
\centering
\includegraphics[width=\textwidth]{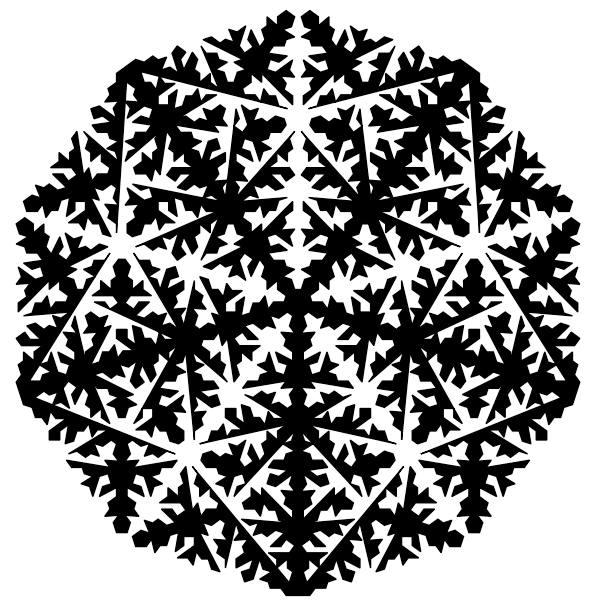}
\end{subfigure}
\caption{6th approximants of $F_{star}$ and $F_{deca}$, from left to right.}
\label{comparison_star_deca}
\end{figure}

\end{example}

\section{Substitutions to Fractal-like Relatively Dense Sets}\label{Section_fractal}


\begin{definition}
A substitution $\omega:\cP\mapsto\cP^*$ is called \emph{primitive} if there exists $k\in\ZZ^+$ such that $\omega^k(p)$ contains a copy of $q$ for every $p,q\in\cP$. 
\end{definition}

Primitive substitutions induce coverings of the plane, called \emph{tilings}, with tiles sharing at most their boundaries. The details of such construction can be found in \cite[Theorem 1.4]{Book_Sadun} or \cite[P:12-13]{Ozkaraca}. We inherit the same idea to produce fractal-like relatively dense sets in the plane.



\begin{proposition}\label{fractal_prop}
Let $\omega:\cP\mapsto\cP^*$ be a given substitution defined over a finite collection $\cP$ with an expansion factor $\lambda>1$ such that $\max\{diam(t):\ t\in\lambda^{-n}\omega^n(p),\ p\in\cP\}\downarrow 0$ as $n\to\infty$, where $diam(t)$ denotes the diameter of $\sss t$. Suppose $F:[0,1]\mapsto\sss p$ is a Lebesgue-type space filling curve generated by the method explained in Theorem \ref{main_theorem} and fills $\sss p$ for some $p\in\cP$. Assume that $F(0)=F(1)$ and $\{F_n:\ n\in\ZZ^+\}$ is the collection of approximants of $F$ $($defined on $\lambda^{-n}\omega^n(p))$ generated by filling the associated closed regions $($i.e. $F_n$'s are sets in $\RR^2)$. Assume further there exists $x\in\RR^2$ and $k\in\ZZ^+$ so that
\begin{enumerate}
    \item[(1)] $p+x \in \omega^n(p+x)$,
    \item[(2)] $\sss (p+x) \cap \bb \omega^n(p+x)=\emptyset$,
    \item[(3)] $F_1+x\subseteq \lambda^n(F_{n+1}+x)$,
    \item[(4)] $F_n$ visits two $($scaled$)$ tiles in $\lambda^{-n}\omega^n(p)$ subsequently only if they share a common edge, for every $n\in\ZZ^+$.
\end{enumerate}
Then
\begin{itemize}
    \item[i.] $F_{k.n+1}+x\subseteq \lambda^{n} (F_{(k+1)n+1}+x)$ for every $k\in\ZZ^+$. In particular,
    \[
    F_{1}+x\subseteq \lambda^n (F_{n+1}+x) \subseteq \lambda^{2n} (F_{2n+1}+x) \subseteq\lambda^{3n} (F_{3n+1}+x)\subseteq\dots
    \]
    \item[ii.] $F=\bigcup\limits_{i=0}^{\infty}\lambda^{in}(F_{in+1}+x)$ is a relatively dense set in the plane. 
    \item[iii.] There exists a collection of sets $\{A_k: k\in\ZZ^+\}$ so that \begin{itemize}
        \item[a.] $\bigcup\limits_{k}A_k=F$,
        \item[b.] $int(A_i)\cap int(A_j)=\emptyset$ whenever $i\neq j$, where $int(A_i),int(A_j)$ are interiors of $A_i$ and $A_j$,respectively,
        \item[c.] There are finite number of indices $k_1,\dots k_m\in\ZZ^+$ for some $m\in\ZZ^+$ such that for each $k\in\ZZ^+$ there exists $n_k\in\{k_1,\dots,k_m\}$ and $x_k\in\RR^2$ with $A_k=A_{n_k}+x_k$.
    \end{itemize}
\end{itemize}
\end{proposition}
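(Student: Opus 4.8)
The plan is to make item (i) the engine of the whole proposition and to extract (ii) and (iii) from the increasing chain it produces. Writing $G_k:=\lambda^{kn}(F_{kn+1}+x)$ for the fixed self-similarity level $n$ supplied by the hypotheses, the chain asserted in (i) is simply $G_0\subseteq G_1\subseteq G_2\subseteq\cdots$, and dividing the desired inclusion $G_k\subseteq G_{k+1}$ by $\lambda^{kn}$ reduces everything to the single embedding
\[
(\star)\qquad F_{kn+1}+x\subseteq\lambda^{n}\bigl(F_{(k+1)n+1}+x\bigr),\qquad k\in\NN,
\]
whose instance $k=0$ is exactly hypothesis (3). So I would first prove $(\star)$ for every $k$ and then read off (i) by scaling; no separate induction is needed, since the recurrence sits at the single level $n$.

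To establish $(\star)$ I would unfold the self-recurrence. By (1)–(2), $p+x$ is a tile of $\omega^{n}(p+x)$ lying in $\mathrm{int}\,\sss\omega^{n}(p+x)=\mathrm{int}\bigl(\lambda^{n}\sss(p+x)\bigr)$, equivalently the copy $p+(1-\lambda^{n})x$ is a cell of $\omega^{n}(p)$. Using $\omega^{(k+1)n+1}(p)=\bigcup_{s\in\omega^{n}(p)}\omega^{kn+1}(s)$ together with the hierarchical total order from the proof of Theorem \ref{main_theorem}, the part of the approximant $F_{(k+1)n+1}$ lying over the recurring cell is, after the global $\lambda^{-((k+1)n+1)}$ normalisation, precisely a $\lambda^{-n}$-scaled translate of $F_{kn+1}$; matching the translations it equals $\lambda^{-n}(F_{kn+1}+x)$. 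Thus $(\star)$ becomes the assertion that this filled sub-approximant is contained in the whole filled $F_{(k+1)n+1}+x$. Here (4) and (2) do the genuine geometric work: by (4) consecutive visited cells share an edge, so every segment of an approximant stays inside the union of two adjacent cells and the filled region over any sub-supertile is the union of the filled cells it meets; by (2) the recurring copy is interior to its supertile, so its filled sub-region is enclosed by the surrounding cells and hence sits inside the global fill. I expect this step — that filling is compatible with the sub-supertile decomposition and that interior recurrence yields genuine containment of \emph{filled} sets — to be the main obstacle of the whole proposition, as it is the only place where the geometry, rather than the combinatorial bookkeeping, is used.

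For (ii) two facts combine. First, (1)–(2) force the supports to nest and exhaust the plane: from $\sss(p+x)\subseteq\mathrm{int}\bigl(\lambda^{n}\sss(p+x)\bigr)$ one gets $0\in\mathrm{int}\,\sss(p+x)$ and $\bigcup_{k}\lambda^{kn}\sss(p+x)=\RR^{2}$, so the supports of the $G_k$ swell to all of $\RR^{2}$. Second, the gaps of $G_k$ are uniformly bounded: as $\cP$ is finite, each tile of $\omega^{kn+1}(p)$ is a translate of a prototile of diameter at most a fixed $D$, so in $\lambda^{-(kn+1)}\omega^{kn+1}(p)$ a tile has diameter at most $\lambda^{-(kn+1)}D$, and scaling $G_k$ up by $\lambda^{kn}$ turns this into the scale-free bound $\lambda^{-1}D$. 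Since the filled approximant covers its support up to one such tile, every point of $\lambda^{kn}\sss(p+x)$ lies within $\lambda^{-1}D$ of $G_k\subseteq F$. Exhaustion of the plane together with the uniform gap bound then shows that every point of $\RR^{2}$ is within the fixed radius $\lambda^{-1}D$ of $F$, i.e.\ $F$ is relatively dense.

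Finally, for (iii) I would decompose $F$ into individual filled cells rather than annuli. The same cancellation as in (ii) shows that every cell of $G_k$ is a $\lambda^{-1}$-scaled prototile (translated, and rotated by one of the orientations produced by $\omega$), hence of \emph{bounded} size independent of $k$, and that up to translation and rotation only finitely many cell shapes occur. The recurrence is decisive for alignment: by (1) the sub-supertile of $F_{(k+1)n+1}$ over the recurring copy carries exactly the resolution-$(kn+1)$ approximant, so $G_{k-1}$ is literally the cell-aligned sub-region of $G_k$ over that copy, whence $G_k\setminus G_{k-1}$ is a union of whole cells of $G_k$ with pairwise disjoint interiors. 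Taking the $A_k$ to be all of these cells (together with the finitely many cells of $G_0$) gives $\bigcup_k A_k=F$ and disjoint interiors, establishing (a) and (b). For (c) I would observe that, since the recurrence uses the tile $p$ in its own orientation, $\omega$ produces tiles in only finitely many orientations along the hierarchy, so the finitely many $\lambda^{-1}$-prototile cell shapes are matched to all later cells by translations alone, yielding the base shapes $A_{k_1},\dots,A_{k_m}$. Verifying this orientation finiteness, and the cell-alignment of consecutive $G_k$, is the delicate bookkeeping of (iii); both reduce to the explicit finite data of $\omega$ and to the recurrence in (1).
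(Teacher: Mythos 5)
Your handling of items (i) and (ii) follows the paper's own route. For (i) the paper derives the chain directly from the nesting $\{p+x\}\subseteq\omega^n(p+x)\subseteq\omega^{2n}(p+x)\subseteq\cdots$ supplied by (1), combined with (3); this is exactly the propagation you package as $(\star)$, and your explicit unfolding of the sub-supertile self-similarity fills in detail the paper leaves implicit. For (ii) the paper likewise combines exhaustion of the plane with bounded tile size: it notes that $T=\bigcup_{i}\omega^{in}(p+x)$ covers the plane by (1)--(2) (citing the standard construction from \cite{Ozkaraca}), that $F$ visits every tile of $T$, and that $\cP$ is finite; your self-contained proof of the exhaustion from $\sss(p+x)\subseteq\mathrm{int}\left(\lambda^{n}\sss(p+x)\right)$ and the explicit uniform gap bound $\lambda^{-1}D$ are a finer version of the same argument.

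Item (iii), however, contains a genuine gap. You claim that every ``cell'' of $G_k$ is a whole $\lambda^{-1}$-scaled prototile and that $G_k\setminus G_{k-1}$ is a union of whole cells with pairwise disjoint interiors, so that taking the $A_k$ to be these cells yields $\bigcup_k A_k=F$. This is false: the boundary of each filled approximant $F_m$ is the approximant polygon itself (closed by the chord joining $F(0)=F(1)$), and by construction the vertices of that polygon are the \emph{centres} $x(\cJ^j_{p,m})$ of the tiles of $\lambda^{-m}\omega^m(p)$. Hence the boundary cuts through the interiors of the tiles it meets, and $F_m$ is never a union of whole tiles of the partition --- compare the filled equithirds approximants in Figure \ref{equi_filled_iterations} and Figure \ref{equi_filled_iterations__2}, whose boundaries visibly bisect triangles. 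Consequently your $A_k$ do not have union $F$ (the partial tiles along $\partial G_k$ are missed), and the asserted ``cell-aligned'' containment of $G_{k-1}$ in $G_k$ by whole cells is unavailable. The paper sidesteps this entirely by taking the decomposition pieces to be \emph{intersections} rather than whole cells: it sets $A_t:=F\cap\sss t$ for each tile $t$ of the limit tiling $T$, so that (a) is automatic because $T$ covers the plane, (b) follows since $\mathrm{int}(A_t)\subseteq\mathrm{int}(\sss t)$ and distinct tiles have disjoint interiors, and (c) --- the only place hypothesis (4) is used --- follows from (4) together with $|\cP|<\infty$, since (4) constrains how $F$ can pass through a tile to finitely many translation classes of local patterns. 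Your repair would be to intersect $F$ with the tiles of $T$ rather than select whole cells; note also that your concern about rotations in (c) is moot in this framework, since $\cP^*$ consists of translates only, rotated tiles being listed as distinct elements of the finite set $\cP$.
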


\begin{proof}
\begin{itemize}
    \item[i.] We get
    $\{p+x\}\subseteq\omega^n(p+x)\subseteq\omega^{2n}(p+x)\subseteq\dots$, by $(1)$. So, we can conclude by $(3)$ that 
    \[F_{1}+x\subseteq \lambda^n (F_{n+1}+x) \subseteq \lambda^{2n} (F_{2n+1}+x) \subseteq\lambda^{3n} (F_{3n+1}+x)\subseteq\dots.
    \]
    \item[ii.] We have that the collection $T=\bigcup\limits_{i=1}\omega^{in}(p+x)$ is a covering of the plane by $(1)$ and $(2)$ \cite[P:12-13]{Ozkaraca}. Note that $F=\bigcup\limits_{i=0}^{\infty}\lambda^{in}(F_{in+1}+x)$ visits every tile in $T$. Thus, $F$ is relatively dense in $\RR^2$ because there are only finitely many tiles in $\cP$.
    \item[iii.] For each $t\in T$, define $A_t=F\cap\sss t$. Then $F=\bigcup\limits_{t\in T} A_t$. Furthermore, $int(A_{t_i})\cap int(A_{t_j})=\emptyset$ whenever $t_i\neq t_j$, where $int(A_{t_i}),int(A_{t_j})$ are interiors of $A_{t_i},A_{t_j}$, respectively. By $(4)$ and the fact that $|\cP|$ is finite, there are only finitely many indices $t_1,\dots,t_m$ for some $m\in\ZZ^+$ such that for each $t\in T$, there exists $s_t\in\{t_1,\dots,t_m\}$ and $x_t\in\RR^2$ with $A_t=A_{s_t}+x_t$. \qedhere
\end{itemize}
\end{proof}

\begin{remark}
The condition $c$ in the proposition is akin to the definition of fractals by Mandelbrot \cite{Mandelbrot}. In particular, condition $c$ assures that there are finite number dissections of $F$, whose collection is denoted as $\cP$ (analogue to tile set $\cP$), such that $F$ can be written as a countable union of sets, each of which is a translational copy of the defined dissections of $F$ (i.e. each of which is a congruent copy of a tile in $\cP$). 
\end{remark}

\paragraph{An Example (Equithirds):}
Consider the substitution $\omega_{eq}$ given in Figure \ref{Equithirds}. Let $p$ denote the prototile with label $B^+$. Observe that there exists $x\in\RR^2$ such that $p_{B^+}+x\in\omega_{eq}^4(p_{B^+}+x)$ and 
$\sss p_{B^+}+x\cap\bb\omega_{eq}^4(p_{B^+}+x)=\emptyset$. Let $\{F_{eq,k}^{B^+}:\ k\in\ZZ^+\}$ denote the set of approximants of $F_{eq}^{B^+}$, first 8 of which are depicted in Figure \ref{equi_filled_iterations} and Figure \ref{equi_filled_iterations__2}. Note also that 
\[
{\displaystyle F_{eq,1}^{B^+}+x\subseteq \sqrt{3}^4F_{eq,4}^{B^+}+x,}
\]
as demonstrated in Figure \ref{l_fractal_like_set_example}. Hence, 
\[
{\displaystyle F=\bigcup\limits_{i=0}^{\infty}\sqrt{3}^{4i}F_{eq,4i+1}^{B^+}+x}
\]
is a relatively dense fractal-like set in the plane, by Proposition \ref{fractal_prop}.

\begin{figure}[H]
\centering
\begin{tikzpicture}
\node[anchor=south west,inner sep=0] at (0,0) {\includegraphics[scale=0.5]{Figures/EQ_filled_5.png}};
\tri{1.62}{2.16}{0}{0.33}
\trii{1.62}{2.16}{0}{0.33}

\trii{-1.62}{6.16}{0}{0.33}

\draw[ultra thick,->] (0.5,7.3) -- (2.5,4.6);
\end{tikzpicture}
\caption{}
\label{l_fractal_like_set_example}
\end{figure}

\bigskip\noindent
{\bf Acknowledgements.}
This work was supported by the Ministry of National Education of Turkey.

\bigskip\noindent
{\bf Data Availability Statement.}
Data sharing not applicable to this article as no datasets were generated or analysed during the current study.

\Addresses

\end{document}